\newtheorem{thm}{Theorem}[section]
\newtheorem{cor}[thm]{Corollary}
\newtheorem{lem}[thm]{Lemma}
\newtheorem{prop}[thm]{Proposition}
\theoremstyle{definition}
\theoremstyle{remark}
\newtheorem{rem}[thm]{Remark}
\newtheorem{question}[thm]{Question}
\numberwithin{equation}{section}
\newcommand{\norm}[1]{\left\Vert#1\right\Vert}
\newcommand{\abs}[1]{\left\vert#1\right\vert}
\newcommand{\set}[1]{\left\{#1\right\}}
\newcommand{\dbar}{\bar\partial}
\newcommand{\ddbar}{\partial\bar\partial}
\DeclareMathOperator{\dom}{Dom}
\DeclareMathOperator{\re}{Re}
\DeclareMathOperator{\im}{Im}
\DeclareMathOperator{\dist}{dist}
\DeclareMathOperator{\Div}{div}
\DeclareMathOperator{\Ric}{Ric}
\begin{document}

\title[Sobolev Regularity for the Bergman Projection in Hermitian manifolds]{Sobolev Regularity for the Bergman Projection on Relatively Compact Domains in Hermitian manifolds}%
\author{Phillip S. Harrington}%
\address{SCEN 309, 1 University of Arkansas, Fayetteville, AR 72701}%
\email{psharrin@uark.edu}%

%\thanks{The author is partially supported by NSF grant DMS-1002332}
\subjclass[2010]{32U10, 32T35, 32Q28}
\keywords{Diederich-Forn{\ae}ss Index, Bergman Projection, Hermitian manifolds}%

%\date{}%
%\dedicatory{Dedicated to the memory of J. J. Kohn}%
%\commby{}%
% ----------------------------------------------------------------
\begin{abstract}
  Generalizing a result of Berndtsson and Charpentier, we provide sufficient conditions for $L^2$ Sobolev regularity of the Bergman projection acting on $L^2$ sections of a holomorphic line bundle restricted to a relatively compact domain with Lipschitz boundary in a Hermitian manifold.  We provide examples to show that our methods work for domains in Hopf manifolds endowed with a suitable Hermitian metric.
\end{abstract}
\maketitle

%\tableofcontents

% ----------------------------------------------------------------

\section{Introduction}

Let $M$ be a complex manifold of dimension $n\geq 2$, and equip the holomorphic tangent bundle $T^{1,0}(M)$ with a Hermitian metric.  Let $\Omega\subset M$ be a relatively compact domain with Lipschitz boundary.  Let $L\rightarrow M$ be a holomorphic line bundle equipped with a Hermitian metric.  Let $L^2(\Omega,L)$ denote the space of $L^2$ holomorphic sections from $\Omega$ to $L$, and let $P$ denote the Bergman projection on $L^2(\Omega,L)$, i.e., the orthogonal projection onto the subspace of $L^2$ holomorphic sections of $L$ over $\Omega$ (see \cite{Szo22} or \cite{GGV22} for recent work on the Bergman space in this setting).  Our primary question in this paper is the following:
\begin{question}
\label{q:key_question}
  Given $s>0$, what is a sufficient condition on $\Omega$ and the Hermitian metrics on $M$ and $L$ for the Bergman projection to be continuous in the $L^2$ Sobolev space $W^s(\Omega,L)$?
\end{question}
Our long term goal is to understand when Question \ref{q:key_question} has a positive answer for all $s>0$ on a domain $\Omega$ with smooth boundary.  For the present paper, we follow Berndtsson and Charpentier \cite{BeCh00} in considering regularity results for smaller values of $s$ when the boundary is merely Lipschitz.  To aid in the exposition, we will focus on the special case in which $L$ is the trivial line bundle with a Hermitian metric represented by a weight function $\psi$.  More precisely, for $\psi\in C^2(M)$, we define the weighted $L^2$ inner product by
\[
  \left<f,g\right>_{L^2(\Omega,\psi)}=\int_\Omega\left<f,g\right>e^{-\psi}dV,
\]
with associated norm $\norm{f}_{L^2(\Omega,\psi)}$.  Note that $\norm{f}_{L^2(\Omega,\psi)}$ is comparable to $\norm{f}_{L^2(\Omega)}$ under our hypotheses.  Let $P_\psi$ denote the orthogonal projection onto the space of $L^2$ holomorphic functions with respect to the weighted inner product, i.e., the Bergman projection weighted by $\psi$.  In this setting, our question becomes:
\begin{question}
\label{q:key_question_simplified}
  Given $s>0$, what is a sufficient condition on $\Omega$, the Hermitian metric on $M$, and $\psi\in C^2(M)$ for the weighted Bergman projection $P_\psi$ to be continuous in the $L^2$ Sobolev space $W^s(\Omega)$?
\end{question}

Our starting point is Kohn's formula relating the Bergman projection to the $L^2$-minimal solution to the inhomogeneous Cauchy-Riemann equations.  If $S:L^2_{0,1}(\Omega)\cap\ker\dbar\rightarrow L^2(\Omega)$ is the $L^2(\Omega)$-minimal solution operator to the inhomogenous Cauchy-Riemann equations (also known as Kohn's solution), then the Bergman projection satisfies $P=I-S\dbar$.  Kohn's groundbreaking work in \cite{Koh63,Koh64} established the existence and regularity of $S$ when the boundary of $\Omega$ is strictly pseudoconvex.  Because of this, it has long been known that the Bergman projection is regular in $W^s(\Omega)$ for all $s\in\mathbb{R}$ when the boundary of $\Omega$ is strictly pseudoconvex.  We note that Kohn and many of the authors cited below focus on the $\dbar$-Neumann problem.  As shown by Boas and Straube in \cite{BoSt90}, regularity for the Bergman projection and regularity for the $\dbar$-Neumann problem (in the appropriate degrees) are equivalent for smoothly bounded domains in $\mathbb{C}^n$.  This equivalence may not hold in general complex manifolds, and we will see examples of domains in which we can prove regularity for the Bergman projection without regularity for the $\dbar$-Neumann problem.  In particular, we will see that we do not need a solution operator $S$ for all inhomogeneous Cauchy-Riemann equations, but only for the special equation given in \eqref{eq:solution_identity} below.  This will allow us to estimate the Bergman projection on domains which are not Stein.

When the boundary of $\Omega$ is not strictly pseudoconvex, more information is needed.  Indeed, Barrett proved in \cite{Bar92} that for any $s>0$, there exists a smoothly bounded pseudoconvex domain in $\mathbb{C}^2$ on which the Bergman projection is not continuous in $W^s(\Omega)$.  These are the famous worm domains of Diederich and Forn{\ae}ss \cite{DiFo77a}.  When $M=\mathbb{C}^n$, many sufficient conditions for regularity of the Bergman Projection have been studied.  In \cite{BoSt91}, Boas and Straube proved that the Bergman projection is continuous in $W^s(\Omega)$ for all $s\in\mathbb{R}$ whenever $\Omega\subset\mathbb{C}^n$ admits a smooth defining function that is strictly plurisubharmonic on the boundary of $\Omega$.  Building on this idea, Kohn found a way in \cite{Koh99} to quantify regularity of the Bergman projection using the Diederich-Forn{\ae}ss index.  In \cite{DiFo77b}, Diederich and Forn{\ae}ss proved that for every bounded pseudoconvex domain in $\mathbb{C}^n$ with $C^2$ boundary, there exists a number $0<\eta<1$ and a defining function $\rho_\eta$ for $\Omega$ such that $-(-\rho_\eta)^\eta$ is strictly plurisubharmonic on $\Omega$.  The Diederich-Forn{\ae}ss index, which we will denote $DF(\Omega)$, is the supremum of all such $\eta$ for a given domain $\Omega$.  Kohn found a lower bound for the value of $s$ for which the Bergman projection is continuous in $W^s(\Omega)$ using $DF(\Omega)$ and certain regularity properties of the defining functions $\rho_\eta$ \cite{Koh99}.  Other authors have sharpened Kohn's results in \cite{Har11}, \cite{PiZa14}, \cite{Liu22}, and \cite{LiSt22}.  We highlight the work of Liu and Straube in \cite{LiSt22}, in which they prove that the Bergman projection is continuous in $W^s(\Omega)$ for all $s\in\mathbb{R}$ whenever $\Omega\subset\mathbb{C}^2$ is a smoothly bounded pseudoconvex domain and $DF(\Omega)=1$.  Note that this is a generalization of Boas and Straube's result in \cite{BoSt91} when $n=2$ by a result of Gallagher and Forn{\ae}ss \cite{HeFo07}.

On domains in $\mathbb{C}^n$ for which the boundary is only Lipschitz, Berndtsson and Charpentier proved that the Bergman projection is regular in $W^s(\Omega)$ whenever $0<s<\frac{1}{2}DF(\Omega)$.  Since the author proved that $DF(\Omega)>0$ whenever $\Omega\subset\mathbb{C}^n$ is a bounded pseudoconvex domain with Lipschitz boundary \cite{Har08a}, the Berndtsson and Charpentier result can be used to prove that the Bergman projection always admits some regularity in Sobolev spaces on bounded pseudoconvex domains in $\mathbb{C}^n$ with Lipschitz boundaries.  Our goal in the present paper is to see what the methods of Berndtsson and Charpentier reveal about pseudoconvex domains in more general complex manifolds.  We note that Kohn's original work in \cite{Koh63,Koh64} applies to strictly pseudoconvex domains in complex manifolds, without the assumption of a K\"ahler metric.  Clearly some hypotheses about the curvature of the ambient manifold must be introduced, but we will see that these methods work on domains which are not Stein, so we can obtain regularity for the Bergman projection on domains for which the inhomogeneous Cauchy-Riemann equations are ill-posed.

The curvature terms arising in the $L^2$ theory for a K\"ahler manifold are well-known from the celebrated Bochner-Kodaira-Nakano identity.  On non-K\"ahler manifolds, the necessary generalizations were initially computed by Griffiths in \cite{Gri66} and developed by Demailly in \cite{Dem86} (see \cite{Dem12} for a detailed exposition).  One of our goals for the present paper is to demonstrate that in the crucial bottom-degree case, the necessary curvature terms are still quite easy to compute and relate to known quantities.  First, we will need the complex Hessian of the weight function $\psi$, which should be thought of as the curvature of the trivial line bundle under the metric induced by $\psi$ (see \eqref{eq:line_bundle_curvature_computation} below).  Next, we need the torsion of the Chern connection for $T^{1,0}(M)$, which can be easily computed using the exterior derivative of the K\"ahler form (see \eqref{eq:Kahler_differential} below).  Finally, we need a curvature term which can be computed with the same formula used to compute the Ricci tensor of a K\"ahler manifold (see \eqref{eq:ricci_computation} below), and which represents a scalar multiple of the first Chern class of $T^{1,0}(M)$.  To consider the Bergman projection in the space $L^2_{p,q}(\Omega,\psi)$ for $q\geq 1$ or $p\geq 1$ with a non-K\"ahler metric, the curvature terms arising in the $L^2$ estimates would be significantly more complicated, and something closer to the methods of \cite{Gri66} or \cite{Dem86} would be necessary (although there are also significant simplifications when $p=n$; see (3.2) in \cite{Dem86}).  For example, compare Th\'{e}or\`{e}me 2.12 and (3.1) in \cite{Dem86} or Theorem VII-1.4 and VII-(2.1) in \cite{Dem12}, in which we see that the torsion operator $T_\omega$ arising in the $L^2$ estimates depends on $\ddbar\omega$ and commutation relations involving a torsion operator and its adjoint.

To state our results precisely, we need to fix some notation.  We denote the Hermitian metric on $T^{1,0}(M)$ by $\left<\cdot,\cdot\right>$, and the K\"ahler form by $\omega$, i.e., $\omega(Z,\bar W)=\frac{i}{2}\left<Z,\bar W\right>$ for all $Z,W\in T^{1,0}(M)$.  Let $\nabla$ denote the Chern connection on $T^{1,0}(M)$ with respect to the Hermitian metric.  We also use $\nabla$ to denote the unique affine connection on $T(M)$ induced by the Chern connection.  Since $M$ is not necessarily K\"ahler, we define the torsion tensor
\[
  T_\nabla(X,Y)=\nabla_X Y-\nabla_Y X-[X,Y]
\]
for all $X,Y\in T(M)$.  We also define the usual curvature tensor
\[
  R_\nabla(X,Y)=\nabla_X\nabla_Y-\nabla_Y\nabla_X-\nabla_{[X,Y]}
\]
for all $X,Y\in T(M)$.  We extend these tensors to the complexified tangent bundle so that they are $\mathbb{C}$-linear in $X$ and $Y$.  For any $U\subset M$, if $\{W_j\}_{1\leq j\leq n}$ is a basis for $T^{1,0}(U)$ and $\{\theta^j\}_{1\leq j\leq n}$ is the dual basis for $\Lambda^{1,0}(U)$, then we define our key curvature term by
\[
  \Theta_M(Z_1,\bar Z_2):=\sum_{j=1}^n\theta^j\left(R_{\nabla}(Z_1,\bar Z_2)W_j\right).
\]
for any $Z_1,Z_2\in T^{1,0}(U)$.  This definition is independent of the choice of basis for $T^{1,0}(U)$.  When $M$ is K\"ahler, it is not difficult to check that this agrees with the Ricci tensor $\Ric_\nabla$ using the first Bianchi identity for K\"ahler manifolds.  Note that $\Theta_M$ is the trace of the curvature form, so it represents a scalar multiple of the first Chern class for $T^{1,0}(M)$.

To handle the torsion, we define an operator $\tau_\nabla:\Lambda^q(M)\rightarrow\Lambda^{q+1}(M)$ by setting
\begin{equation}
\label{eq:tau_defn}
  (\tau_\nabla\phi)(X,Y)=\phi(T_\nabla(X,Y))
\end{equation}
for all $\phi\in\Lambda^1(M)$ and $X,Y\in T(M)$, and extending $\tau_\nabla$ as a derivation to $\Lambda^q(M)$ for $q>1$.  Since $T_\nabla$ is skew-symmetric, $\tau_\nabla\phi$ is a well-defined $2$-form.  Because $\nabla$ is the Chern connection, we will see that $\tau_\nabla$ maps $\Lambda^{1,0}(M)$ to $\Lambda^{2,0}(M)$ (this is a consequence of \eqref{eq:torsion_vanishing} below).  We will often compose $\tau_\nabla$ with the musical isomorphism: for any $Z\in T^{1,0}(M)$, we define $Z^\flat\in\Lambda^{1,0}(M)$ by $Z^\flat(W)=\left<W,Z\right>$ for all $W\in T^{1,0}(M)$, and hence
\[
  (\tau_\nabla Z^\flat)(W_1,W_2)=\left<T_\nabla(W_1,W_2),Z\right>\text{ for all }Z,W_1,W_2\in T^{1,0}(M).
\]

Now we may state our key result:
\begin{thm}
\label{thm:DF_psh}
  Let $M$ be a Hermitian manifold of dimension $n\geq 2$ and let $\Omega\subset M$ be a relatively compact domain with Lipschitz boundary.  Let $\psi\in C^2(M)$.  Suppose that for some $0\leq a<b\leq 1$, there exists a Lipschitz defining function $\rho$ for $\Omega$ such that $\rho|_\Omega\in C^2(\Omega)$ and for every $Z\in T^{1,0}(\Omega)$
  \begin{equation}
  \label{eq:DF_psh}
    \left((-\rho)^\eta\left(\Theta_M+\ddbar\psi\right)-\ddbar(-\rho)^\eta\right)(Z,\bar Z)\geq(-\rho)^\eta\abs{\tau_\nabla Z^\flat}^2
  \end{equation}
  on $\Omega$ for $\eta=a$ and $\eta=b$.  Then the Bergman projection $P_\psi$ is continuous in $W^s(\Omega)$ for all $0\leq s<\frac{b}{2}$.
\end{thm}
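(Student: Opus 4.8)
The plan is to adapt the method of Berndtsson and Charpentier \cite{BeCh00}: reduce the Sobolev bound to a weighted $L^2$ estimate, and then feed \eqref{eq:DF_psh} into the Bochner--Kodaira--Nakano machinery for Hermitian manifolds. First, by a standard mollification argument it suffices to prove $\norm{P_\psi u}_{W^s(\Omega)}\lesssim\norm{u}_{W^s(\Omega)}$ for $u\in C^\infty(\overline\Omega)$. Since $P_\psi u$ is holomorphic on $\Omega$ and $0\le s<\tfrac b2\le\tfrac12$, I would use two Hardy-type inequalities: the classical Hardy inequality $\int_\Omega\abs g^2(-\rho)^{-2s}\,dV\lesssim\norm g_{W^s(\Omega)}^2$, valid for every $g\in W^s(\Omega)$ when $s<\tfrac12$, and its converse for holomorphic functions, $\norm f_{W^s(\Omega)}^2\lesssim\norm f_{L^2(\Omega)}^2+\int_\Omega\abs f^2(-\rho)^{-2s}\,dV$, which follows from interior Cauchy estimates and the characterization $\norm f_{W^s(\Omega)}^2\approx\norm f_{L^2(\Omega)}^2+\int_\Omega\abs{\nabla f}^2(-\rho)^{2-2s}\,dV$ of the fractional Sobolev norm on a Lipschitz domain. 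Together these reduce the theorem to showing that $P_\psi$ is bounded on the weighted space $L^2(\Omega,(-\rho)^{-2s}e^{-\psi})$ for every $0\le s<\tfrac b2$.

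The key observation is that, dividing \eqref{eq:DF_psh} by $(-\rho)^\eta$ and using $(-\rho)^{-\eta}\ddbar(-\rho)^\eta=\eta\,\ddbar\log(-\rho)+\eta^2\,\partial\log(-\rho)\wedge\dbar\log(-\rho)$, the condition becomes
\[
  \bigl(\Theta_M+\ddbar\varphi_\eta\bigr)(Z,\bar Z)\ \ge\ \abs{\tau_\nabla Z^\flat}^2+\eta^2(-\rho)^{-2}\abs{\partial\rho(Z)}^2,\qquad \varphi_\eta:=\psi-\eta\log(-\rho);
\]
that is, exactly the curvature positivity needed to run the $L^2$ estimates for the weight $\varphi_\eta$, plus a surplus of positivity in the conormal direction. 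For $\eta>0$ the weight $e^{-\varphi_\eta}=e^{-\psi}(-\rho)^\eta$ vanishes on $\partial\Omega$, so the boundary integral in the Morrey--Kohn--H\"ormander formula drops out (for $\eta=0$ it is nonnegative, because \eqref{eq:DF_psh} at the endpoint $\eta=b>0$ forces $\Omega$ to be pseudoconvex), and the Bochner--Kodaira--Nakano--Demailly identity for $(0,1)$-forms on $M$ --- in which the torsion enters precisely through $\tau_\nabla$ --- should yield solvability of the special equation \eqref{eq:solution_identity}: for a $\dbar$-closed $(0,1)$-form of conormal type $g=g_0\,\dbar\rho$ there is $v$ with $\dbar v=g$ and $\int_\Omega\abs v^2e^{-\varphi_\eta}\,dV\lesssim\eta^{-2}\int_\Omega(-\rho)^2\abs{g_0}^2e^{-\varphi_\eta}\,dV$. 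The point is that the only data one ever meets are conormal --- they are $\dbar$ of $(-\rho)^{c}$ times a holomorphic function --- so no solution operator for general inhomogeneous Cauchy--Riemann data is needed, and the argument should survive on a $\Omega$ that is not Stein.

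With this in hand I would use Kohn's identity $P_\psi=I-S\dbar$ from the introduction to turn solvability of \eqref{eq:solution_identity} into the mapping property. Writing $P_\psi u=u-(I-P_\psi)u$, using that $(I-P_\psi)u$ is the $\psi$-minimal solution of $\dbar(\,\cdot\,)=\dbar u$, and transferring the weight $(-\rho)^{-2s}$ onto the right-hand side by pairing against holomorphic functions, one is left with $\dbar$-problems of conormal type $\dbar\bigl((-\rho)^{-c}h\bigr)=c\,(-\rho)^{-c-1}h\,\dbar\rho$ with $h$ holomorphic, to which the estimate of the previous step applies; following \cite{BeCh00}, this should produce, for each of $\eta=a$ and $\eta=b$, a weighted $L^2$ bound for $P_\psi$. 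Interpolating these two bounds together with the trivial $L^2$-boundedness of $P_\psi$ then gives boundedness on $L^2(\Omega,(-\rho)^{-2s}e^{-\psi})$ for the open range $0\le s<\tfrac b2$, and the first step finishes the proof.

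I expect the main obstacle to be the last step: closing the weighted estimate without circularity, which is the delicate part of the Berndtsson--Charpentier scheme. Adapting it here requires uniform control of the non-K\"ahler terms $\Theta_M$ and $\tau_\nabla$ as $\eta$ ranges over $[a,b]$, so that the surplus conormal positivity in the displayed inequality above is genuinely available at every scale, and it requires the interpolation of the weighted $L^2$ spaces --- this is where both hypotheses (at $\eta=a$ and at $\eta=b$) are used, and where the strict inequality $s<\tfrac b2$, inherited from the openness of the interpolation and from the restriction $s<\tfrac12$ in the Hardy inequalities, comes from. A more routine point will be justifying the reduction in the first step up to a merely Lipschitz boundary, which should follow from the interior regularity of $P_\psi u$ and the standard theory of fractional Sobolev spaces on Lipschitz domains.
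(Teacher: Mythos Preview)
Your overall plan matches the paper's architecture: reduce Sobolev regularity to a weighted $L^2$ bound via the Hardy-type inequalities \eqref{eq:weighted_norm_bounded_by_Sobolev_norm} and \eqref{eq:Sobolev_norm_bounded_by_weighted_norm}, feed the hypothesis into a twisted Bochner--Kodaira--Nakano--Morrey--Kohn--H\"ormander identity on the Hermitian manifold (Proposition~\ref{prop:twisted_BKMKH}), use the resulting positivity to solve the special conormal equation \eqref{eq:solution_identity} (Proposition~\ref{prop:solution_operator}), and close with a Boas--Straube-type identity for $P_\psi$ (Proposition~\ref{prop:Bergman_projection}). So far so good.

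The one genuine gap is in how you combine the two endpoint hypotheses. You propose to obtain a weighted $L^2$ bound for $P_\psi$ at $\eta=a$ and at $\eta=b$ separately and then interpolate the operator bounds. But the Berndtsson--Charpentier machinery does not yield a finite constant at the endpoints. In your displayed inequality the ``surplus'' $\eta^2(-\rho)^{-2}\abs{\partial\rho(Z)}^2$ is exactly the gradient term that is absorbed when one passes to the twist $\kappa=(-\rho)^\eta$; comparing \eqref{eq:DF_psh} with \eqref{eq:DF_psh_B}, it corresponds to $B=0$, and the bound $\frac{\sqrt{1+B}}{\sqrt{1+B}-1}$ in \eqref{eq:Bergman_projection_weighted_estimate} blows up. (This is also why the original result in \cite{BeCh00} gives $s<\tfrac{1}{2}DF(\Omega)$, strictly.) So there is no endpoint bound to interpolate from, and your account of where the strict inequality $s<\tfrac{b}{2}$ comes from is inconsistent: if you really had bounds at $\eta=a$ and $\eta=b$, Stein--Weiss interpolation would give the closed range.

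The paper avoids this by interpolating at the level of the \emph{pointwise} curvature inequality rather than at the level of the operator. The quantity $(-\rho)^{-\eta}$ times the left side of \eqref{eq:DF_psh} is affine in $\eta$ except for the gradient term $\eta(1-\eta)(-\rho)^{-2}\abs{\partial\rho(Z)}^2$ coming from \eqref{eq:ddbar_sigma}; taking the convex combination \eqref{eq:Psi_interpolation} of the $\eta=a$ and $\eta=b$ hypotheses therefore produces \eqref{eq:DF_psh_B} at any intermediate $\eta=2s\in(a,b)$ with the explicit strict surplus $B=(2s)^{-2}(b-2s)(2s-a)>0$. One then runs the machinery \emph{once}, at this $\eta$, via Lemma~\ref{lem:DF_psh}. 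This is where both endpoint hypotheses are actually used, and it explains the otherwise mysterious role of $a$ (which does not appear in the conclusion): having \eqref{eq:DF_psh} on an \emph{interval} is precisely what manufactures the positive $B$. Replace your operator interpolation by this algebraic interpolation and the rest of your outline goes through.
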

We note that the value of $a$ does not play a role in the conclusion of Theorem \ref{thm:DF_psh}, and in many of our examples we will take $a=0$.  Its relevance for Theorem \ref{thm:DF_psh} lies in requiring that \eqref{eq:DF_psh} holds for $\eta$ in some open interval, and not only at an isolated point.

When the boundary of $\Omega$ is also $C^2$, we may simply state that $\rho$ is a $C^2$ defining function for $\Omega$.  However, we will see in Section \ref{sec:C2_example} that we can construct domains with Lipschitz boundaries satisfying the hypotheses of Theorem \ref{thm:DF_psh}.  We note that it is dangerous to assume that a Lipschitz defining function can always be regularized to obtain a function which is $C^2$ in the interior without sacrificing \eqref{eq:DF_psh}.  Our example in Section \ref{sec:Example} is foliated by compact complex submanifolds, so any plurisubharmonic function on this domain must be constant on each such submanifold.  This means that any regularizing procedure must take this global information into account, so merely integrating against a locally defined mollifier in order to regularize is perilous.

If $M=\mathbb{C}^n$ with the Euclidean metric and $\psi\equiv 0$, then $\tau_\nabla\equiv 0$ and $\Theta_M\equiv 0$, so \eqref{eq:DF_psh} holds when $\eta=0$.  If $-(-\rho)^b$ is plurisubharmonic for some $0<b\leq 1$, then \eqref{eq:DF_psh} will also hold at $\eta=b$.  Hence, we may recover the result of \cite{BeCh00}.  If $M=\mathbb{CP}^n$ with the Fubini-Study metric and $\psi\equiv 0$, then $\Theta_M=\Ric_{\nabla}$ is positive definite, so by the same reasoning we may recover the result of \cite{CSW04}.  When $M$ is Stein, we may set $\psi$ equal to a multiple of a smooth, strictly plurisubharmonic exhaustion function for $M$ to construct weighted Bergman projections with improved regularity, as observed by Kohn in \cite{Koh73}.

We will see in Section \ref{sec:hopf_manifolds} that a Hopf manifold can be constructed with a Hermitian metric in which \eqref{eq:DF_psh} holds when $\eta=0$, thus generalizing the result of Berndtsson and Charpentier to domains in a non-K\"ahler manifold, provided that $DF(\Omega)>0$ on those domains.  On the other hand, we will show in Proposition \ref{prop:Kahler_optimal} that on any K\"ahler manifold, optimal values for $\eta$ in \eqref{eq:DF_psh} are obtained with a K\"ahler metric and a suitable weight function $\psi$.  Hence, there are no advantages to using a non-K\"ahler metric on a K\"ahler manifold, but there are Hermitian manifolds which do not admit a K\"ahler metric on which the hypotheses of Theorem \ref{thm:DF_psh} are satisfied.

We can easily generalize our previous results to the context of Question \ref{q:key_question}.  Let $L\rightarrow M$ be a holomorphic line bundle over $M$ with a metric denoted by $\left<\cdot,\cdot\right>_L$.  Let $\nabla^L$ denote the Chern connection on $L$.  For a non-trivial section $\mathcal{S}$ of $L$, we define
\[
  \Theta_L(Z_1,\bar Z_2):=\abs{\mathcal{S}}^{-2}_L\left<R_{\nabla^L}(Z_1,\bar Z_2)\mathcal{S},\mathcal{S}\right>_L
\]
for any $Z_1,Z_2\in T^{1,0}(M)$.  This is clearly independent of the choice of section.  Theorem \ref{thm:DF_psh} generalizes to:
\begin{thm}
\label{thm:DF_psh_line_bundle}
  Let $M$ be a Hermitian manifold of dimension $n\geq 2$ and let $\Omega\subset M$ be a relatively compact domain with Lipschitz boundary.  Let $L\rightarrow M$ be a holomorphic line bundle equipped with a $C^2$ Hermitian metric.  Suppose that for some $0\leq a<b\leq 1$, there exists a Lipschitz defining function $\rho$ for $\Omega$ such that $\rho|_\Omega\in C^2(\Omega)$ and for every $Z\in T^{1,0}(\Omega)$
  \begin{equation}
  \label{eq:DF_psh_line_bundle}
    \left((-\rho)^\eta\left(\Theta_M+\Theta_L\right)-\ddbar(-\rho)^\eta\right)(Z,\bar Z)\geq(-\rho)^\eta\abs{\tau_\nabla Z^\flat}^2
  \end{equation}
  on $\Omega$ for $\eta=a$ and $\eta=b$.  Then the Bergman projection is continuous in $W^s(\Omega,L)$ for all $0\leq s<\frac{b}{2}$.
\end{thm}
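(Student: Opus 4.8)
The plan is to follow the proof of Theorem~\ref{thm:DF_psh} line by line, with the Hermitian metric $\langle\cdot,\cdot\rangle_L$ on $L$ playing the role formerly played by the weight factor $e^{-\psi}$ and the Chern curvature $\Theta_L$ playing the role of $\ddbar\psi$. The decisive observation is that every ingredient of that proof is either purely local --- where a holomorphic frame for $L$ reduces it to the weighted function case --- or else makes sense verbatim for sections of an arbitrary holomorphic line bundle. Indeed, near any point of $\overline{\Omega}$ fix a holomorphic frame $e$ for $L$ and write $|e|_L^2=e^{-\varphi}$ with $\varphi\in C^2$; a section $f$ corresponds to a function $\tilde f$ via $f=\tilde f\,e$, with $|f|_L^2=|\tilde f|^2e^{-\varphi}$ and $\Theta_L=\ddbar\varphi$ in this frame. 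Thus, in each such frame, the $\dbar$-complex with values in $L$ is literally the $\dbar$-complex weighted by $\varphi$, the Bochner--Kodaira--Nakano-type identity in the Hermitian setting produces exactly the curvature term $\Theta_M+\ddbar\varphi$ together with the torsion term $|\tau_\nabla Z^\flat|^2$, and --- crucially --- $\tau_\nabla$, being built solely from the torsion of the Chern connection of $T^{1,0}(M)$, is untouched by $L$. Reassembling the local models, the global curvature operator appearing in the estimates is $\Theta_M+\Theta_L$, which is precisely what is controlled by \eqref{eq:DF_psh_line_bundle}.

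With this dictionary, one carries out the same three steps as in the proof of Theorem~\ref{thm:DF_psh}. First, establish the twisted/weighted a priori $L^2$ estimate on $\Omega$ for $L$-valued $(0,1)$-forms: using \eqref{eq:DF_psh_line_bundle} at $\eta=a$ and $\eta=b$, the auxiliary weights built from powers of $(-\rho)$ make the Bochner--Kodaira identity coercive modulo the torsion term (the logarithmic $\ddbar$ of these weights simply adds to $\Theta_M+\Theta_L$, exactly as in the function case, while the torsion term is absorbed by the right-hand side of \eqref{eq:DF_psh_line_bundle}), and the fact that $\partial\Omega$ is only Lipschitz is handled, as before, by a limiting argument through smoother subdomains or by regularizing $\rho$. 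Second, convert this estimate into the needed bounds on the $L^2(\Omega,L)$-minimal solution operator to $\dbar$ (applied, as in Theorem~\ref{thm:DF_psh}, only to data of the form $\dbar u$) and on its behaviour in the weighted spaces that encode the Sobolev norm $W^s(\Omega,L)$ for $0\le s<\frac b2$; the characterization of $W^s$ of holomorphic sections near a Lipschitz boundary in terms of $(-\rho)^{-2s}$-weighted $L^2$ norms is identical to the function case once one works in a local frame. Third, invoke Kohn's formula $P=I-S\dbar$, which holds verbatim for $L$-valued forms, to transfer the weighted estimates from $S$ to the Bergman projection $P$ via the Berndtsson--Charpentier argument, yielding continuity of $P$ in $W^s(\Omega,L)$ for all $0\le s<\frac b2$.

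The only point requiring genuine care --- bookkeeping rather than a new obstacle --- is to confirm that the global constructions (the minimal solution operator, the twisted a priori inequality, and any partition-of-unity patching used to pass from the pointwise Bochner--Kodaira identity to a global estimate over the Lipschitz domain) survive the passage to an arbitrary $L$. They do precisely because $L$ is a line bundle: on overlaps the holomorphic frames differ by nonvanishing holomorphic functions, so the local weighted models are mutually compatible up to factors bounded above and below on $\overline{\Omega}$, and the density and regularization arguments for forms up to a Lipschitz boundary used in the proof of Theorem~\ref{thm:DF_psh} are insensitive to twisting by $L$. Since no step of that proof used triviality of the bundle beyond what a local holomorphic frame provides, the argument closes; in particular Theorem~\ref{thm:DF_psh} is recovered as the case $L=M\times\mathbb C$ with metric $e^{-\psi}$, for which $\Theta_L=\ddbar\psi$.
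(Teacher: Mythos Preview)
Your proposal is correct and matches the paper's own treatment: the paper explicitly omits the proof, noting only that it is a trivial modification of the proof of Theorem~\ref{thm:DF_psh} in which the local computations of Proposition~\ref{prop:twisted_BKMKH} are carried out in a local holomorphic trivialization of $L$ and $\ddbar\psi$ is replaced by $\Theta_L$ via \eqref{eq:line_bundle_curvature_computation}. Your dictionary and three-step outline are exactly this reduction.
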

%Furthermore, Theorem \ref{thm:DF_strict_psh} generalizes to
%\begin{thm}
%\label{thm:DF_strict_psh_line_bundle}
%  Let $M$ be a Hermitian manifold of dimension $n\geq 2$ and let $\Omega\subset M$ be a relatively compact domain with Lipschitz boundary.  Let $L\rightarrow M$ be a holomorphic line bundle equipped with a $C^2$ Hermitian metric satisfying
%  \begin{equation}
%  \label{eq:positivity_condition_line_bundle}
%    \left(\Theta_M+\Theta_L\right)(Z,\bar Z)\geq\abs{\tau_\nabla Z^\flat}^2
%  \end{equation}
%  on $\Omega$ for all $Z\in T^{1,0}(\Omega)$.  Suppose that for some $0<\eta\leq 1$, there exists a continuous function $\rho$ that is comparable to $-\dist(\cdot,\partial\Omega)$ on $\Omega$ such that for every relatively compact set $U\subset\Omega$ there exists a constant $A_U>0$ satisfying
%  \begin{equation}
%  \label{eq:DF_strict_psh_line_bundle}
%    \left((-\rho)^\eta\left(\Theta_M+\Theta_L\right)-\ddbar(-\rho)^\eta\right)(Z,\bar Z)\geq(-\rho)^\eta\abs{\tau_\nabla Z^\flat}^2+A_U\abs{Z}^2
%  \end{equation}
%  in the distribution sense on $U$ for every $Z\in T^{1,0}(U)$.  Then the Bergman projection is continuous in $W^s(\Omega,L)$ for all $0\leq s<\frac{\eta}{2}$.
%\end{thm}
The proof of Theorem \ref{thm:DF_psh_line_bundle} is a trivial modification of the proofs of Theorem \ref{thm:DF_psh}, so we omit the details.  The key observation is that we may treat the local computations in the proof of Proposition \ref{prop:twisted_BKMKH} as computations on a local trivialization of an arbitrary holomorphic line bundle.  Furthermore, \eqref{eq:line_bundle_curvature_computation} below justifies replacing $\ddbar\psi$ with $\Theta_L$.

As noted in the discussion of Theorem \ref{thm:DF_psh}, it is dangerous to assume that a Lipschitz function satisfying \eqref{eq:DF_psh} can be regularized.  This, for example, is why the standard regularizing result of Richberg \cite{Ric68} is stated for strictly plurisubharmonic functions.  It is not clear that Richberg's methods allow us to simultaneously regularize both $-(-\rho)^a$ and $-(-\rho)^b$ so that \eqref{eq:DF_psh} holds for both $\eta=a$ and $\eta=b$, so we will assume that $a=0$ in this case.  If we adapt Richberg's techniques, we obtain the following:
\begin{thm}
\label{thm:DF_strict_psh_line_bundle}
  Let $M$ be a Hermitian manifold of dimension $n\geq 2$ and let $\Omega\subset M$ be a relatively compact domain with Lipschitz boundary.  Let $L\rightarrow M$ be a holomorphic line bundle equipped with a $C^2$ Hermitian metric satisfying
  \begin{equation}
  \label{eq:positivity_condition_line_bundle}
    \left(\Theta_M+\Theta_L\right)(Z,\bar Z)\geq\abs{\tau_\nabla Z^\flat}^2
  \end{equation}
  on $\Omega$ for all $Z\in T^{1,0}(\Omega)$.  Suppose that for some $0<\eta\leq 1$, there exists a continuous function $\rho$ that is comparable to $-\dist(\cdot,\partial\Omega)$ on $\Omega$ such that for every relatively compact set $U\subset\Omega$ there exists a constant $A_U>0$ satisfying
  \begin{equation}
  \label{eq:DF_strict_psh_line_bundle}
    \left((-\rho)^\eta\left(\Theta_M+\Theta_L\right)-\ddbar(-\rho)^\eta\right)(Z,\bar Z)\geq(-\rho)^\eta\abs{\tau_\nabla Z^\flat}^2+A_U\abs{Z}^2
  \end{equation}
  in the distribution sense on $U$ for every $Z\in T^{1,0}(U)$.  Then the Bergman projection is continuous in $W^s(\Omega,L)$ for all $0\leq s<\frac{\eta}{2}$.
\end{thm}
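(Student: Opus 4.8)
The plan is to reduce Theorem~\ref{thm:DF_strict_psh_line_bundle} to Theorem~\ref{thm:DF_psh_line_bundle} by a Richberg-type regularization that replaces the merely continuous $\rho$ with a function that is smooth on $\Omega$. The first observation is that \eqref{eq:positivity_condition_line_bundle} is exactly \eqref{eq:DF_psh_line_bundle} evaluated at the exponent $\eta=0$, since $(-\rho)^0\equiv 1$ and $\ddbar1=0$; thus the ``$a=0$'' hypothesis of Theorem~\ref{thm:DF_psh_line_bundle} needs no regularization. It therefore suffices to produce a Lipschitz defining function $\tilde\rho$ for $\Omega$ that is smooth on $\Omega$ and comparable to $-\dist(\cdot,\partial\Omega)$, for which \eqref{eq:DF_psh_line_bundle} holds at the given exponent $\eta$: Theorem~\ref{thm:DF_psh_line_bundle} with $a=0$ and $b=\eta$ then yields continuity of the Bergman projection in $W^s(\Omega,L)$ for $0\le s<\tfrac{\eta}{2}$.

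To carry this out I would first put \eqref{eq:DF_strict_psh_line_bundle} in a more convenient form. Setting $\varphi=-(-\rho)^\eta$, which is continuous, negative, and comparable to $-\dist(\cdot,\partial\Omega)^\eta$ on $\Omega$, and writing $\Gamma(Z,\bar Z)=(\Theta_M+\Theta_L)(Z,\bar Z)-\abs{\tau_\nabla Z^\flat}^2$ for the continuous Hermitian form on $T^{1,0}(\Omega)$ that is nonnegative by \eqref{eq:positivity_condition_line_bundle}, a direct rearrangement shows that \eqref{eq:DF_strict_psh_line_bundle} is equivalent to
\[
  \ddbar\varphi(Z,\bar Z)\ge\varphi\,\Gamma(Z,\bar Z)+A_U\abs{Z}^2
\]
in the distribution sense on $U$, for every relatively compact $U\subset\Omega$ and every $Z\in T^{1,0}(U)$, while \eqref{eq:DF_psh_line_bundle} at the exponent $\eta$ is this same inequality for a $C^2$ function $\varphi$ with the term $A_U\abs{Z}^2$ omitted. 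The point of this form is that the zeroth-order coefficient $\Gamma$ and the metric are smooth whereas $\varphi$ is only continuous, so the two operations in Richberg's construction --- local mollification in coordinate charts, and gluing by a regularized maximum --- perturb both sides by amounts controlled by the smooth data and tending to $0$ with the regularization parameter, and can be absorbed into the slack $A_U\abs{Z}^2$.

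Concretely, I would fix a locally finite cover of $\Omega$ by coordinate balls $B_\alpha\Subset\Omega$ and mollify $\varphi$ in each chart: mollification commutes with $\ddbar$ in a fixed chart, and since $\Gamma$ is continuous and the metric smooth, $(\varphi\ast\chi_\eps)\,\Gamma$ and the convolution of the right-hand side differ from the corresponding terms by an amount that is uniformly small on slightly shrunken balls, so $\varphi\ast\chi_\eps$ satisfies the displayed inequality there with $A_U$ replaced by $A_U-o(1)$, still positive for $\eps$ small. To patch the local smoothings I would use the stability of the inequality under the regularized maximum: if $\varphi_1,\varphi_2$ are $C^2$ with $\ddbar\varphi_j(Z,\bar Z)\ge\varphi_j\,\Gamma(Z,\bar Z)+A\abs{Z}^2$ on a common set, then on $\set{\abs{\varphi_1-\varphi_2}<\delta}$ the two coefficients $\varphi_j\Gamma$ agree up to $O(\delta)$, so the classical fact that the maximum of functions sharing a $\ddbar$ lower bound retains it gives $\ddbar M_\delta(\varphi_1,\varphi_2)(Z,\bar Z)\ge M_\delta(\varphi_1,\varphi_2)\,\Gamma(Z,\bar Z)+(A-O(\delta))\abs{Z}^2$ for the regularized maximum $M_\delta$. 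Running Richberg's inductive patching over the cover --- subtracting a small bump at each stage so that the new local smoothing drops below the function accumulated so far near $\partial B_\alpha$ and the regularized maximum glues it in smoothly --- produces $\tilde\varphi\in C^\infty(\Omega)$ with $\ddbar\tilde\varphi(Z,\bar Z)\ge\tilde\varphi\,\Gamma(Z,\bar Z)$ on $\Omega$ and with $\abs{\tilde\varphi-\varphi}$ as small as we wish relative to $(-\rho)^\eta$, hence still comparable to $-\dist(\cdot,\partial\Omega)^\eta$. Setting $\tilde\rho:=-(-\tilde\varphi)^{1/\eta}$ then gives a Lipschitz defining function for $\Omega$ that is smooth on $\Omega$ and satisfies $(-\tilde\rho)^\eta=-\tilde\varphi$; reversing the rearrangement shows \eqref{eq:DF_psh_line_bundle} holds for $\tilde\rho$ at the exponent $\eta$, and Theorem~\ref{thm:DF_psh_line_bundle} finishes the argument.

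I expect the main obstacle to be the behavior near $\partial\Omega$: the mollification radii and the gluing thresholds $\delta$ must be driven to $0$ quickly enough as one approaches the boundary that, first, the accumulated $o(1)+O(\delta)$ losses never exhaust the constant $A_U$ available on a fixed relatively compact $U$, and second, $\tilde\varphi$ stays squeezed between two fixed multiples of $-\dist(\cdot,\partial\Omega)^\eta$ with gradient bounded by a multiple of $\dist(\cdot,\partial\Omega)^{\eta-1}$, so that $\tilde\rho$ is genuinely a Lipschitz defining function rather than merely a function vanishing at $\partial\Omega$ at an uncontrolled rate. This is precisely the step that requires the strict term $A_U\abs{Z}^2$ in \eqref{eq:DF_strict_psh_line_bundle} rather than the non-strict inequality of Theorem~\ref{thm:DF_psh_line_bundle}, which is also the reason Richberg's regularization theorem \cite{Ric68} is stated for strictly plurisubharmonic functions.
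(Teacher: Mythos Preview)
Your proposal is correct and follows exactly the approach the paper indicates: the paper states that the proof is ``a straightforward application of Richberg's techniques in \cite{Ric68}'' after observing that \eqref{eq:positivity_condition_line_bundle} is precisely the $a=0$ case of \eqref{eq:DF_psh_line_bundle}, so one regularizes only at the single exponent $\eta$ and then invokes Theorem~\ref{thm:DF_psh_line_bundle} with $a=0$ and $b=\eta$. Your outline of the Richberg machinery (local mollification, regularized maxima, absorbing the losses into the strict term $A_U|Z|^2$, and controlling $|\nabla\tilde\varphi|$ near $\partial\Omega$ so that $\tilde\rho=-(-\tilde\varphi)^{1/\eta}$ is genuinely Lipschitz) supplies more detail than the paper itself, which simply omits the proof.
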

We omit the proof, as it is a straightforward application of Richberg's techniques in \cite{Ric68}.

We conclude with an open problem that we hope will motivate future research: if we assume that the boundary of $\Omega$ is smooth, can we use \eqref{eq:DF_psh} or \eqref{eq:DF_strict_psh_line_bundle} as a substitute for the usual definition of the Diederich-Forn{\ae}ss index to prove higher-order Sobolev regularity as in \cite{Koh99}, \cite{Har11}, \cite{PiZa14}, \cite{Liu22}, or \cite{LiSt22}?

The plan of the paper is as follows: In Section \ref{sec:Sobolev_spaces}, we outline the key results relating weighted $L^2$ estimates to Sobolev space estimates, and justify applying these results in the setting of arbitrary complex manifolds.  In Section \ref{sec:Hermitian_geometry}, we derive the necessary geometric formulas for our $L^2$ estimates.  In Section \ref{sec:basic_estimate}, we derive a twisted Bochner-Kodaira-Nakano-Morrey-Kohn-H\"ormander identity for domains in Hermitian manifolds.  In Section \ref{sec:solution_operator}, we construct a solution operator to the specific inhomogeneous Cauchy-Riemann equation \eqref{eq:solution_identity} that we will need to estimate the Bergman projection.  In Section \ref{sec:estimates_Bergman}, we use our special solution operator to derive Sobolev Space estimates for the Bergman projection.  In Section \ref{sec:proof_of_C2_theorem}, we complete the proof of Theorem \ref{thm:DF_psh} (as well as Theorems \ref{thm:DF_psh_line_bundle} and \ref{thm:DF_strict_psh_line_bundle}).  Finally, we provide examples in Section \ref{sec:Example}, including the crucial computations on a Hopf manifold.

\section{Sobolev Spaces in Manifolds}

\label{sec:Sobolev_spaces}

Let $M$ be a Hermitian manifold of complex dimension $n$, and let $\Omega\subset M$ be a relatively compact domain with Lipschitz boundary.  Set $\delta(z)=\dist(z,\partial\Omega)$ with respect to the Hermitian metric on $M$. For some $N\geq 1$, let $\{U_j\}_{1\leq j\leq N}$ be an open cover of $\overline\Omega$ such that for each $1\leq j\leq N$ there exists a relatively compact subset $V_j\subset\mathbb{C}^{n}$ and a biholomorphism $\gamma_j:V_j\rightarrow U_j$.  Let $\{\chi_j\}_{1\leq j\leq N}$ be a partition of unity subordinate to $\{U_j\}_{1\leq j\leq N}$ on $\overline{\Omega}$.  For $u\in C^1(\overline\Omega)$ and $s\in\mathbb{R}$, we may define
\[
  \norm{u}_{W^s(\Omega)}=\sum_{j=1}^N\norm{(\chi_j u)\circ\gamma_j}_{W^s(V_j)}.
\]
We set $W^s(\Omega)$ equal to the completion of $C^\infty(\overline\Omega)$ with respect to this norm.  Since $\overline{\Omega}$ is compact and $\partial\Omega$ is Lipschitz, the space $W^s(\Omega)$ is independent of the choice of open cover or partition of unity; see Lemma 1.3.3.1 in \cite{Gri85}.  We define $W^s(\Omega,L)$ similarly, except that we replace $\gamma_j$ with a local trivialization of the holomorphic line bundle $L$.

Fix $0<s<\frac{1}{2}$.  Note that Theorem 1.4.4.3 in \cite{Gri85} implies that
\begin{equation}
\label{eq:weighted_norm_bounded_by_Sobolev_norm}
  \norm{\delta^{-s}u}_{L^2(\Omega)}\leq C\norm{u}_{W^s(\Omega)}
\end{equation}
for some constant $C>0$.  Although Grisvard's result is only stated for Euclidean space, the proof given is local, so it will extend to relatively compact domains in arbitrary manifolds.  It is relevant to note that $\delta$ is locally comparable to the distance function obtained by considering the Euclidean metric on any local coordinate patch.  On the other hand, Proposition 4.15 in \cite{JeKe95} implies that
\begin{equation}
\label{eq:Jerison_Kenig_Proposition}
  \norm{u}_{W^s(\Omega)}\leq C\norm{\delta^{1-s}\abs{d u}+\abs{u}}_{L^2(\Omega)}
\end{equation}
for some constant $C>0$.  Once again, Jerison and Kenig state their result for domains in Euclidean space, but the proof given is local, so it will extend to our case as well.

To complete the proof, we need Lemma 1 in \cite{Det81}, which requires $u$ to be a harmonic function, and hence does not localize as cleanly as the previous results.  Some of these issues can be avoided by replacing ``harmonic" with ``holomorphic," but we must still use some caution.  Let $\{U_j\}_{1\leq j\leq N}$ and $\{\tilde U_j\}_{1\leq j\leq N}$ be two open covers of $\overline\Omega$ such that for every $1\leq j\leq N$ $\overline{\tilde U_j}\subset U_j$ and there exists an open neighborhood $V_j\subset\mathbb{C}^n$ and a biholomorphism $\gamma_j:V_j\rightarrow U_j$.  Fix $r>0$ such that for every $1\leq j\leq N$, if $z\in\gamma_j^{-1}\left[\tilde U_j\right]$ then $B(z,r)\subset V_j$.  Let $u$ be a holomorphic function on $\Omega$, and observe that $u\circ\gamma_j$ is holomorphic on $\gamma_j^{-1}[U_j\cap\Omega]$ for each $1\leq j\leq N$.  Since the Euclidean metric on $T^{1,0}(V_j)$ is comparable to the metric obtained by pulling back the metric on $T^{1,0}(U_j)$ via $\gamma_j$, there exists a constant $\lambda>0$ such that $\gamma_j[B(z,R)]\subset B(\gamma_j(z),\lambda R)$ whenever $z\in\gamma_j^{-1}\left[\tilde U_j\right]$, and $0<R<r$.  We may use the Cauchy estimates (in the form given in $(\star\star)$ in \cite{Det81}) to show that
\begin{equation}
\label{eq:Cauchy_estimate}
  \abs{du(P)}^2\leq\frac{C}{R^{2n+2}}\int_{B(P,\lambda R)}\abs{u(w)}^2 dV_w
\end{equation}
for all $0<R<r$ and $P\in\Omega$ such that $B(P,\lambda R)\subset\Omega$.  Following the proof of Lemma 1 in \cite{Det81}, we may use \eqref{eq:Cauchy_estimate} to show
\[
  \int_{\{z\in\Omega:\delta(z)<2\lambda r\}}\abs{du(z)}^2(\delta(z))^{2-2s}dV_z\leq C\int_{\Omega}\abs{u(z)}^2(\delta(z))^{-2s}dV_z.
\]
Since \eqref{eq:Cauchy_estimate} immediately gives us
\[
  \int_{\{z\in\Omega:\delta(z)\geq 2\lambda r\}}\abs{du(z)}^2(\delta(z))^{2-2s}dV_z\leq C\int_{\Omega}\abs{u(z)}^2(\delta(z))^{-2s}dV_z,
\]
we have
\begin{equation}
\label{eq:Detraz_lemma}
  \norm{\delta^{1-s}du}_{L^2(\Omega)}\leq C\norm{\delta^{-s}u}_{L^2(\Omega)}
\end{equation}
for some $C>0$ whenever $u$ is holomorphic on $\Omega$.  Combining \eqref{eq:Detraz_lemma} with \eqref{eq:Jerison_Kenig_Proposition}, we see that
\begin{equation}
\label{eq:Sobolev_norm_bounded_by_weighted_norm}
  \norm{u}_{W^s(\Omega)}\leq C\norm{\delta^{-s}u}_{L^2(\Omega)}
\end{equation}
for some constant $C>0$ whenever $u$ is holomorphic.

\section{Hermitian Geometry}

\label{sec:Hermitian_geometry}

We begin by summarizing a few well-known facts about the torsion tensor in a Hermitian manifold.
\begin{lem}
\label{lem:torsion_facts}
  Let $M$ be a Hermitian manifold and let $\nabla$ denote the Chern connection on $T^{1,0}(M)$ extended to an affine connection on $T(M)$.  We have
  \begin{equation}
  \label{eq:Kahler_differential}
    d\omega=\tau_\nabla\omega
 \end{equation}
  and
  \begin{equation}
  \label{eq:torsion_vanishing}
    T_\nabla(Z,\bar W)\equiv 0\text{ for all }Z,W\in T^{1,0}(M).
  \end{equation}
\end{lem}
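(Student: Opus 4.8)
The plan is to work in a local unitary coframe and exploit the defining properties of the Chern connection. Let $\{W_j\}$ be a local frame for $T^{1,0}(M)$ with dual coframe $\{\theta^j\}\subset\Lambda^{1,0}(M)$, and write the connection $1$-forms $\omega^i_j$ via $\nabla W_j=\omega^i_j\otimes W_i$. The Chern connection is characterized by two properties: it is compatible with the Hermitian metric, and its $(0,1)$-part coincides with $\dbar$, which in frame terms means that each $\omega^i_j$ is a form of type $(1,0)$. From the first structure equation, the torsion $2$-form $\Theta^i := d\theta^i + \omega^i_j\wedge\theta^j$ measures $T_\nabla$; more precisely, one checks from the definition \eqref{eq:tau_defn} and the formula $d\theta^i(X,Y)=X\theta^i(Y)-Y\theta^i(X)-\theta^i([X,Y])$ that $\tau_\nabla\theta^i$ and $\Theta^i$ agree, since $\theta^i(\nabla_XY-\nabla_YX)=X\theta^i(Y)-Y\theta^i(X)+(\omega^i_j\wedge\theta^j)(X,Y)$ after expanding. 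Because $\omega^i_j$ is of type $(1,0)$ and $\theta^j$ is of type $(1,0)$, the term $\omega^i_j\wedge\theta^j$ is of type $(2,0)$; the $(1,1)$ and $(0,2)$ components of $\Theta^i$ therefore come entirely from $d\theta^i$. This already shows $\tau_\nabla$ maps $\Lambda^{1,0}(M)$ into $\Lambda^{2,0}(M)$ once we know the $(1,1)$ and $(0,2)$ parts of $d\theta^i$ are cancelled — which is exactly the content of \eqref{eq:torsion_vanishing}, so I would prove that first.

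For \eqref{eq:torsion_vanishing}: take $Z,W\in T^{1,0}(M)$ and compute $T_\nabla(Z,\bar W)=\nabla_Z\bar W-\nabla_{\bar W}Z-[Z,\bar W]$. Since the Chern connection's $(0,1)$-part is $\dbar$, we have $\nabla_{\bar W}Z=\dbar_{\bar W}Z$, and the defining property that $\nabla$ preserves the bidegree splitting $T_{\mathbb C}M=T^{1,0}\oplus T^{0,1}$ means $\nabla_Z\bar W\in T^{0,1}(M)$ and $\nabla_{\bar W}Z\in T^{1,0}(M)$. Writing $\nabla_ZW'$ for the $T^{1,0}$-valued piece, the key identity is that for the Chern connection $\nabla_{\bar W}Z - \nabla^{0,1}$-contribution matches the Lie-bracket decomposition: in a holomorphic frame $\{W_j\}$ the connection forms $\omega^i_j$ are $(1,0)$, so $\nabla_{\bar W}W_j = 0$ for each frame element, hence $\nabla_{\bar W}Z = (\bar W\cdot a^j)W_j$ when $Z=a^jW_j$. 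Comparing with $[Z,\bar W]$, whose $T^{1,0}$-component is $-( \bar W\cdot a^j)W_j$ and whose $T^{0,1}$-component is $(Z\cdot\bar b^k)\bar W_k$ when $\bar W = \bar b^k\bar W_k$, and noting $\nabla_Z\bar W = (Z\cdot\bar b^k)\bar W_k$ by the conjugate statement (connection forms for the conjugate frame are $(0,1)$), everything cancels and $T_\nabla(Z,\bar W)=0$.

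For \eqref{eq:Kahler_differential}: extend $\tau_\nabla$ as a derivation to $\Lambda^2(M)$ and write $\omega$ locally in terms of the coframe. Using that $\nabla$ is metric-compatible and torsion acts as $\tau_\nabla$, one has for the Levi-Civita-style computation $d\omega(X,Y,Z) = \sum_{\text{cyc}}\big(X\,\omega(Y,Z) - \omega([X,Y],Z)\big)$, and substituting the metric-compatibility relation $X\,\omega(Y,Z) = \omega(\nabla_XY,Z)+\omega(Y,\nabla_XZ)$ converts every $\omega([\cdot,\cdot],\cdot)$ into $\omega(\nabla_\bullet\bullet - \nabla_\bullet\bullet - [\bullet,\bullet],\cdot) = \omega(T_\nabla(\bullet,\bullet),\cdot)$; the $\nabla$-terms cancel in the cyclic sum by skew-symmetry of $\omega$, leaving precisely $\sum_{\text{cyc}}\omega(T_\nabla(X,Y),Z) = (\tau_\nabla\omega)(X,Y,Z)$ by the derivation definition of $\tau_\nabla$ on $2$-forms.

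The main obstacle is bookkeeping rather than conceptual: one must be careful that ``$\nabla$ extended to $T(M)$'' genuinely restricts to the Chern connection on $T^{1,0}$ and its conjugate on $T^{0,1}$, that the bidegree-preservation $\nabla(T^{1,0})\subset\Lambda^1\otimes T^{1,0}$ holds, and that the derivation extension of $\tau_\nabla$ is compatible with the usual intrinsic formula for $d$ on $2$-forms (signs in the cyclic sum). Once those conventions are pinned down, both identities follow from short frame computations.
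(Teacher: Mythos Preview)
Your proof is correct, with one caveat in the opening paragraph: the statement that the connection $1$-forms $\omega^i_j$ are of type $(1,0)$ holds only in a \emph{holomorphic} frame, not in an arbitrary local (or unitary) frame, since $\nabla^{0,1}=\dbar$ gives $\omega^i_j(\bar X)=0$ only when $\dbar W_j=0$. You tacitly switch to a holomorphic frame when you actually compute, so the argument goes through, but you should say so from the outset.

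Your argument for \eqref{eq:torsion_vanishing} is essentially the paper's, carried out with coefficient functions: the paper streamlines it by invoking tensoriality of $T_\nabla$ to reduce to holomorphic $Z,W$, for which $\nabla_{\bar W}Z$, $\nabla_Z\bar W$, and $[Z,\bar W]$ each vanish individually. Your argument for \eqref{eq:Kahler_differential}, however, takes a genuinely different route. The paper evaluates $\partial\omega(Z_1,Z_2,\bar W)$ on holomorphic vector fields, using that $\omega$ is $(1,1)$ to kill most terms in the invariant formula for $d$, and then conjugates for $\dbar\omega$. You instead prove the general identity $d\alpha=\tau_\nabla\alpha$ valid for any $2$-form with $\nabla\alpha=0$, by substituting $X\,\alpha(Y,Z)=\alpha(\nabla_XY,Z)+\alpha(Y,\nabla_XZ)$ into the cyclic formula and regrouping into torsion terms. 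Your route is more conceptual and makes no use of the complex structure (it applies to any connection with $\nabla\alpha=0$); the paper's route is shorter in this specific setting because the bidegree constraints and the holomorphic-frame trick eliminate terms for free.
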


\begin{proof}
  Since $T_\nabla$ is a tensor, it suffices to prove \eqref{eq:torsion_vanishing} for holomorphic vector fields, and then use multi-linearity to generalize to arbitrary sections of $T^{1,0}(M)$.  Choose holomorphic vector fields $Z,W\in T^{1,0}(U)$.  Since $\nabla_{\bar W}Z=0$, $\nabla_Z\bar W=0$, and $[Z,\bar W]=0$, we immediately obtain \eqref{eq:torsion_vanishing}.
  
  Let $U\subset M$ and choose holomorphic vector fields $Z_1,Z_2,W\in T^{1,0}(U)$.  Using the invariant definition of the exterior derivative and $\nabla\omega\equiv 0$, we immediately obtain
  \begin{multline*}
    \partial\omega(Z_1,Z_2,\bar W)=Z_1(\omega(Z_2,\bar W))-Z_2(\omega(Z_1,\bar W))-\omega([Z_1,Z_2],\bar W)=\\
    \omega(\nabla_{Z_1}Z_2,\bar W)-\omega(\nabla_{Z_2}Z_1,\bar W)-\omega([Z_1,Z_2],\bar W)=\omega(T_\nabla(Z_1,Z_2),\bar W).
  \end{multline*}
  Conjugation gives us the analogous result for $\dbar\omega$, and \eqref{eq:Kahler_differential} follows.
\end{proof}

Next, we collect a few formulas to demonstrate how easily the necessary curvatures can be computed in local coordinates:
\begin{lem}
\label{lem:curvature_facts}
  Let $M$ be a Hermitian manifold.  Suppose that on some neighborhood $U\subset M$, $\{W_j\}_{1\leq j\leq n}\subset T^{1,0}(U)$ is a basis of holomorphic vector fields and set $g_{j\bar k}=\left<W_j,W_k\right>$ for all $1\leq j,k\leq n$.  Then
  \begin{equation}
  \label{eq:ricci_computation}
    \Theta_M=-\ddbar\log\det(g_{j\bar k})_{1\leq j,k\leq n}\text{ on }U.
  \end{equation}
  Let $L\rightarrow M$ be a holomorphic line bundle equipped with a $C^2$ Hermitian metric $\left<\cdot,\cdot\right>_L$.  Suppose that $\mathcal{S}$ is a non-vanishing holomorphic section of $L$ on $U$ and set $\psi=-\log\abs{\mathcal{S}}^2_L$.  Then
  \begin{equation}
  \label{eq:line_bundle_curvature_computation}
    \Theta_L=\ddbar\psi\text{ on }U.
  \end{equation}
\end{lem}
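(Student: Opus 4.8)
The plan is to prove the two curvature formulas in Lemma \ref{lem:curvature_facts} by reducing everything to local coordinate computations with the Chern connection, exploiting that both $\Theta_M$ and $\Theta_L$ are traces (respectively contractions) of Chern curvature forms, which admit the familiar $-\ddbar\log$ expression.

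For \eqref{eq:line_bundle_curvature_computation}, I would start with the definition $\Theta_L(Z_1,\bar Z_2)=\abs{\mathcal{S}}_L^{-2}\left<R_{\nabla^L}(Z_1,\bar Z_2)\mathcal{S},\mathcal{S}\right>_L$ and compute $R_{\nabla^L}$ against the frame $\mathcal{S}$. Writing the connection form of the Chern connection on $L$ with respect to $\mathcal{S}$ as $\vartheta$, so $\nabla^L\mathcal{S}=\vartheta\otimes\mathcal{S}$, the compatibility of the Chern connection with the metric together with the requirement that $\vartheta$ be of type $(1,0)$ forces $\vartheta=\partial\log\abs{\mathcal{S}}_L^2=-\partial\psi$. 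Then the curvature form is $d\vartheta=-\ddbar\psi$ (the $\partial\partial$ and $\bar\partial\bar\partial$ pieces vanish since $\vartheta$ is $(1,0)$ and $d\vartheta$ must have a component matching $R_{\nabla^L}(Z_1,\bar Z_2)$), and since $\mathcal{S}$ is holomorphic this is exactly $R_{\nabla^L}\mathcal{S}=(\ddbar\psi)\otimes\mathcal{S}$, giving $\Theta_L=\ddbar\psi$ after the contraction. I would be careful about sign conventions, since these are the usual source of error: the key facts are $\vartheta^{(1,0)}$ determined by compatibility and $\mathcal{S}$ holomorphic so that $\bar\partial\mathcal{S}=0$.

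For \eqref{eq:ricci_computation}, I would apply \eqref{eq:line_bundle_curvature_computation} to the line bundle $\Lambda^{n,0}(M)=\det T^{*1,0}(M)$, or equivalently work directly with the trace of the Chern curvature of $T^{1,0}(M)$. In the holomorphic frame $\{W_j\}$ with $g_{j\bar k}=\left<W_j,W_k\right>$, the Chern connection matrix is $\theta=g^{-1}\partial g$ (the unique $(1,0)$ matrix compatible with the metric in a holomorphic frame), and its curvature is $R=\bar\partial\theta=\bar\partial(g^{-1}\partial g)$. By definition $\Theta_M=\operatorname{tr}R$ acting on $(Z_1,\bar Z_2)$, and the standard identity $\operatorname{tr}(g^{-1}\partial g)=\partial\log\det g$ gives $\operatorname{tr}R=\bar\partial\partial\log\det g=-\ddbar\log\det(g_{j\bar k})$. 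I should note that $\{W_j\}$ being a basis of holomorphic vector fields — not merely a smooth frame — is what makes $\theta=g^{-1}\partial g$ valid, and that the independence of the choice of frame was already asserted when $\Theta_M$ was defined, so the formula is well posed; alternatively, $\det g$ transforms by $\abs{\det(\text{holomorphic transition})}^2$, whose $\ddbar\log$ vanishes.

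The main obstacle — really the only delicate point — is pinning down the Chern connection form in a holomorphic frame and getting all the signs and type-decompositions right: that the connection form is $g^{-1}\partial g$ (type $(1,0)$), that holomorphicity of the frame kills the $\partial\partial$-part of the curvature, and that the contraction/trace in the definitions of $\Theta_M$ and $\Theta_L$ matches $\operatorname{tr}$ against $d\theta$ restricted to the $(1,1)$-component. Once these are set up correctly, both formulas drop out of $\operatorname{tr}(g^{-1}\partial g)=\partial\log\det g$ and its line-bundle analogue $\vartheta=\partial\log\abs{\mathcal{S}}_L^2$. I would present the line bundle case first since it is the cleaner computation, then deduce \eqref{eq:ricci_computation} either by the same direct argument or by specializing to $\det T^{*1,0}(M)$ with $\mathcal{S}=\theta^1\wedge\cdots\wedge\theta^n$, for which $\abs{\mathcal{S}}_L^{-2}=\det(g_{j\bar k})$.
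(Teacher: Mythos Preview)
Your proposal is correct and follows essentially the same route as the paper: both arguments compute the Chern connection in a holomorphic frame (the paper writes $\nabla_{Z_1}W_j=\sum_{k,\ell}(Z_1 g_{j\bar k})g^{\bar k\ell}W_\ell$, which is exactly your connection matrix $g^{-1}\partial g$ in components), take $\bar\partial$ to get the curvature, and then use Jacobi's formula $\operatorname{tr}(g^{-1}\partial g)=\partial\log\det g$ to obtain \eqref{eq:ricci_computation}; the line-bundle case is likewise identical. One small slip: with $\vartheta=-\partial\psi$ you have $d\vartheta=-\bar\partial\partial\psi=+\ddbar\psi$, not $-\ddbar\psi$ as you wrote, though you land on the correct $\Theta_L=\ddbar\psi$ in the next line.
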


\begin{proof}
  As in the proof of Lemma \ref{lem:torsion_facts}, it suffices to consider the actions of $\Theta_M$ and $\Theta_L$ on holomorphic vector fields.

  Let $\{\theta^j\}_{1\leq j\leq n}\subset\Lambda^{1,0}(U)$ be the basis dual to $\{W_j\}_{1\leq j\leq n}$, and set $g^{\bar k j}=\left<\theta^j,\theta^k\right>$.  Let $Z_1,Z_2\in T^{1,0}(U)$ be holomorphic vector fields.  We easily check that
  \[
    \nabla_{Z_1} W_j=\sum_{k,\ell=1}^n (Z_1 g_{j\bar k})g^{\bar k \ell}W_\ell,
  \]
  and so
  \[
    R_\nabla(Z_1,\bar Z_2)W_j=-\nabla_{\bar Z_2}\nabla_{Z_1} W_j=-\sum_{k,\ell=1}^n \bar Z_2((Z_1 g_{j\bar k})g^{\bar k \ell})W_\ell,
  \]
  and hence
  \[
    \Theta_M(Z_1,\bar Z_2)=-\sum_{j,k=1}^n \bar Z_2((Z_1 g_{j\bar k})g^{\bar k j}).
  \]
  By Jacobi's formula for the derivative of the determinant of a matrix of differentiable functions, \eqref{eq:ricci_computation} follows.
  
  Let $\nabla^L$ denote the Chern connection on $L$.  With $Z_1$ and $Z_2$ as before,
  \[
    \nabla^L_{Z_1}\mathcal{S}=e^\psi\left<\nabla^L_{Z_1}\mathcal{S},\mathcal{S}\right>_L\mathcal{S}=e^\psi(Z_1|\mathcal{S}|^2_L)\mathcal{S}=-(Z_1\psi)\mathcal{S},
  \]
  so
  \[
    R_{\nabla^L}(Z_1,\bar Z_2)\mathcal{S}=-\nabla_{\bar Z_2}\nabla_{Z_1}\mathcal{S}=(\bar Z_2 Z_1\psi)\mathcal{S},
  \]
  from which \eqref{eq:line_bundle_curvature_computation} follows.
\end{proof}
Observe that $\Theta_M$ and $\Theta_L$ are locally $\ddbar$-exact, which guarantees that they are globally $\ddbar$-closed, and hence represent Bott-Chern cohomology classes.  More precisely, these are scalar multiples of the first Chern classes of $T^{1,0}(M)$ and $L$, and \eqref{eq:ricci_computation} and \eqref{eq:line_bundle_curvature_computation} are special cases of the same well-known computation of the first Chern class.  As an immediate consequence of Lemma \ref{lem:curvature_facts}, we have
\begin{prop}
\label{prop:Kahler_optimal}
  Let $M$ be a Hermitian manifold of dimension $n\geq 2$ admitting a K\"ahler metric $g$ on $T^{1,0}(M)$ and let $\Omega\subset M$ be a relatively compact domain with Lipschitz boundary.  Let $\tilde g$ denote a Hermitian metric for $T^{1,0}(M)$ (not necessarily K\"ahler) with curvature denoted $\tilde\Theta_M$.  Suppose that for some $0\leq a<b\leq 1$, there exists a Lipschitz defining function $\rho$ for $\Omega$ such that $\rho|_\Omega\in C^2(\Omega)$ and for every $Z\in T^{1,0}(\Omega)$
  \begin{equation}
  \label{eq:DF_psh_Hermitian}
    \left((-\rho)^\eta\tilde\Theta_M-\ddbar(-\rho)^\eta\right)(Z,\bar Z)\geq(-\rho)^\eta\abs{\tau_\nabla Z^\flat}^2
  \end{equation}
  on $\Omega$ for $\eta=a$ and $\eta=b$.  Then there exists $\psi\in C^2(M)$ such that for every $Z\in T^{1,0}(\Omega)$
  \begin{equation}
  \label{eq:DF_psh_Kahler}
    \left((-\rho)^\eta\left(\Theta_M+\ddbar\psi\right)-\ddbar(-\rho)^\eta\right)(Z,\bar Z)\geq 0
  \end{equation}
  on $\Omega$ for $\eta=a$ and $\eta=b$.
\end{prop}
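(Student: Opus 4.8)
The plan is to take for $\psi$ the function given in any holomorphic coordinate frame by $\log\det(g_{j\bar k})-\log\det(\tilde g_{j\bar k})$, essentially the logarithm of the ratio of the volume forms of $g$ and $\tilde g$; with this choice the hypothesis \eqref{eq:DF_psh_Hermitian} turns into the conclusion \eqref{eq:DF_psh_Kahler} after discarding one manifestly nonnegative term. The reason such a $\psi$ works is that a K\"ahler metric has torsion-free Chern connection, so by Lemma~\ref{lem:torsion_facts} (see \eqref{eq:Kahler_differential}) the operator $\tau_\nabla$ vanishes on $\Lambda^{1,0}(M)$; hence the right-hand side of \eqref{eq:DF_psh}/\eqref{eq:DF_psh_line_bundle} for the metric $g$ with weight $\psi$ is identically $0$, which is precisely why \eqref{eq:DF_psh_Kahler} carries no torsion term. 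It therefore suffices to produce $\psi\in C^2(M)$ with $\Theta_M+\ddbar\psi=\tilde\Theta_M$ on $M$: given this, for $\eta\in\{a,b\}$ and $Z\in T^{1,0}(\Omega)$ the hypothesis \eqref{eq:DF_psh_Hermitian} says
\[
  \bigl((-\rho)^\eta(\Theta_M+\ddbar\psi)-\ddbar(-\rho)^\eta\bigr)(Z,\bar Z)\geq(-\rho)^\eta\abs{\tau_\nabla Z^\flat}^2\geq 0,
\]
since $(-\rho)^\eta\geq 0$ on $\Omega$ (here $\tau_\nabla$ and $\abs{\cdot}$ refer to $\tilde g$), and this is exactly \eqref{eq:DF_psh_Kahler}.

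First I would construct $\psi$. Cover $M$ by holomorphic coordinate charts $\{U_\alpha\}$, and on $U_\alpha$ set $\psi_\alpha:=\log\det(g^\alpha_{j\bar k})-\log\det(\tilde g^\alpha_{j\bar k})$, where $g^\alpha_{j\bar k}$ and $\tilde g^\alpha_{j\bar k}$ are the Hermitian component matrices of $g$ and $\tilde g$ in the coordinate frame on $U_\alpha$. These matrices are positive definite, so $\psi_\alpha$ is well-defined and inherits the regularity of the metrics; since $\Theta_M$ and $\tilde\Theta_M$ must be defined, $g$ and $\tilde g$ are at least $C^2$, and hence so is $\psi_\alpha$. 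On an overlap $U_\alpha\cap U_\beta$ the holomorphic change of coordinate frame has an invertible holomorphic Jacobian matrix $A$, and \emph{both} determinants pick up the same factor $\abs{\det A}^2$; thus $\psi_\alpha=\psi_\beta$ there, so the $\psi_\alpha$ glue to a single $\psi\in C^2(M)$. The one step I would treat with care is exactly this cancellation: it succeeds precisely because $g$ and $\tilde g$ are metrics on the \emph{same} bundle $T^{1,0}(M)$, so the transition factor is common to both and $\psi$ carries no constant or pluriharmonic ambiguity. Then, applying \eqref{eq:ricci_computation} of Lemma~\ref{lem:curvature_facts} to $g$ and to $\tilde g$ in the coordinate frame on $U_\alpha$ gives $\Theta_M=-\ddbar\log\det(g^\alpha_{j\bar k})$ and $\tilde\Theta_M=-\ddbar\log\det(\tilde g^\alpha_{j\bar k})$; subtracting yields $\tilde\Theta_M-\Theta_M=\ddbar\psi_\alpha$ on $U_\alpha$, and since both curvature forms and $\psi$ are globally defined this gives $\Theta_M+\ddbar\psi=\tilde\Theta_M$ on all of $M$, which is what was needed.

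I do not expect a genuine obstacle here. The content of the proposition is really the bookkeeping that makes $\psi$ globally well-defined, together with the structural observation that for a K\"ahler metric the torsion term in the criterion \eqref{eq:DF_psh} vanishes identically, so that the nonnegative right-hand side of the hypothesis may simply be dropped when passing from $\tilde g$ to $g$.
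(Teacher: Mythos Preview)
Your proposal is correct and follows essentially the same route as the paper: define $\psi$ locally by $\log\det(g_{j\bar k})-\log\det(\tilde g_{j\bar k})$, observe that it is coordinate-independent and hence global, use \eqref{eq:ricci_computation} to get $\Theta_M+\ddbar\psi=\tilde\Theta_M$, and then drop the nonnegative right-hand side of \eqref{eq:DF_psh_Hermitian}. Your write-up is in fact slightly more explicit than the paper's, spelling out the Jacobian cancellation on overlaps and the reason the K\"ahler conclusion carries no torsion term.
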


\begin{proof}
  It suffices to note that \eqref{eq:ricci_computation} implies that we locally have
  \[
    \Theta_M-\tilde\Theta_M=-\ddbar\log\det(g_{j\bar k})_{1\leq j,k\leq n}+\ddbar\log\det(\tilde g_{j\bar k})_{1\leq j,k\leq n},
  \]
  so we may locally define
  \[
    \psi=\log\det(g_{j\bar k})_{1\leq j,k\leq n}-\log\det(\tilde g_{j\bar k})_{1\leq j,k\leq n}.
  \]
  Since $\psi$ so-defined is independent of the choice of local coordinates, we obtain a global weight function which allows us to derive \eqref{eq:DF_psh_Kahler} from \eqref{eq:DF_psh_Hermitian}, since the right-hand side of \eqref{eq:DF_psh_Hermitian} is nonnegative.
\end{proof}

Because we will be integrating by parts, we will need an explicit formula for the divergence of a vector field in terms of the Chern connection.  This is easy to compute for K\"ahler manifolds using geodesic coordinates, but for non-K\"ahler manifolds we must compute the impact of the torsion.
\begin{lem}
  Let $M$ be a Hermitian manifold and let $\nabla$ denote the Chern connection on $T^{1,0}(M)$.  Let $U\subset M$ admit a basis $\{W_j\}_{1\leq j\leq n}$ for $T^{1,0}(U)$, and let $\{\theta^j\}_{1\leq j\leq n}\subset\Lambda^{1,0}(U)$ denote the dual basis.  Then for any $C^1$ section $Z\in T^{1,0}(U)$, we have
  \begin{equation}
  \label{eq:divergence_Chern}
    \Div Z=\sum_{j=1}^n\theta^j\left(\nabla_{W_j}Z\right)-\sum_{j=1}^n\theta^j\left(T_\nabla(W_j,Z)\right).
  \end{equation}
\end{lem}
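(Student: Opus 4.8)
The plan is to reduce the identity to a local holomorphic coordinate computation and then match the two sides term by term. First I would note that \emph{both} sides of \eqref{eq:divergence_Chern} are independent of the choice of frame $\{W_j\}$: the first sum is the trace of the bundle endomorphism $X\mapsto\nabla_X Z$ of $T^{1,0}(U)$, and the second is the trace of $X\mapsto T_\nabla(X,Z)$, which indeed maps $T^{1,0}(U)$ to itself because $\nabla$ preserves $T^{1,0}(M)$ and $[T^{1,0},T^{1,0}]\subset T^{1,0}$ by integrability of the complex structure. Hence it suffices to verify \eqref{eq:divergence_Chern} when $W_j=\partial/\partial z^j$ for holomorphic coordinates $(z^1,\dots,z^n)$ on $U$, with $\theta^j=dz^j$.

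Next I would recall the coordinate expression for the Riemannian divergence. Writing $G=\det(g_{j\bar k})_{1\le j,k\le n}$, the Riemannian volume density is a dimensional constant times $G$, so for $Z=\sum_j Z^j\partial_{z^j}$ the formula $\Div Z=(\det g_{\mathbb R})^{-1/2}\,\partial_\mu\big((\det g_{\mathbb R})^{1/2}Z^\mu\big)$ collapses (the constant cancels, and $Z$ has no anti-holomorphic component) to
\[
  \Div Z=\sum_{j=1}^n\frac{\partial Z^j}{\partial z^j}+\sum_{j=1}^n Z^j\,\frac{\partial}{\partial z^j}\log G,
\]
where Jacobi's formula gives $\partial_{z^j}\log G=\sum_{k,m}g^{\bar m k}\,\partial_{z^j}g_{k\bar m}$.

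Then I would compute the right-hand side of \eqref{eq:divergence_Chern} using the Chern Christoffel symbols, which in a holomorphic frame are $\Gamma^\ell_{jk}=\sum_m g^{\bar m\ell}\,\partial_{z^j}g_{k\bar m}$ (the same formula for $\nabla_{Z_1}W_j$ used in the proof of Lemma~\ref{lem:curvature_facts}). From $\nabla_{\partial_j}Z=\sum_k(\partial_{z^j}Z^k)\partial_k+\sum_{k,\ell}Z^k\Gamma^\ell_{jk}\partial_\ell$ I get $\sum_j\theta^j(\nabla_{W_j}Z)=\sum_j\partial_{z^j}Z^j+\sum_{j,k}Z^k\Gamma^j_{jk}$, and from $T_\nabla(\partial_j,\partial_k)=\sum_\ell(\Gamma^\ell_{jk}-\Gamma^\ell_{kj})\partial_\ell$ I get $\sum_j\theta^j(T_\nabla(W_j,Z))=\sum_{j,k}Z^k(\Gamma^j_{jk}-\Gamma^j_{kj})$. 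Subtracting, the $\Gamma^j_{jk}$ terms cancel and the right-hand side of \eqref{eq:divergence_Chern} becomes $\sum_j\partial_{z^j}Z^j+\sum_{j,k}Z^k\Gamma^j_{kj}$; since $\sum_j\Gamma^j_{kj}=\sum_{j,m}g^{\bar m j}\,\partial_{z^k}g_{j\bar m}=\partial_{z^k}\log G$, this is exactly the expression for $\Div Z$ above, finishing the proof.

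The calculation is essentially routine; the only point demanding care is the bookkeeping that $\sum_j\theta^j(\nabla_{W_j}Z)$ is \emph{not} itself $\Div Z$ in a holomorphic frame — the two differ by $\sum_{j,k,m}Z^k g^{\bar m j}\big(\partial_{z^j}g_{k\bar m}-\partial_{z^k}g_{j\bar m}\big)$, which vanishes precisely in the K\"ahler case and in general is cancelled by the torsion term — together with the initial frame-independence reduction (in particular that $T_\nabla(W_j,Z)$ is genuinely $(1,0)$, which uses integrability). A coordinate-free alternative is to write $\nabla=\tilde\nabla+K$ with $\tilde\nabla$ the Levi-Civita connection and $K$ the contorsion tensor of the metric connection $\nabla$, use $2\langle K_XY,W\rangle=\langle T_\nabla(X,Y),W\rangle-\langle T_\nabla(Y,W),X\rangle+\langle T_\nabla(W,X),Y\rangle$ to get $\Div Z=\mathrm{tr}(\nabla Z)-\mathrm{tr}(X\mapsto T_\nabla(X,Z))$, and then evaluate both traces over $T(M)\otimes\mathbb C$ in a unitary frame $\{W_j,\bar W_j\}$, where the $\bar W_j$-contributions drop out because $\nabla$ preserves $T^{1,0}$ and, by \eqref{eq:torsion_vanishing}, $T_\nabla(\bar W_j,Z)=0$.
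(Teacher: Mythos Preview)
Your proof is correct and takes a genuinely different route from the paper. The paper works coordinate-free: it starts from the Levi--Civita formula $\Div X=\sum_j\phi^j(D_{X_j}X)$, specializes to an \emph{orthonormal} (not holomorphic) frame $\{W_j\}$ of $T^{1,0}(U)$, and manipulates $\theta^j(D_{W_j}Z)+\bar\theta^j(D_{\bar W_j}Z)$ using metric compatibility and torsion-freeness of $D$ until only Lie brackets remain; those brackets are then rewritten via $\nabla$ and $T_\nabla$. Your main argument instead fixes a \emph{holomorphic} coordinate frame, invokes the classical density formula $\Div Z=G^{-1}\partial_j(GZ^j)$, computes the Chern Christoffel symbols explicitly, and matches via Jacobi's formula $\sum_j\Gamma^j_{kj}=\partial_{z^k}\log G$. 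This is more elementary and makes the role of the determinant transparent, at the cost of being tied to coordinates; the paper's argument is more structural but requires the orthonormal reduction and several inner-product identities. Your coordinate-free alternative via the contorsion tensor is closest in spirit to the paper (both ultimately compare $\nabla$ to the Levi--Civita connection), but packages the comparison more efficiently through the standard identity $\mathrm{tr}(K_{\cdot}Z)=\mathrm{tr}\,T_\nabla(\cdot,Z)$; one should perhaps remark explicitly that the real connection on $T(M)$ induced by the Chern connection is indeed $g$-compatible (since $g=\mathrm{Re}\,h$ and $\nabla h=0$), which is what makes the contorsion formula applicable.
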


\begin{proof}
  For $U\subset M$, let $\{X_j\}_{1\leq j\leq 2n}\subset T(U)$ be a basis with dual basis $\{\phi^j\}_{1\leq j\leq 2n}\subset\Lambda^1(U)$.  Let $D$ denote the Levi-Civita connection on $T(M)$.  For any $C^1$ section $X\in T(U)$, we have
  \begin{equation}
  \label{eq:divergence_defn}
    \Div X=\sum_{j=1}^{2n}\phi^j\left(D_{X_j}X\right).
  \end{equation}
  This is easily verified at a fixed point $P\in U$ by using geodesic coordinates at $P$ and setting $X_j=\frac{\partial}{\partial x_j}$.  Since \eqref{eq:divergence_defn} is independent of our choice of moving coordinate frame, we see that \eqref{eq:divergence_defn} must hold for any such frame.  In our given bases for $T^{1,0}(U)$ and $\Lambda^{1,0}(U)$, \eqref{eq:divergence_defn} gives us
  \begin{equation}
  \label{eq:divergence_Levi_Civita}
    \Div Z=\sum_{j=1}^n\left(\theta^j\left(D_{W_j}Z\right)+\bar\theta^j\left(D_{\bar W_j}Z\right)\right)
  \end{equation}
  for any $C^1$ section $Z\in T^{1,0}(U)$.
  
  Suppose that $\{W_j\}_{1\leq j\leq n}$ is an orthonormal basis of $C^1$ sections of $T^{1,0}(U)$.  Then since $D$ is compatible with the metric and torsion-free, we have
  \begin{multline*}
    \theta^j\left(D_{W_j}Z\right)=\left<D_{W_j}Z,W_j\right>=\left<[W_j,Z],W_j\right>+\left<D_Z W_j,W_j\right>\\
    =\left<[W_j,Z],W_j\right>-\left<W_j,D_{\bar Z}W_j\right>
    =\left<[W_j,Z],W_j\right>-\left<W_j,[\bar Z,W_j]\right>-\left<W_j,D_{W_j}\bar Z\right>,
  \end{multline*}
  and hence
  \[
    \theta^j\left(D_{W_j}Z\right)+\bar\theta^j\left(D_{\bar W_j}Z\right)=\left<[W_j,Z],W_j\right>-\left<W_j,[\bar Z,W_j]\right>.
  \]
  Expanding the right-hand side using the metric compatibility of $\nabla$, \eqref{eq:torsion_vanishing}, and the fact that $\nabla_{W_j}\bar Z$ is orthogonal to $W_j$, we have
  \begin{align*}
    \theta^j\left(D_{W_j}Z\right)+\bar\theta^j\left(D_{\bar W_j}Z\right)&=\left<\nabla_{W_j}Z,W_j\right>-\left<T_\nabla(W_j,Z),W_j\right>\\
    &=\theta^j\left(\nabla_{W_j}Z\right)-\theta^j\left(T_\nabla(W_j,Z)\right).
  \end{align*}
  We may substitute this in \eqref{eq:divergence_Levi_Civita} to obtain \eqref{eq:divergence_Chern}.  Since \eqref{eq:divergence_Chern} is independent of the choice of coordinates, we may drop the requirement that our moving coordinate frame is orthonormal.

\end{proof}

It will be helpful to a define a modified connection to accommodate the torsion in \eqref{eq:divergence_Chern}.  For $X,Y\in T(M)$ such that $Y$ is a $C^1$ section, we define
\begin{equation}
\label{eq:nabla_T_defn}
  \nabla^T_X Y=\nabla_X Y-T_\nabla(X,Y).
\end{equation}
This also extends to the complexified tangent bundle in the natural way.  Using our new connection defined by \eqref{eq:nabla_T_defn} in \eqref{eq:divergence_Chern}, we have
\begin{equation}
\label{eq:divergence_Chern_T}
  \Div Z=\sum_{j=1}^n\theta^j\left(\nabla^T_{W_j}Z\right).
\end{equation}

We will need the divergence to compute the formal adjoint of a differential operator.  Indeed, if $Z$ is a $C^1$ section of $T^{1,0}(M)$, then the formal adjoint of the differential operator $\bar Z$ with respect to the $L^2$ inner product is given by
\begin{equation}
\label{eq:adjoint_bar_Z}
  \bar Z^*=-Z-\Div Z.
\end{equation}
In practice, we will usually compute the adjoint with respect to the weighted $L^2$ inner product space $L^2(\Omega,\psi)$.  In this case, $\bar Z^*_\psi=e^{\psi}\bar Z^*e^{-\psi}$, so \eqref{eq:adjoint_bar_Z} gives us
\begin{equation}
\label{eq:adjoint_bar_Z_weighted}
  \bar Z^*_\psi=-Z-\Div Z+Z\psi.
\end{equation}

Now, we will compute the commutator of a differential operator with the adjoint of a differential operator.  Since we will only need this formula when both differential operators are holomorphic, we will only carry out the computation in this comparatively simply case.
\begin{lem}
  Let $M$ be a Hermitian manifold, let $\nabla$ denote the Chern connection for $T^{1,0}(M)$ and let $\psi\in C^2(M)$.  For any $U\subset M$ with holomorphic vector fields $Z_1,Z_2\in T^{1,0}(U)$, we have
  \begin{equation}
  \label{eq:commutation_relation_Ricci}
    [\bar Z_2,(\bar Z_1)^*_\psi]=\left(\Theta_M+\ddbar\psi\right)(Z_1,\bar Z_2).
  \end{equation}
\end{lem}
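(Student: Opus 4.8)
The plan is to reduce \eqref{eq:commutation_relation_Ricci} to a short local computation built on the explicit formula \eqref{eq:adjoint_bar_Z_weighted} for the weighted formal adjoint. Writing $(\bar Z_1)^*_\psi = -Z_1 - \Div Z_1 + Z_1\psi$, I would split
\[
  [\bar Z_2,(\bar Z_1)^*_\psi] = [\bar Z_2,-Z_1] + [\bar Z_2,-\Div Z_1] + [\bar Z_2,Z_1\psi].
\]
The first bracket equals the first-order operator $[Z_1,\bar Z_2]$, which vanishes because $Z_1$ is holomorphic and $\bar Z_2$ anti-holomorphic, exactly as in the proof of Lemma \ref{lem:torsion_facts}. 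The remaining two brackets are commutators of the vector field $\bar Z_2$ with the multiplication operators $\Div Z_1$ and $Z_1\psi$, hence are the zeroth-order operators given by the functions $-\bar Z_2(\Div Z_1)$ and $\bar Z_2(Z_1\psi)$. So the identity reduces to computing these two functions.

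For the weight term, the invariant formula for $d$ (used already in the proof of Lemma \ref{lem:torsion_facts}) gives, since $\bar\partial\psi$ annihilates $(1,0)$-vectors and $[Z_1,\bar Z_2]=0$,
\[
  \ddbar\psi(Z_1,\bar Z_2) = Z_1\bigl((\bar\partial\psi)(\bar Z_2)\bigr) - \bar Z_2\bigl((\bar\partial\psi)(Z_1)\bigr) - (\bar\partial\psi)([Z_1,\bar Z_2]) = Z_1(\bar Z_2\psi) = \bar Z_2(Z_1\psi),
\]
so the third bracket is the function $\ddbar\psi(Z_1,\bar Z_2)$.

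The heart of the proof is the claim $\bar Z_2(\Div Z_1) = -\Theta_M(Z_1,\bar Z_2)$. I would prove this in local holomorphic coordinates $z^1,\dots,z^n$, writing $Z_1 = \sum_k Z_1^k\partial_k$ with $Z_1^k$ holomorphic. Substituting the Chern Christoffel symbols $\Gamma^\ell_{jk} = \sum_m (\partial_j g_{k\bar m})g^{\bar m\ell}$ and the induced torsion $T_\nabla(\partial_j,\partial_k) = \sum_\ell(\Gamma^\ell_{jk}-\Gamma^\ell_{kj})\partial_\ell$ into \eqref{eq:divergence_Chern}, the contributions of $\Gamma^j_{jk}$ cancel and one is left with
\[
  \Div Z_1 = \sum_{j=1}^n\partial_j Z_1^j + \sum_{j,k}Z_1^k\Gamma^j_{kj} = \sum_{j=1}^n\partial_j Z_1^j + Z_1\bigl(\log\det(g_{j\bar k})\bigr),
\]
the last equality being Jacobi's formula exactly as in the proof of Lemma \ref{lem:curvature_facts}. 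Since each $Z_1^j$ is holomorphic, $\bar Z_2\bigl(\sum_j\partial_j Z_1^j\bigr)=0$, hence $\bar Z_2(\Div Z_1) = \bar Z_2 Z_1\bigl(\log\det(g_{j\bar k})\bigr) = \ddbar\log\det(g_{j\bar k})(Z_1,\bar Z_2) = -\Theta_M(Z_1,\bar Z_2)$ by \eqref{eq:ricci_computation}. Assembling the three brackets gives $[\bar Z_2,(\bar Z_1)^*_\psi] = \Theta_M(Z_1,\bar Z_2) + \ddbar\psi(Z_1,\bar Z_2)$, which is \eqref{eq:commutation_relation_Ricci}.

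The delicate point is the cancellation of the torsion correction in \eqref{eq:divergence_Chern}: one must verify that this term is precisely what converts the Chern-connection trace $\sum_j\theta^j(\nabla_{W_j}Z_1)$ into the naive coordinate divergence $\sum_j\partial_j Z_1^j + Z_1\log\det(g_{j\bar k})$, which requires care with which slot of $g_{j\bar k}$ is differentiated. A coordinate-free alternative is to work in a holomorphic frame, use $\nabla_{\bar Z_2}\theta^j = 0$ and $[\bar Z_2,W_j] = \nabla_{\bar Z_2}W_j = \nabla_{\bar Z_2}Z_1 = 0$ to express $\bar Z_2(\Div Z_1)$ through $R_\nabla(W_j,\bar Z_2)Z_1$ and $(\nabla_{\bar Z_2}T_\nabla)(W_j,Z_1)$, and then invoke the first Bianchi identity for connections with torsion, most of whose terms vanish by \eqref{eq:torsion_vanishing}; but the coordinate computation is shorter, so I would present that.
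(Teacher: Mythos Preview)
Your proposal is correct; the decomposition using \eqref{eq:adjoint_bar_Z_weighted} and the identification of the weight term with $\ddbar\psi(Z_1,\bar Z_2)$ match the paper exactly. The difference lies in how you compute $\bar Z_2(\Div Z_1)$. You work in local holomorphic coordinates, expand \eqref{eq:divergence_Chern} with explicit Christoffel symbols, observe the torsion correction converts $\sum_{j,k}Z_1^k\Gamma^j_{jk}$ into $\sum_{j,k}Z_1^k\Gamma^j_{kj}$, apply Jacobi's formula to obtain $\Div Z_1=\sum_j\partial_j Z_1^j+Z_1\log\det(g_{j\bar k})$, and then invoke \eqref{eq:ricci_computation}. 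The paper instead stays frame-free: it uses the modified connection $\nabla^T$ and the compact formula \eqref{eq:divergence_Chern_T}, notes $\nabla_{\bar Z_2}\theta^j=0$ for a holomorphic coframe, and reduces $\nabla_{\bar Z_2}\nabla^T_{W_j}Z_1$ to $\nabla_{\bar Z_2}\nabla_{Z_1}W_j$ (because $[W_j,Z_1]$ is holomorphic), which is $-R_\nabla(Z_1,\bar Z_2)W_j$ directly from the curvature definition. Your route is concrete and reuses Lemma~\ref{lem:curvature_facts}, at the cost of the index bookkeeping you flag as ``delicate''; the paper's route avoids Christoffel symbols entirely and makes transparent why the torsion disappears, but requires having introduced $\nabla^T$. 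Your sketched coordinate-free alternative via the first Bianchi identity is unnecessarily roundabout compared to either.
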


\begin{proof}
  Fix holomorphic vector fields $Z_1,Z_2\in T^{1,0}(M)$.  Since $[Z_1,\bar Z_2]=0$, we may use \eqref{eq:adjoint_bar_Z_weighted} to compute
  \begin{equation}
  \label{eq:commutation_relation}
    [\bar Z_2,(\bar Z_1)^*_\psi]=-\bar Z_2\Div Z_1+\ddbar\psi(Z_1,\bar Z_2).
  \end{equation}
  Let $\{W_j\}_{1\leq j\leq n}$ be a basis of holomorphic vector fields for $T^{1,0}(U)$.  Observe that this also means that the dual basis $\{\theta^j\}_{1\leq j\leq n}$ is a basis of holomorphic $(1,0)$-forms for $\Lambda^{1,0}(U)$.  We use \eqref{eq:divergence_Chern_T} to compute
  \begin{equation}
  \label{eq:divergence_derivative_temp}
    \bar Z_2\Div Z_1=\sum_{j=1}^n\theta^j\left(\nabla_{\bar Z_2}\nabla^T_{W_j}Z_1\right).
  \end{equation}
  Note that \eqref{eq:nabla_T_defn} gives us
  \[
    \nabla_{\bar Z_2}\nabla^T_{W_j}Z_1=\nabla_{\bar Z_2}\nabla_{W_j}Z_1-\nabla_{\bar Z_2}T_\nabla(W_j,Z_1),
  \]
  and since $[W_j,Z_1]$ is also holomorphic, we have
  \[
    \nabla_{\bar Z_2}\nabla^T_{W_j}Z_1=\nabla_{\bar Z_2}\nabla_{Z_1}W_j=-R_\nabla(Z_1,\bar Z_2)W_j.
  \]
  Substituting in \eqref{eq:divergence_derivative_temp}, we have
  \begin{equation}
  \label{eq:divergence_derivative}
    \bar Z_2\Div Z_1=-\Theta_M(Z_1,\bar Z_2)
  \end{equation}
  for any holomorphic vector fields $Z_1,Z_2\in T^{1,0}(U)$.  Substituting \eqref{eq:divergence_derivative} in \eqref{eq:commutation_relation}, we have \eqref{eq:commutation_relation_Ricci}.
\end{proof}

As with any affine connection, $\nabla^T$ extends naturally to a derivation on $\Lambda^q(M)$.  If $\phi\in\Lambda^1(M)$ is a $C^1$ form and $X,Y\in T(M)$, then \eqref{eq:nabla_T_defn} gives us
\[
  (\nabla^T_X \phi)(Y)=X(\phi(Y))-\phi(\nabla^T_X Y)=X(\phi(Y))-\phi(\nabla_X Y)+\phi(T_\nabla(X,Y)),
\]
so
\begin{equation}
\label{eq:nabla_T_forms}
  (\nabla^T_X \phi)(Y)=(\nabla_X\phi)(Y)+\phi(T_\nabla(X,Y)).
\end{equation}

Observe that for any $U\subset M$, if $\{W_j\}_{1\leq j\leq n}$ is a basis for $T^{1,0}(U)$ with dual basis $\{\theta^j\}_{1\leq j\leq n}\subset\Lambda^{1,0}(U)$, then \eqref{eq:nabla_T_forms} gives us
\begin{equation}
\label{eq:nabla_T_difference}
  (\nabla^T_{\bar Z}-\nabla_{\bar Z})u=\sum_{j=1}^n\left((\nabla^T_{\bar Z}-\nabla_{\bar Z})u\right)(\bar W_j)\bar\theta^j=\sum_{j=1}^n u(T_\nabla(\bar Z,\bar W_j))\bar\theta^j
\end{equation}
for any $u\in\Lambda^{0,1}(U)$ and any $Z\in T^{1,0}(U)$.

If we write $g^{\bar j k}=\left<\theta^k,\theta^j\right>$, then may define a family of inner products (with associated norms)
\begin{equation}
\label{eq:nabla_T_inner_product}
  \left<\bar\nabla^1 u,\bar\nabla^2 v\right>=\sum_{j,k=1}^n\left<g^{\bar j k}\nabla^1_{\bar L_j}u,\nabla^2_{\bar L_k}v\right>,
\end{equation}
where $\nabla^1$ and $\nabla^2$ are affine connections (in practice, these will always be linear combinations of $\nabla$ and $\nabla^T$) and $u,v\in\Lambda^{0,1}(U)$ are $C^1$ forms.  We note that the definition given in \eqref{eq:nabla_T_inner_product} is independent of our choice of coordinates.

Finally, we will compute a formula to help evaluate the inner products involving $\tau_\nabla$.
\begin{lem}
  Let $M$ be a Hermitian manifold and let $\nabla$ be the Chern connection on $T^{1,0}(M)$.  For $u,v\in\Lambda^{0,1}(M)$, we have
  \begin{equation}
  \label{eq:tau_inner_product_1}
    \left<\tau_\nabla u,\tau_\nabla v\right>=\frac{1}{2}\left<(\bar\nabla^T-\bar\nabla)u,(\bar\nabla^T-\bar\nabla)v\right>.
  \end{equation}
\end{lem}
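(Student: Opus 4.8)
The plan is to reduce the identity \eqref{eq:tau_inner_product_1} to a pointwise computation in a local frame and exploit the formula \eqref{eq:nabla_T_difference}, which already expresses the difference $(\bar\nabla^T-\bar\nabla)u$ in terms of the torsion tensor. Fix a point $P\in M$ and a basis $\{W_j\}_{1\leq j\leq n}$ for $T^{1,0}(U)$ near $P$ with dual basis $\{\theta^j\}$; since both sides of \eqref{eq:tau_inner_product_1} are independent of the choice of frame, I may as well assume the frame is orthonormal at $P$, so that $g^{\bar j k}=\delta_{jk}$ there and the inner products become honest sums of squares. The right-hand side then becomes, by \eqref{eq:nabla_T_difference},
\[
  \frac12\sum_{k=1}^n\left<(\bar\nabla^T-\bar\nabla)_{\bar W_k}u,(\bar\nabla^T-\bar\nabla)_{\bar W_k}v\right>
  =\frac12\sum_{j,k=1}^n u(T_\nabla(\bar W_k,\bar W_j))\,\overline{v(T_\nabla(\bar W_k,\bar W_j))}.
\]
By \eqref{eq:torsion_vanishing} and conjugation, $T_\nabla(\bar W_k,\bar W_j)$ is a section of $\overline{T^{1,0}(U)}$, and since $u,v\in\Lambda^{0,1}$ these pairings make sense; expanding $T_\nabla(\bar W_k,\bar W_j)=\sum_\ell \overline{\theta^\ell(T_\nabla(W_k,W_j))}\,\bar W_\ell$ gives the right-hand side entirely in terms of the torsion coefficients $T^\ell_{kj}:=\theta^\ell(T_\nabla(W_k,W_j))$ and the components of $u,v$.

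On the left-hand side, I would compute $\tau_\nabla u$ using the defining relation \eqref{eq:tau_defn}. Writing $u=\sum_j u_{\bar j}\,\bar\theta^j$, the definition gives $(\tau_\nabla u)(X,Y)=u(T_\nabla(X,Y))$; since $\tau_\nabla$ maps $\Lambda^{0,1}$ to $\Lambda^{0,2}$ (the conjugate of the statement that $\tau_\nabla:\Lambda^{1,0}\to\Lambda^{2,0}$, which follows from \eqref{eq:torsion_vanishing}), only the $T_\nabla(\bar W_k,\bar W_j)$ components enter, and $\tau_\nabla u=\sum_{k<j}\bigl(\sum_\ell \overline{T^\ell_{kj}}\,u_{\bar\ell}\bigr)\bar\theta^k\wedge\bar\theta^j$. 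Pairing $\tau_\nabla u$ with $\tau_\nabla v$ in the induced inner product on $\Lambda^{0,2}$ — where $\{\bar\theta^k\wedge\bar\theta^j\}_{k<j}$ is orthonormal at $P$ — produces $\sum_{k<j}\bigl(\sum_\ell\overline{T^\ell_{kj}}u_{\bar\ell}\bigr)\overline{\bigl(\sum_m\overline{T^m_{kj}}v_{\bar m}\bigr)}$. Resumming over all ordered pairs $(k,j)$ instead of $k<j$ (using skew-symmetry of $T^\ell_{kj}$ in $k,j$) introduces the factor $\tfrac12$, and the result matches the right-hand side term by term.

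The main obstacle — really the only subtlety — is bookkeeping the conventions: making sure the inner product on $\Lambda^{0,2}$ induced by the metric assigns the correct normalization to the wedge products $\bar\theta^k\wedge\bar\theta^j$ (there is a standard factor coming from whether one sums over ordered or unordered index pairs), and tracking that the musical/conjugation conventions used in \eqref{eq:tau_defn} and \eqref{eq:nabla_T_inner_product} are compatible. Once the point $P$ is fixed and an orthonormal frame chosen, everything is a finite-dimensional linear-algebra identity in the torsion coefficients, and the factor $\tfrac12$ is exactly the discrepancy between "sum over $k<j$" on the form side and "sum over all $k$" on the connection side. I would present the argument by first recording the pointwise expansions of both sides in an orthonormal frame, then observing that the two expressions coincide after this reindexing, and finally invoking the frame-independence of both sides to conclude.
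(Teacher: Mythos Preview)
Your proposal is correct and follows essentially the same approach as the paper: both arguments pass to a local orthonormal frame, compute $\tau_\nabla u$ as a sum over $k<j$ of $u(T_\nabla(\bar W_k,\bar W_j))\,\bar\theta^k\wedge\bar\theta^j$, use skew-symmetry to rewrite the resulting inner product as $\tfrac12\sum_{j,k}u(T_\nabla(\bar W_j,\bar W_k))\overline{v(T_\nabla(\bar W_j,\bar W_k))}$, and then invoke \eqref{eq:nabla_T_difference} and \eqref{eq:nabla_T_inner_product} to identify this with the right-hand side. Your additional expansion in torsion coefficients $T^\ell_{kj}$ is not needed but does no harm.
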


\begin{proof}
For $U\subset M$, let $\{W_j\}_{1\leq j\leq n}$ be a basis for $T^{1,0}(U)$ and let $\{\theta^j\}_{1\leq j\leq n}$ be the dual basis for $\Lambda^{1,0}(U)$.  In this moving coordinate frame, we have
\[
  \tau_\nabla u=\sum_{j=1}^{n-1}\sum_{k=j+1}^n u(T_\nabla(\bar W_j,\bar W_k))\bar\theta^j\wedge\bar\theta^k
\]
for any $u\in\Lambda^{0,1}(U)$.  Since \eqref{eq:tau_inner_product_1} is independent of the choice of moving coordinate frame, we may assume that $\{W_j\}_{1\leq j\leq n}$ is an orthonormal basis for $T^{1,0}(U)$.  In this case, for any $u,v\in\Lambda^{0,1}(U)$ we have
\[
  \left<\tau_\nabla u,\tau_\nabla v\right>=\sum_{j=1}^{n-1}\sum_{k=j+1}^n u(T_\nabla(\bar W_j,\bar W_k))\overline{v(T_\nabla(\bar W_j,\bar W_k))}.
\]
Using the skew-symmetry of $T_\nabla$, we may rewrite this in the form
\begin{equation}
\label{eq:tau_inner_product_local}
  \left<\tau_\nabla u,\tau_\nabla v\right>=\frac{1}{2}\sum_{j,k=1}^n u(T_\nabla(\bar W_j,\bar W_k))\overline{v(T_\nabla(\bar W_j,\bar W_k))}.
\end{equation}
Combining \eqref{eq:tau_inner_product_local} with \eqref{eq:nabla_T_difference} and \eqref{eq:nabla_T_inner_product}, we have \eqref{eq:tau_inner_product_1}.
\end{proof}

\section{The Basic Estimate}

\label{sec:basic_estimate}

Before stating the basic estimate, we will need some useful notation.  Let $U\subset M$ admit a basis $\{W_j\}_{1\leq j\leq n}$ for $T^{1,0}(U)$ and a dual basis $\{\theta^j\}_{1\leq j\leq n}$ for $\Lambda^{1,0}(U)$.  For any $\Theta\in\Lambda^{1,1}(U)$ satisfying $\overline{\Theta(Z,\bar W)}=\Theta(W,\bar Z)$ for all $Z,W\in T^{1,0}(U)$, we define a self-adjoint endomorphism on $\Lambda^{0,1}(U)$ by 
\begin{equation}
\label{eq:Theta_operator_defn}
  i\Theta u=\sum_{j,k=1}^n(\Theta(W_k,\bar W_j))\left<u,\bar\theta^k\right>\bar\theta^j.
\end{equation}

The following identity is well-known on K\"ahler manifolds: see Proposition 2.4 in \cite{Str10} for twisted estimates in $\mathbb{C}^n$ and Theorem 3.1 in \cite{McVa15} for twisted estimates in K\"ahler manifolds (with the correction that $\varphi$ should be $\psi$ in (12)).  Both of these references provide extensive history and applications for twisted estimates in the $L^2$-theory for $\dbar$.  The non-K\"ahler case has been studied by Griffiths ((7.14) in \cite{Gri66}) and Demailly ((3.1) in \cite{Dem86} or VII-(2.1) in \cite{Dem12}), but these authors do not use a twist factor.

\begin{prop}[Twisted Bochner-Kodaira-Morrey-Kohn-H\"ormander]
\label{prop:twisted_BKMKH}
  Let $\Omega\subset M$ be a relatively compact domain with $C^3$ boundary, and let $\rho\in C^3(M)$ be a defining function for $\Omega$.  Let $\psi,\kappa\in C^2(\overline\Omega)$ satisfy $\inf_\Omega\kappa>0$.  Then for all $u\in C^2_{0,1}(\overline\Omega)\cap\dom\dbar^*$, we have
  \begin{multline}
  \label{eq:Q_identity}
    \norm{\sqrt{\kappa}\dbar u}^2_{L^2(\Omega,\psi)}+\norm{\sqrt{\kappa}\dbar^*_\psi u}^2_{L^2(\Omega,\psi)}=\norm{\sqrt{\kappa}\bar\nabla^T u}^2_{L^2(\Omega,\psi)}-\norm{\sqrt{\kappa}\tau_\nabla u}^2_{L^2(\Omega,\psi)}\\
    +2\re\left(\dbar^*_\psi u,\left<u,\dbar\kappa\right>\right)_{L^2(\Omega,\psi)}
    +\left(\left(i\kappa\Theta_M+i\kappa\ddbar\psi-i\ddbar\kappa\right)u,u\right)_{L^2(\Omega,\psi)}\\
    +\left(|\nabla\rho|^{-1}\kappa(i\ddbar\rho)u,u\right)_{L^2(\partial\Omega,\psi)}.
  \end{multline}
\end{prop}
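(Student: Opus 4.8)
The plan is to derive \eqref{eq:Q_identity} by a standard integration-by-parts argument adapted to the Chern connection, following the template of the Bochner--Kodaira--Morrey--Kohn--H\"ormander identity but keeping careful track of torsion terms via the modified connection $\nabla^T$. First I would fix a point $P\in\overline\Omega$ and work in a local frame $\{W_j\}$ for $T^{1,0}$ that is orthonormal at $P$ and (when $P\in\partial\Omega$) adapted to the boundary, writing $u=\sum_j u_j\bar\theta^j$. The operators $\dbar^*_\psi$ and $\dbar$ are expressed in this frame using \eqref{eq:adjoint_bar_Z_weighted}: $\dbar^*_\psi u$ is, up to lower-order terms involving $\Div W_j$ and $W_j\psi$, the sum $-\sum_j W_j u_j$, and $\dbar u$ picks out the antisymmetrized derivatives $\bar W_j u_k - \bar W_k u_j$ (again modulo connection coefficients). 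The strategy is to compute $\norm{\sqrt\kappa\,\dbar u}^2 + \norm{\sqrt\kappa\,\dbar^*_\psi u}^2$ by expanding both squares, integrating by parts to move one $\bar W_j$ or $W_j$ past the other, and collecting: the ``diagonal'' second-order terms reassemble into $\norm{\sqrt\kappa\,\bar\nabla^T u}^2$ once the torsion correction \eqref{eq:nabla_T_difference} is absorbed, and the price of that absorption is precisely the $-\norm{\sqrt\kappa\,\tau_\nabla u}^2$ term by \eqref{eq:tau_inner_product_1}.

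Next, the commutator terms. Swapping the order of $\bar W_j$ and $(\bar W_k)^*_\psi$ produces $[\bar W_k,(\bar W_j)^*_\psi]$, which by \eqref{eq:commutation_relation_Ricci} equals $(\Theta_M+\ddbar\psi)(W_j,\bar W_k)$; summed against $u_j\bar u_k$ and weighted by $\kappa$ this is exactly $\big(i\kappa\Theta_M u + i\kappa\ddbar\psi\,u,\,u\big)$ in the notation \eqref{eq:Theta_operator_defn}. The terms where a derivative lands on $\kappa$ itself split into two pieces: one combines with $\dbar^*_\psi u$ to give $2\re(\dbar^*_\psi u,\langle u,\dbar\kappa\rangle)$, and the second derivative of $\kappa$ contributes $-(i\ddbar\kappa\, u,u)$ after another integration by parts (and the first derivatives of $\kappa$ paired with first derivatives of $u$ cancel in the real part against the conjugate term). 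The remaining work is the boundary contribution: whenever we integrate by parts in a $\bar W_j$ or $W_j$ direction there is a boundary integral over $\partial\Omega$ weighted by (a component of) $d\rho$; because $u\in\dom\dbar^*$ forces the complex-normal component of $u$ to vanish on $\partial\Omega$, all such boundary terms involving a single derivative of $u$ drop out, and the surviving boundary term is the one coming from commuting two tangential derivatives against each other, which by the standard computation (using $\bar W_j u_k$ on the boundary and the vanishing of the normal component) collects into $(|\nabla\rho|^{-1}\kappa\,(i\ddbar\rho)u,u)_{L^2(\partial\Omega,\psi)}$; the $C^3$ hypotheses on $\rho$ and the domain guarantee this Levi-form boundary term is well-defined and the integrations by parts are justified.

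The main obstacle I expect is bookkeeping the torsion consistently: in a non-K\"ahler frame the connection coefficients $\nabla_{W_j}W_k$ and $\nabla_{\bar W_j}W_k$ do not behave as in the K\"ahler case, so one must be disciplined about which derivatives are $\nabla$, which are $\nabla^T$, and which are bare vector fields, invoking \eqref{eq:torsion_vanishing} (so that $\nabla_{\bar W_j}W_k$ involves only the Chern connection's $(1,0)$-part) and \eqref{eq:divergence_Chern_T} at each step. The cleanest route is probably to first establish the identity at the point $P$ with the frame orthonormal there, so that all first derivatives of the metric vanish at $P$ and only the genuinely invariant objects ($\Theta_M$, $\ddbar\psi$, $\ddbar\kappa$, $\tau_\nabla$, $i\ddbar\rho$) survive, then note that both sides of \eqref{eq:Q_identity} are coordinate-independent and integrate over $\Omega$. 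A secondary subtlety is confirming that the $\norm{\bar\nabla^T u}^2$ grouping is the right one — i.e.\ that it is $\nabla^T$ and not $\nabla$ that appears — which is exactly the content of writing $\dbar^*_\psi$ via \eqref{eq:divergence_Chern_T} and then recognizing the leftover torsion cross-terms as $-\norm{\tau_\nabla u}^2$ through \eqref{eq:tau_inner_product_1}; once that identification is made the rest is the classical argument.
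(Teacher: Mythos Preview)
Your overall strategy matches the paper's: express $\dbar^*_\psi u$ and $\dbar u$ in a local frame, integrate by parts in the $\dbar^*_\psi$ term twice (the first time with no boundary term because $v\in\dom\dbar^*$, the second producing the Levi-form boundary term via the Morrey trick), invoke \eqref{eq:commutation_relation_Ricci} for the curvature, and recognize the remaining first-order terms as $\norm{\sqrt\kappa\,\bar\nabla^T u}^2-\norm{\sqrt\kappa\,\tau_\nabla u}^2$ via \eqref{eq:tau_inner_product_1}. The paper organizes this as a bilinear form $Q_\psi(u,v)$ and treats the $\dbar$ contribution purely pointwise (no integration by parts there; see \eqref{eq:dbar_inner_product}), but structurally your outline is the same argument.

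One genuine caution: your proposed ``cleanest route'' of choosing a frame orthonormal at $P$ \emph{with all first derivatives of the metric vanishing at $P$} is a K\"ahler-only device. In a non-K\"ahler Hermitian manifold such normal coordinates do not exist; the obstruction is exactly the torsion you are trying to track, since vanishing of $\partial g_{j\bar k}$ at $P$ forces $T_\nabla=0$ at $P$. The paper sidesteps this by working throughout in a \emph{holomorphic} frame $\{W_j\}$ (not orthonormal), which is what makes \eqref{eq:commutation_relation_Ricci} directly applicable and keeps the $\dbar$ computation algebraic. If you insist on an orthonormal frame you lose holomorphicity and must carry the Lie brackets $[W_j,\bar W_k]$ by hand; if you insist on holomorphicity you must keep the metric coefficients $g^{\bar jk}$ in every inner product. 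Either works, but drop the claim that first derivatives of the metric can be killed---that shortcut is unavailable here and would erase precisely the $\tau_\nabla$ term you need to see.
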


\begin{proof}
Let $U\subset M$ be an open set on which there exists a basis of holomorphic sections $\{W_j\}_{1\leq j\leq n}\subset T^{1,0}(U)$ with dual basis $\{\theta^j\}_{1\leq j\leq n}\subset\Lambda^{1,0}(U)$.  Assume that $U\cap\Omega\neq\emptyset$.  For $\theta\in C^1_{1,0}(\overline\Omega)$ supported in $U\cap\overline\Omega$, we may use the invariant definition of exterior derivative to check that
\begin{align*}
  \dbar\bar\theta&=\sum_{j=1}^{n-1}\sum_{k=j+1}^n\dbar\bar\theta(\bar W_j,\bar W_k)\bar\theta^j\wedge\bar\theta^k\\
  &=\sum_{j=1}^{n-1}\sum_{k=j+1}^n(\bar W_j(\bar\theta(\bar W_k))-\bar W_k(\bar\theta(\bar W_j))-\bar\theta([\bar W_j,\bar W_k]))\bar\theta^j\wedge\bar\theta^k,
\end{align*}
and so \eqref{eq:nabla_T_forms} gives us
\begin{align*}
  \dbar\bar\theta&=\sum_{j=1}^{n-1}\sum_{k=j+1}^n((\nabla_{\bar W_j}\bar\theta)(\bar W_k)-(\nabla_{\bar W_k}\bar\theta)(\bar W_j)+\bar\theta(T_\nabla(\bar W_j,\bar W_k)))\bar\theta^j\wedge\bar\theta^k\\
  &=\frac{1}{2}\sum_{j=1}^n\bar\theta^j\wedge\left(\nabla_{\bar W_j}+\nabla_{\bar W_j}^T\right)\bar\theta.
\end{align*}
Since $\nabla$ and $\nabla^T$ are both derivations, for any $1\leq q\leq n$ we have
\begin{equation}
\label{eq:dbar_formula}
  \dbar u=\frac{1}{2}\sum_{j=1}^n\bar\theta^j\wedge\left(\nabla_{\bar W_j}+\nabla_{\bar W_j}^T\right)u,
\end{equation}
where $u\in C^1_{0,q}(\overline\Omega)$ is supported in $U\cap\overline{\Omega}$.

Now we fix $u,v\in C^2_{0,1}(\overline\Omega)\cap\dom\dbar^*$ supported in $U\cap\overline{\Omega}$.  We define the Dirichlet form
\begin{equation}
\label{eq:Dirichlet_form}
  Q_\psi(u,v)=\left(\kappa\dbar u,\dbar v\right)_{L^2(\Omega,\psi)}+\left(\kappa\dbar^*_\psi u,\dbar^*_\psi v\right)_{L^2(\Omega,\psi)}.
\end{equation}
Since we have a moving coordinate frame on $U$, we write $u^j=\left<u,\bar\theta^j\right>$ and $v^j=\left<v,\bar\theta^j\right>$ for all $1\leq j\leq n$.  With this notation, we may compute
\begin{equation}
\label{eq:dbar_adjoint_formula}
  \dbar^*_\psi u=\sum_{j=1}^n(\bar W_j)^*_\psi u^j.
\end{equation}

To evaluate the second term in \eqref{eq:Dirichlet_form}, we use \eqref{eq:dbar_adjoint_formula} to obtain
\[
  \left(\kappa\dbar^*_\psi u,\dbar^*_\psi v\right)_{L^2(\Omega,\psi)}=\sum_{j,k=1}^n\left(\kappa(\bar W_k)^*_\psi u^k,(\bar W_j)^*_\psi v^j\right)_{L^2(\Omega,\psi)}.
\]
Since $v\in\dom\dbar^*$, $\left<v,\dbar\rho\right>\equiv 0$ on $\partial\Omega$, so we may integrate by parts with no boundary term and obtain
\[
  \left(\kappa\dbar^*_\psi u,\dbar^*_\psi v\right)_{L^2(\Omega,\psi)}=\left(\dbar^*_\psi u,\left<v,\dbar\kappa\right>\right)_{L^2(\Omega,\psi)}
  +\sum_{j,k=1}^n\left(\kappa\bar W_j(\bar W_k)^*_\psi u^k,v^j\right)_{L^2(\Omega,\psi)}.
\]
We may use \eqref{eq:commutation_relation_Ricci} to commute the differential operators and, using the notation introduced in \eqref{eq:Theta_operator_defn}, we have
\begin{align*}
  \left(\kappa\dbar^*_\psi u,\dbar^*_\psi v\right)_{L^2(\Omega,\psi)}&=\left(\dbar^*_\psi u,\left<v,\dbar\kappa\right>\right)_{L^2(\Omega,\psi)}+\left(\kappa\left(i\Theta_M+i\ddbar\psi\right)u,v\right)_{L^2(\Omega,\psi)}\\
  &\qquad+\sum_{j,k=1}^n\left(\kappa(\bar W_k)^*_\psi\bar W_j u^k,v^j\right)_{L^2(\Omega,\psi)}.
\end{align*}
This time, if we integrate by parts with the remaining adjoint differential operator, we will need to introduce a boundary term:
\begin{multline*}
  \left(\kappa\dbar^*_\psi u,\dbar^*_\psi v\right)_{L^2(\Omega,\psi)}=\left(\dbar^*_\psi u,\left<v,\dbar\kappa\right>\right)_{L^2(\Omega,\psi)}+\left(\kappa\left(i\Theta_M+i\ddbar\psi\right)u,v\right)_{L^2(\Omega,\psi)}\\
  +\sum_{j,k=1}^n\left((W_k\kappa)\bar W_j u^k,v^j\right)_{L^2(\Omega,\psi)}+\sum_{j,k=1}^n\left(\kappa\bar W_j u^k,\bar W_k v^j\right)_{L^2(\Omega,\psi)}\\
  -\sum_{j,k=1}^n\left(|\nabla\rho|^{-1}(W_k\rho)\kappa\bar W_j u^k,v^j\right)_{L^2(\partial\Omega,\psi)}.
\end{multline*}
Since $u,v\in\dom\dbar^*$, the standard Morrey trick for rewriting the boundary integral (see (4.3.8)-(4.3.9) in \cite{ChSh01} or the discussion of (2.32) in \cite{Str10}) and the notation from \eqref{eq:Theta_operator_defn} give us
\begin{multline*}
  \left(\kappa\dbar^*_\psi u,\dbar^*_\psi v\right)_{L^2(\Omega,\psi)}=\left(\dbar^*_\psi u,\left<v,\dbar\kappa\right>\right)_{L^2(\Omega,\psi)}+\left(\kappa\left(i\Theta_M+i\ddbar\psi\right)u,v\right)_{L^2(\Omega,\psi)}\\
  +\sum_{j,k=1}^n\left((W_k\kappa)\bar W_j u^k,v^j\right)_{L^2(\Omega,\psi)}+\sum_{j,k=1}^n\left(\kappa\bar W_j u^k,\bar W_k v^j\right)_{L^2(\Omega,\psi)}\\
  +\left(|\nabla\rho|^{-1}\kappa(i\ddbar\rho)u,v\right)_{L^2(\partial\Omega,\psi)}.
\end{multline*}
Finally, we integrate the $\bar W_j$ by parts in the first term on the second line and use \eqref{eq:Theta_operator_defn} and \eqref{eq:dbar_adjoint_formula} to simplify the resulting terms, so that we have
\begin{multline}
\label{eq:dbar_adjoint_inner_product}
  \left(\kappa\dbar^*_\psi u,\dbar^*_\psi v\right)_{L^2(\Omega,\psi)}=\left(\dbar^*_\psi u,\left<v,\dbar\kappa\right>\right)_{L^2(\Omega,\psi)}+\left(\left<u,\dbar\kappa\right>,\dbar^*_\psi v\right)_{L^2(\Omega,\psi)}\\
  +\left(\left(i\kappa\Theta_M+i\kappa\ddbar\psi-i\ddbar\kappa\right)u,v\right)_{L^2(\Omega,\psi)}
  +\sum_{j,k=1}^n\left(\kappa\bar W_j u^k,\bar W_k v^j\right)_{L^2(\Omega,\psi)}\\
  +\left(|\nabla\rho|^{-1}\kappa(i\ddbar\rho)u,v\right)_{L^2(\partial\Omega,\psi)}.
\end{multline}

To evaluate the first term in \eqref{eq:Dirichlet_form}, we will not need to integrate by parts, so it will suffice to consider pointwise computations.  We use \eqref{eq:dbar_formula} to obtain
\[
  \left<\dbar u,\dbar v\right>=\frac{1}{4}\sum_{j,k=1}^n\left<\bar\theta^j\wedge\left(\nabla_{\bar W_j}+\nabla^T_{\bar W_j}\right)u,\bar\theta^k\wedge\left(\nabla_{\bar W_k}+\nabla^T_{\bar W_k}\right)v\right>.
\]
We may use the definition of the inner product for $2$-forms and the notation introduced in \eqref{eq:nabla_T_inner_product} to compute
\begin{multline}
\label{eq:dbar_inner_product_1}
  \left<\dbar u,\dbar v\right>=\frac{1}{4}\left<(\bar\nabla+\bar\nabla^T)u,(\bar\nabla+\bar\nabla^T)v\right>\\
  -\frac{1}{4}\sum_{j,k=1}^n\left<\left<\left(\nabla_{\bar W_j}+\nabla^T_{\bar W_j}\right)u,\bar\theta^k\right>,\left<\left(\nabla_{\bar W_k}+\nabla^T_{\bar W_k}\right)v,\bar\theta^j\right>\right>.
\end{multline}
To evaluate the first term on the right-hand side of \eqref{eq:dbar_inner_product_1}, we use \eqref{eq:tau_inner_product_1} to write
\begin{multline}
\label{eq:dbar_inner_product_1a}
  \frac{1}{4}\left<(\bar\nabla+\bar\nabla^T)u,(\bar\nabla+\bar\nabla^T)v\right>=\left<\bar\nabla^T u,\bar\nabla^T v\right>\\
  -\frac{1}{2}\sum_{j=1}^n\left<\bar\nabla u,(\bar\nabla^T-\bar\nabla)v\right>-\frac{1}{2}\left<(\bar\nabla^T-\bar\nabla)u,\bar\nabla v\right>
  -\frac{3}{2}\left<\tau_\nabla u,\tau_\nabla v\right>.
\end{multline}
To evaluate the second term on the right-hand side of \eqref{eq:dbar_inner_product_1}, we observe that since $W_k$ is holomorphic, $\theta^k$ must also be holomorphic for all $1\leq k\leq n$, so we have
\[
  \frac{1}{2}\left<\left(\nabla_{\bar W_j}+\nabla^T_{\bar W_j}\right)u,\bar\theta^k\right>=\bar W_j u^k+\frac{1}{2}\left<\left(\nabla^T_{\bar W_j}-\nabla_{\bar W_j}\right)u,\bar\theta^k\right>,
\]
and then \eqref{eq:nabla_T_difference} gives us
\[
  \frac{1}{2}\left<\left(\nabla_{\bar W_j}+\nabla^T_{\bar W_j}\right)u,\bar\theta^k\right>=\bar W_j u^k+\frac{1}{2}\sum_{\ell=1}^n u(T_\nabla(\bar W_j,\bar W_\ell))g^{\bar\ell k}.
\]
Expanding the corresponding inner product, we have
\begin{multline}
\label{eq:dbar_inner_product_1b_temp}
  \frac{1}{4}\sum_{j,k=1}^n\left<\left<\left(\nabla_{\bar W_j}+\nabla^T_{\bar W_j}\right)u,\bar\theta^k\right>,\left<\left(\nabla_{\bar W_k}+\nabla^T_{\bar W_k}\right)v,\bar\theta^j\right>\right>=\\
  \sum_{j,k=1}^n\left<\bar W_j u^k,\bar W_k v^j\right>
  +\frac{1}{2}\sum_{j,k,\ell=1}^n\left<\bar W_j u^k,v(T_\nabla(\bar W_k,\bar W_\ell))g^{\bar\ell j}\right>\\
  +\frac{1}{2}\sum_{j,k,\ell=1}^n\left<u(T_\nabla(\bar W_j,\bar W_\ell))g^{\bar\ell k},\bar W_k v^j\right>
  -\frac{1}{2}\left<\tau_\nabla u,\tau_\nabla v\right>.
\end{multline}
Compare \eqref{eq:tau_inner_product_local} for the justification of the coefficient of $\frac{1}{2}$ in the final term, and note that we have used the skew-symmetry of $T_\nabla$ to compute the sign of the final term.  By \eqref{eq:nabla_T_difference} and \eqref{eq:nabla_T_inner_product} we have
\[
  \left<\bar\nabla u,(\bar\nabla^T-\bar\nabla)v\right>=\sum_{j,k,\ell=1}^n g^{\bar j\ell}\left<\bar W_j u^k,v(T_\nabla(\bar W_\ell,\bar W_k))\right>,
\]
so we may use the skew-symmetry of $T_\nabla$ to substitute this in \eqref{eq:dbar_inner_product_1b_temp} and obtain
\begin{multline}
\label{eq:dbar_inner_product_1b}
  \frac{1}{4}\sum_{j,k=1}^n\left<\left<\left(\nabla_{\bar W_j}+\nabla^T_{\bar W_j}\right)u,\bar\theta^k\right>,\left<\left(\nabla_{\bar W_k}+\nabla^T_{\bar W_k}\right)v,\bar\theta^j\right>\right>=\\
  \sum_{j,k=1}^n\left<\bar W_j u^k,\bar W_k v^j\right>
  -\frac{1}{2}\sum_{j,k=1}^n\left<\bar\nabla u,(\bar\nabla^T-\bar\nabla)v\right>
  -\frac{1}{2}\sum_{j,k=1}^n\left<(\bar\nabla^T-\bar\nabla)u,\bar\nabla v\right>\\
  -\frac{1}{2}\left<\tau_\nabla u,\tau_\nabla v\right>.
\end{multline}
Now, we may substitute \eqref{eq:dbar_inner_product_1a} and \eqref{eq:dbar_inner_product_1b} in \eqref{eq:dbar_inner_product_1} to obtain
\begin{equation}
\label{eq:dbar_inner_product}
  \left<\dbar u,\dbar v\right>=\left<\bar\nabla^T u,\bar\nabla^T v\right>
  -\left<\tau_\nabla u,\tau_\nabla v\right>
  -\sum_{j,k=1}^n\left<\bar W_j u^k,\bar W_k v^j\right>.
\end{equation}

Finally, we substitute \eqref{eq:dbar_inner_product} and \eqref{eq:dbar_adjoint_inner_product} into \eqref{eq:Dirichlet_form} to obtain
\begin{multline}
\label{eq:Q_inner_product}
  Q_\psi(u,v)=\left(\kappa\bar\nabla^T u,\bar\nabla^T v\right)_{L^2(\Omega,\psi)}-\left(\kappa\tau_\nabla u,\tau_\nabla v\right)_{L^2(\Omega,\psi)}\\
  +\left(\dbar^*_\psi u,\left<v,\dbar\kappa\right>\right)_{L^2(\Omega,\psi)}
  +\left(\left<u,\dbar\kappa\right>,\dbar^*_\psi v\right)_{L^2(\Omega,\psi)}\\
  +\left(\left(i\kappa\Theta_M+i\kappa\ddbar\psi-i\ddbar\kappa\right)u,v\right)_{L^2(\Omega,\psi)}
  +\left(|\nabla\rho|^{-1}\kappa(i\ddbar\rho)u,v\right)_{L^2(\partial\Omega,\psi)}.
\end{multline}
Since every term in \eqref{eq:Q_inner_product} is independent of the choice of local coordinates, we may use a partition of unity to prove that \eqref{eq:Q_inner_product} holds for all $u,v\in C^2_{0,1}(\overline\Omega)\cap\dom\dbar^*$.  Now, \eqref{eq:Q_identity} follows from \eqref{eq:Q_inner_product} by setting $v=u$.
\end{proof}

\begin{cor}
  Let $\Omega\subset M$ be a relatively compact pseudoconvex domain with $C^3$ boundary.  Let $\psi,\kappa\in C^2(\overline\Omega)$ satisfy $\inf_\Omega\kappa>0$.  Then for all $u\in L^2_{0,1}(\Omega,\psi)\cap\dom\dbar\cap\dom\dbar^*_\psi$, we have
  \begin{multline}
  \label{eq:Q_estimate}
    \norm{\sqrt{\kappa}\dbar u}^2_{L^2(\Omega,\psi)}+\norm{\sqrt{\kappa}\dbar^*_\psi u}^2_{L^2(\Omega,\psi)}\geq-\norm{\sqrt{\kappa}\tau_\nabla u}^2_{L^2(\Omega,\psi)}\\
    +2\re\left(\dbar^*_\psi u,\left<u,\dbar\kappa\right>\right)_{L^2(\Omega,\psi)}
    +\left(\left(i\kappa\Theta_M+i\kappa\ddbar\psi-i\ddbar\kappa\right)u,u\right)_{L^2(\Omega,\psi)}.
  \end{multline}
\end{cor}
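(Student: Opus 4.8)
The plan is to read off \eqref{eq:Q_estimate} from the identity \eqref{eq:Q_identity} of Proposition~\ref{prop:twisted_BKMKH} by discarding the two nonnegative terms on its right-hand side, and then to remove the $C^2$-smoothness hypothesis on $u$ by a density argument.

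First I would fix a defining function $\rho\in C^3(M)$ for $\Omega$ and apply Proposition~\ref{prop:twisted_BKMKH} with $v=u$ for an arbitrary $u\in C^2_{0,1}(\overline\Omega)\cap\dom\dbar^*$. In \eqref{eq:Q_identity} the gradient term $\norm{\sqrt{\kappa}\bar\nabla^T u}^2_{L^2(\Omega,\psi)}$ is manifestly nonnegative. For the boundary term, the condition $u\in\dom\dbar^*$ forces $\left<u,\dbar\rho\right>\equiv 0$ on $\partial\Omega$, so at each boundary point $u$ lies in the complex tangent space, where $i\ddbar\rho$ is nonnegative because $\Omega$ is pseudoconvex (this is the Levi form evaluated against $\rho$, exactly as in the Morrey trick used in the proof of the Proposition); since $\kappa>0$ and $\abs{\nabla\rho}^{-1}>0$, the term $\left(\abs{\nabla\rho}^{-1}\kappa(i\ddbar\rho)u,u\right)_{L^2(\partial\Omega,\psi)}$ is nonnegative as well. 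Dropping both terms gives \eqref{eq:Q_estimate} for every $u\in C^2_{0,1}(\overline\Omega)\cap\dom\dbar^*$.

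Next I would pass to general $u\in L^2_{0,1}(\Omega,\psi)\cap\dom\dbar\cap\dom\dbar^*_\psi$. Because $\overline\Omega$ is compact and $\psi\in C^2$, the weight $e^{-\psi}$ is bounded above and below, so the weighted and unweighted $L^2$ norms, and the corresponding graph norms of $\dbar^*$ and $\dbar^*_\psi$, are equivalent (and the two operators have the same domain, since the boundary condition $\left<u,\dbar\rho\right>=0$ does not depend on $\psi$). The classical density lemma for the $\dbar$-complex on domains with $C^2$ boundary (Friedrichs mollification combined with a boundary-preserving cutoff; see \cite{ChSh01}) then shows that $C^2_{0,1}(\overline\Omega)\cap\dom\dbar^*$ is dense in $\dom\dbar\cap\dom\dbar^*_\psi$ for the graph norm $u\mapsto\norm{u}_{L^2(\Omega,\psi)}+\norm{\dbar u}_{L^2(\Omega,\psi)}+\norm{\dbar^*_\psi u}_{L^2(\Omega,\psi)}$. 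Every term appearing in \eqref{eq:Q_estimate} is continuous for this graph norm: the left-hand side is continuous because $\kappa$ is bounded; $\norm{\sqrt{\kappa}\tau_\nabla u}^2_{L^2(\Omega,\psi)}$ and $\left(\left(i\kappa\Theta_M+i\kappa\ddbar\psi-i\ddbar\kappa\right)u,u\right)_{L^2(\Omega,\psi)}$ are continuous already in $\norm{u}_{L^2(\Omega,\psi)}$, since $\tau_\nabla$ and the indicated self-adjoint endomorphism are bounded bundle maps over $\overline\Omega$; and $2\re\left(\dbar^*_\psi u,\left<u,\dbar\kappa\right>\right)_{L^2(\Omega,\psi)}$ is bounded by $2\norm{\dbar\kappa}_{L^\infty(\Omega)}\norm{\dbar^*_\psi u}_{L^2(\Omega,\psi)}\norm{u}_{L^2(\Omega,\psi)}$ and depends bilinearly on $(\dbar^*_\psi u,u)$, hence is continuous in the graph norm. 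Passing to the limit yields \eqref{eq:Q_estimate} in general.

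The main obstacle is the density step: one must approximate an arbitrary element of $\dom\dbar\cap\dom\dbar^*_\psi$ by $C^2$ forms that still satisfy the boundary condition $\left<u,\dbar\rho\right>=0$ while converging in the graph norm. On a $C^2$ (a fortiori $C^3$) boundary this is the standard density result underlying the $\dbar$-Neumann theory; its proof exploits that the $\dbar^*$ boundary condition is of first order, so a tangential mollification adapted to the boundary preserves it up to an error controlled in $L^2$. Everything else is routine bookkeeping.
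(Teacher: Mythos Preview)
Your proposal is correct and mirrors the paper's proof exactly: drop the nonnegative $\bar\nabla^T$ term and the nonnegative boundary term in \eqref{eq:Q_identity} (the latter by pseudoconvexity), then invoke the standard density lemma (the paper cites Lemma~4.3.2 in \cite{ChSh01} or Proposition~2.3 in \cite{Str10}) to pass from $C^2_{0,1}(\overline\Omega)\cap\dom\dbar^*$ to $\dom\dbar\cap\dom\dbar^*_\psi$. Your explicit verification that each term in \eqref{eq:Q_estimate} is continuous for the graph norm is a welcome elaboration that the paper leaves implicit.
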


\begin{proof}
  When $u\in C^2_{0,1}(\overline\Omega)\cap\dom\dbar^*$, \eqref{eq:Q_estimate} follows immediately from \eqref{eq:Q_identity} because the $\bar\nabla^T$ term is non-negative and the boundary term is also non-negative when $\Omega$ is pseudoconvex.  When $\partial\Omega$ is $C^3$, we may use a standard density result, such as Lemma 4.3.2 in \cite{ChSh01} or Proposition 2.3 in \cite{Str10}.  Although these results are stated for $\mathbb{C}^n$, the proofs are local and easily adaptable to complex manifolds.
\end{proof}

\section{A Solution Operator}

\label{sec:solution_operator}

It has long been known that estimates for the Bergman projection could be obtained by first solving the inhomogeneous Cauchy-Riemann equations with $L^2$ estimates.  Indeed, Kohn's formula $P=I-S\dbar$, where $S$ is the $L^2$ minimal solution operator for the inhomogeneous Cauchy-Riemann equations, provides an explicit relation between the two operators.  Our key observation is that it suffices to solve \eqref{eq:solution_identity}, which is an important special case of the inhomogeneous Cauchy-Riemann equations.
\begin{prop}
\label{prop:solution_operator}
  Let $\Omega\subset M$ be a relatively compact domain with $C^3$ boundary.  Let $\psi,\kappa\in C^2(\overline{\Omega})$ satisfy
  \begin{enumerate}
    \item $\inf_{\Omega}\kappa>0$ and
    \item there exists a constant $B>0$ such that
      \begin{equation}
      \label{eq:curvature_hypothesis}
        \left<\left(i\kappa\Theta_M+i\kappa\ddbar\psi-i\ddbar\kappa\right)\bar\theta,\bar\theta\right>-\abs{\sqrt{\kappa}\tau_\nabla \bar\theta}^2\geq B\kappa^{-1}\abs{\left<\bar\theta,\dbar\kappa\right>}^2
      \end{equation}
      on $\Omega$ for all $\theta\in\Lambda^{1,0}(\Omega)$.
  \end{enumerate}
  Then for every $f\in\ker\dbar\cap L^2(\Omega,\psi)$, there exists a unique $u\in L^2(\Omega,\psi)$ such that
  \begin{enumerate}
    \item $u$ is orthogonal to $L^2(\Omega,\psi)\cap\ker\dbar$ with respect to the $L^2(\Omega,\psi)$ inner product,
    \item
    \begin{equation}
    \label{eq:solution_estimate}
      \norm{u}_{L^2(\Omega,\psi-\log\kappa)}\leq\frac{1}{\sqrt{1+B}}\norm{f}_{L^2(\Omega,\psi-\log\kappa)},
    \end{equation}
    and
    \item
    \begin{equation}
    \label{eq:solution_identity}
      \dbar u=\kappa^{-1}(f-u)\dbar\kappa
    \end{equation}
    on $\Omega$ in the distribution sense.
  \end{enumerate}
\end{prop}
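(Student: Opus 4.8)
The plan is to set up a variational/functional-analytic argument directly around the specific equation \eqref{eq:solution_identity}, rather than invoking a general solution operator for $\dbar$. The key structural observation is that \eqref{eq:solution_identity} can be rewritten as $\dbar u + \kappa^{-1}u\,\dbar\kappa = \kappa^{-1}f\,\dbar\kappa$, i.e. $\dbar(\kappa^{-1}u) \cdot \kappa = \kappa^{-1}(f-u)\dbar\kappa$ up to manipulation; more usefully, observe that the operator $v \mapsto \dbar v + \kappa^{-1}v\,\dbar\kappa$ is (a conjugated version of) $\dbar$ acting on the weighted space where $\kappa$ is absorbed. Concretely, if we set $v = \kappa^{-1} u$ then $\dbar v = -\kappa^{-2}(\dbar\kappa) u + \kappa^{-1}\dbar u$, and after substituting \eqref{eq:solution_identity} one gets $\dbar v = \kappa^{-2}(f - u - u)\dbar\kappa + \dots$ — this particular change of variables is a little delicate, so instead I would work directly. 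The cleaner route: define a sesquilinear form and use the basic estimate \eqref{eq:Q_estimate} as the coercivity input.

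\textbf{Step 1: Reduce to solving a $\dbar$-type equation via duality.} Let $f\in\ker\dbar\cap L^2(\Omega,\psi)$. I seek $u\perp(\ker\dbar\cap L^2(\Omega,\psi))$ with $\dbar u = \kappa^{-1}(f-u)\dbar\kappa$. Write $u = f - P_\psi f - w$ where... no — better to observe that the desired identity says $\dbar u = \kappa^{-1}(f-u)\dbar\kappa$, and since $f\in\ker\dbar$ the form $g := \kappa^{-1}(f-u)\dbar\kappa$ need not be $\dbar$-closed a priori, so $u$ cannot simply be a $\dbar$-solution of a fixed datum. This forces a fixed-point or variational formulation. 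I would set it up as follows: on the Hilbert space $H = L^2(\Omega,\psi)$, seek $u\in H$, $u\perp\ker\dbar$, such that for all $\phi\in C^\infty_{0,1}(\overline\Omega)\cap\dom\dbar^*_\psi$,
\[
  \bigl(u,\dbar^*_\psi\phi\bigr)_{L^2(\Omega,\psi)} = \bigl(\kappa^{-1}(f-u)\dbar\kappa,\,\phi\bigr)_{L^2(\Omega,\psi)}.
\]
Equivalently, $u$ is the $L^2(\Omega,\psi)$-minimal solution of $\dbar u = \kappa^{-1}(f-u)\dbar\kappa$, which we recast: the map $u\mapsto \kappa^{-1}(f-u)\dbar\kappa$ should land in the range of $\dbar$ restricted appropriately. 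The trick is that \eqref{eq:solution_identity} is equivalent to saying $\kappa^{-1}f\dbar\kappa$ lies in $\range\dbar$ plus a term absorbed into the left side; I would instead phrase the whole thing on the space $L^2(\Omega,\psi-\log\kappa)$ where the weight $\kappa$ is folded in, since the estimate \eqref{eq:solution_estimate} is stated in that norm — this is the signal that one should run Hörmander's argument with weight $\psi - \log\kappa$ and the twist factor $\kappa$ simultaneously, which is exactly what \eqref{eq:Q_estimate} delivers.

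\textbf{Step 2: The weighted estimate and existence.} Apply the Corollary to \eqref{eq:Q_identity}: for $u\in\dom\dbar\cap\dom\dbar^*_\psi$,
\[
  \norm{\sqrt\kappa\,\dbar u}^2 + \norm{\sqrt\kappa\,\dbar^*_\psi u}^2 \geq -\norm{\sqrt\kappa\,\tau_\nabla u}^2 + 2\re(\dbar^*_\psi u,\langle u,\dbar\kappa\rangle) + \bigl((i\kappa\Theta_M + i\kappa\ddbar\psi - i\ddbar\kappa)u,u\bigr),
\]
all in $L^2(\Omega,\psi)$. Hypothesis \eqref{eq:curvature_hypothesis} says the curvature-minus-torsion term dominates $B\kappa^{-1}|\langle u,\dbar\kappa\rangle|^2$ pointwise. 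Combining with the Cauchy–Schwarz bound $2\re(\dbar^*_\psi u,\langle u,\dbar\kappa\rangle) \geq -\norm{\sqrt\kappa\,\dbar^*_\psi u}^2 - \norm{\kappa^{-1/2}\langle u,\dbar\kappa\rangle}^2$ and absorbing, I get
\[
  \norm{\sqrt\kappa\,\dbar u}^2 + \norm{\sqrt\kappa\,\dbar^*_\psi u}^2 \geq B\,\norm{\kappa^{-1/2}\langle u,\dbar\kappa\rangle}^2 - \norm{\kappa^{-1/2}\langle u,\dbar\kappa\rangle}^2 + \dots
\]
— the bookkeeping must be arranged so that the net coefficient works out to give, after the standard Hörmander duality argument, a solution with the precise constant $(1+B)^{-1/2}$. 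The cleanest execution: for $f\in\ker\dbar$, consider the functional $\phi\mapsto (f,\langle\phi,\dbar\kappa\rangle\kappa^{-1})_{L^2(\Omega,\psi)}$... Actually the sharpest path is a fixed-point map $T: u\mapsto$ (the $L^2(\Omega,\psi-\log\kappa)$-minimal solution of $\dbar(\cdot) = \kappa^{-1}(f-u)\dbar\kappa$), show via \eqref{eq:Q_estimate} that $T$ maps a ball into itself and is a contraction with factor $<1$ governed by $B$, extract the fixed point, and read off \eqref{eq:solution_estimate} from the contraction estimate. Uniqueness and the orthogonality condition (1) follow since the $L^2$-minimal solution is characterized by being orthogonal to $\ker\dbar$, and any two solutions differ by an element of $\ker\dbar$ which orthogonality kills.

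\textbf{The main obstacle} I anticipate is handling the implicit/self-referential nature of \eqref{eq:solution_identity} — the right-hand side depends on the unknown $u$ — which is why a naive application of Hörmander's theorem does not suffice and a fixed-point (or equivalently, a modified inner-product / operator-perturbation) argument is needed. Making the contraction constant come out to exactly $(1+B)^{-1/2}$ rather than some cruder bound requires carefully tracking how \eqref{eq:curvature_hypothesis} interacts with the cross term $2\re(\dbar^*_\psi u,\langle u,\dbar\kappa\rangle)$ in \eqref{eq:Q_estimate}; the right way is probably to complete the square in \eqref{eq:Q_identity} treating $\dbar^*_\psi u + \kappa^{-1}\langle u,\dbar\kappa\rangle$ (rather than $\dbar^*_\psi u$ alone) as the natural quantity, which is precisely the adjoint, in the $\psi-\log\kappa$ weight, of the twisted operator appearing on the left of \eqref{eq:solution_identity}. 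A secondary technical point is the regularity/density: \eqref{eq:Q_identity} needs $C^3$ boundary and $u\in C^2_{0,1}(\overline\Omega)\cap\dom\dbar^*$, so the fixed-point iteration must be run in a way compatible with the density lemma cited in the Corollary, and the distributional interpretation of \eqref{eq:solution_identity} at the end follows by testing against smooth forms and passing to the limit.
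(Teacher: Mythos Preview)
Your instinct to complete the square is correct, but the fixed-point framework you build around it has a genuine gap, and you miss the decisive step the paper uses.

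First, the self-referential aspect of \eqref{eq:solution_identity} is an illusion: since $f\in\ker\dbar$, the equation is equivalent to $\dbar(\kappa u)=f\dbar\kappa=\dbar(f\kappa)$, whose right-hand side is $\dbar$-closed. So the difficulty is not solvability per se; it is that you need $u$ (not $\kappa u$) to lie in $(\ker\dbar)^{\perp_{L^2(\Omega,\psi)}}$ while controlling $\norm{u}_{L^2(\Omega,\psi-\log\kappa)}$ with the sharp constant. Your proposed map $T:u\mapsto$ (minimal solution of $\dbar(\cdot)=\kappa^{-1}(f-u)\dbar\kappa$) is not well-defined on the whole space: for a generic $u$ the form $\kappa^{-1}(f-u)\dbar\kappa$ is not $\dbar$-closed (one computes $\dbar$ of it to be $-\kappa^{-1}\dbar u\wedge\dbar\kappa$). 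Even if you restrict to a subspace where this vanishes, you have not explained how the curvature hypothesis yields a contraction with factor exactly $(1+B)^{-1/2}$, nor why the fixed point lands in the $L^2(\Omega,\psi)$-orthogonal complement rather than the $L^2(\Omega,\psi-\log\kappa)$-orthogonal complement; these are different subspaces.

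The paper avoids all of this with one idea you do not mention: after completing the square in \eqref{eq:Q_estimate} (your sign is off, by the way---one has $\dbar^*_{\psi-\log\kappa}v=\dbar^*_\psi v-\kappa^{-1}\langle v,\dbar\kappa\rangle$), apply the weighted Bergman projection $P_{\psi-\log\kappa}$. This kills $\dbar^*_{\psi-\log\kappa}v_1$ and leaves precisely $(1+B)\norm{P_{\psi-\log\kappa}(\kappa^{-1}\langle v_1,\dbar\kappa\rangle)}^2$ dominated by $\norm{\dbar^*_\psi v_1}^2_{L^2(\Omega,\psi-\log\kappa)}$. Since $f=P_{\psi-\log\kappa}f$, the pairing $(f\dbar\kappa,v)_{L^2(\Omega,\psi)}$ factors through exactly this projected quantity, and a single application of Riesz on $H_\psi=\overline{\range\dbar^*_\psi}$ (with the $L^2(\Omega,\psi-\log\kappa)$ inner product) gives $u$ with all three properties at once. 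No iteration is needed, and the constant $(1+B)^{-1/2}$ falls out directly.
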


\begin{rem}
\label{rem:comparable_norms}
  In Proposition \ref{prop:solution_operator}, $\log\kappa$ is uniformly bounded on $\Omega$, so we have
  \[
    L^2(\Omega,\psi-\log\kappa)=L^2(\Omega,\psi+\log\kappa)=L^2(\Omega,\psi)
  \]
  with comparable norms.
\end{rem}

\begin{proof}
  Fix $v_1\in L^2_{0,1}(\Omega,\psi)\cap\ker\dbar\cap\dom\dbar^*_\psi$.  Substituting \eqref{eq:curvature_hypothesis} in \eqref{eq:Q_estimate}, we have
  \[
    \norm{\sqrt{\kappa}\dbar^*_\psi v_1}^2_{L^2(\Omega,\psi)}\geq
    2\re\left(\dbar^*_\psi v_1,\left<v_1,\dbar\kappa\right>\right)_{L^2(\Omega,\psi)}+B\norm{\kappa^{-\frac{1}{2}}\left<v_1,\dbar\kappa\right>}^2_{L^2(\Omega,\psi)},
  \]
  which gives us
  \[
    (1+B^{-1})\norm{\dbar^*_\psi v_1}^2_{L^2(\Omega,\psi-\log\kappa)}\geq
    B^{-1}\norm{\dbar^*_\psi v_1+B\kappa^{-1}\left<v_1,\dbar\kappa\right>}^2_{L^2(\Omega,\psi-\log\kappa)}.
  \]
  Since the weighted Bergman projection $P_{\psi-\log\kappa}$ is trivially bounded in $L^2(\Omega,\psi-\log\kappa)$, with norm one, we have
  \[
    (1+B)\norm{\dbar^*_\psi v_1}^2_{L^2(\Omega,\psi-\log\kappa)}\geq
    \norm{P_{\psi-\log\kappa}\left(\dbar^*_\psi v_1+B\kappa^{-1}\left<v_1,\dbar\kappa\right>\right)}^2_{L^2(\Omega,\psi-\log\kappa)}.
  \]
  By \eqref{eq:dbar_adjoint_formula} and \eqref{eq:adjoint_bar_Z_weighted}, 
  \[
    \dbar^*_\psi v_1-\dbar^*_{\psi-\log\kappa} v_1=\sum_{j=1}^n(L_j\log\kappa)\left<v_1,\bar\theta^j\right>=\kappa^{-1}\left<v_1,\dbar\kappa\right>,
  \]
  and so we have
  \begin{multline*}
    (1+B)\norm{\dbar^*_\psi v_1}^2_{L^2(\Omega,\psi-\log\kappa)}\geq\\
    \norm{P_{\psi-\log\kappa}\left(\dbar^*_{\psi-\log\kappa} v_1+(1+B)\kappa^{-1}\left<v_1,\dbar\kappa\right>\right)}^2_{L^2(\Omega,\psi-\log\kappa)}.
  \end{multline*}
  However, $\dbar^*_{\psi-\log\kappa}$ maps into the orthogonal complement of $\ker\dbar$ with respect to the inner product on $L^2(\Omega,\psi-\log\kappa)$, and hence it maps into the kernel of $P_{\psi-\log\kappa}$.  Thus we have
  \begin{equation}
  \label{eq:basic_estimate}
    \norm{\dbar^*_\psi v_1}^2_{L^2(\Omega,\psi-\log\kappa)}\geq
    (1+B)\norm{P_{\psi-\log\kappa}\left(\kappa^{-1}\left<v_1,\dbar\kappa\right>\right)}^2_{L^2(\Omega,\psi-\log\kappa)}.
  \end{equation}
  
  Let $H_{\psi}$ denote the closure of the range of $\dbar^*_\psi$ in $L^2(\Omega,\psi)$.  As noted in Remark \ref{rem:comparable_norms}, this is also the closure of the range of $\dbar^*_\psi$ in $L^2(\Omega,\psi-\log\kappa)$. Using the usual Hodge theory arguments, $H_{\psi}$ is also the orthogonal complement of $L^2(\Omega,\psi)\cap\ker\dbar$ with respect to the inner product on $L^2(\Omega,\psi)$.

  Fix $f\in\ker\dbar\cap L^2(\Omega,\psi)$.  For $v\in L^2_{0,1}(\Omega,\psi)\cap\dom\dbar\cap\dom\dbar^*_\psi$, we may write $v=v_1+v_2$, where $v_1\in\ker\dbar$ and $v_2$ is orthogonal to $\ker\dbar$ with respect to $L^2_{0,1}(\Omega,\psi)$.  Then since $f\dbar\kappa\in\ker\dbar$, we have
  \[
    \left(f\dbar\kappa,v\right)_{L^2(\Omega,\psi)}=\left(f\dbar\kappa,v_1\right)_{L^2(\Omega,\psi)}.
  \]
  Now (noting Remark \ref{rem:comparable_norms} again) $f=P_{\psi-\log\kappa}f$, so we have
  \begin{multline*}
    \abs{\left(f\dbar\kappa,v\right)_{L^2(\Omega,\psi)}}=\abs{\left(f,\left<v_1,\dbar\kappa\right>\right)_{L^2(\Omega,\psi)}}=\abs{\left(f,\kappa^{-1}\left<v_1,\dbar\kappa\right>\right)_{L^2(\Omega,\psi-\log\kappa)}}\\
    \abs{\left(f,P_{\psi-\log\kappa}\left(\kappa^{-1}\left<v_1,\dbar\kappa\right>\right)\right)_{L^2(\Omega,\psi-\log\kappa)}}.
  \end{multline*}
  By \eqref{eq:basic_estimate}, we have
  \[
    \abs{\left(f\dbar\kappa,v\right)_{L^2(\Omega,\psi)}}\leq\frac{1}{\sqrt{1+B}}\norm{f}_{L^2(\Omega,\psi-\log\kappa)}\norm{\dbar^*_\psi v_1}_{L^2(\Omega,\psi-\log\kappa)},
  \]
  but $\ker\dbar^*_\psi$ contains the orthogonal complement of $\ker\dbar$ with respect to $L^2(\Omega,\psi)$, so
  \begin{equation}
  \label{eq:bounded_functional}
    \abs{\left(f\dbar\kappa,v\right)_{L^2(\Omega,\psi)}}\leq\frac{1}{\sqrt{1+B}}\norm{f}_{L^2(\Omega,\psi-\log\kappa)}\norm{\dbar^*_\psi v}_{L^2(\Omega,\psi-\log\kappa)}.
  \end{equation}
  Now \eqref{eq:bounded_functional} and Remark \ref{rem:comparable_norms} imply that the map $\dbar^*_\psi v\mapsto(v,f\dbar\kappa)_{L^2(\Omega,\psi)}$ induces a well-defined bounded linear functional on $F:H_{\psi}\rightarrow\mathbb{C}$ satisfying the operator norm bound
  \[
    \norm{F}=\sup_{h\in H_{\psi}\backslash\{0\}}\frac{\abs{F h}}{\norm{h}_{L^2(\Omega,\psi-\log\kappa)}}\leq\frac{1}{\sqrt{1+B}}\norm{f}_{L^2(\Omega,\psi-\log\kappa)}.
  \]
  By the Riesz Representation Theorem, there exists a unique $u\in H_{\psi}$ satisfying \eqref{eq:solution_estimate}
  and
  \[
    \left(u,\dbar^*_\psi v\right)_{L^2(\Omega,\psi-\log\kappa)}=\left(f\dbar\kappa,v\right)_{L^2(\Omega,\psi)}
  \]
  for all $v\in L^2_{0,1}(\Omega,\psi)\cap\dom\dbar\cap\dom\dbar^*_\psi$.  Then $\dbar(\kappa u)=f\dbar\kappa$ in the distribution sense.  Expanding this derivative, we see that $u$ satisfies \eqref{eq:solution_identity}.
  
\end{proof}

\section{Estimates for the Bergman Projection}

\label{sec:estimates_Bergman}

\begin{prop}
\label{prop:Bergman_projection}
  Let $\Omega\subset M$ satisfy $\Omega=\bigcup_{j\in\mathbb{N}}\Omega_j$, where $\{\Omega_j\}_{j\in\mathbb{N}}$ is an increasing sequence of relatively compact pseudoconvex domains with $C^3$ boundaries satisfying
  \begin{equation}
  \label{eq:vanishing_measure}
    \lim_{j\rightarrow\infty}\abs{\Omega\backslash\Omega_j}=0.
  \end{equation}
  Let $\psi,\kappa\in C(\Omega)$ satisfy
  \begin{enumerate}
    \item $\kappa\in L^\infty(\Omega)$ and
    \item $\kappa>0$ on $\Omega$.
  \end{enumerate}
  For each $j\in\mathbb{N}$, let $\psi_j,\kappa_j\in C^2(\overline{\Omega_j})$ satisfy
  \begin{enumerate}
    \item there exists a constant $B>0$ independent of $j$ such that \eqref{eq:curvature_hypothesis} holds for each $\kappa_j$ and $\psi_j$ on $\Omega_j$ for all $v\in\Lambda^{0,1}(\Omega_j)$, and
    \item for every $\epsilon>0$ there exists $k\in\mathbb{N}$ such that for all $j\geq k$ we have
    \begin{equation}
    \label{eq:weight_approximation}
      \psi\leq\psi_j\leq\psi+\epsilon
    \end{equation}
    and
    \begin{equation}
    \label{eq:twist_approximation}
      e^{-\epsilon}\kappa\leq\kappa_j\leq\kappa
    \end{equation}
    on $\Omega_j$.
  \end{enumerate}
  Then for every $v\in L^2(\Omega,\psi)$,
  \begin{equation}
  \label{eq:Bergman_projection_weighted_estimate}
    \norm{P_\psi v}_{L^2(\Omega,\psi-\log\kappa)}\leq
    \frac{\sqrt{1+B}}{\sqrt{1+B}-1}\norm{v}_{L^2(\Omega,\psi-\log\kappa)}.
  \end{equation}
  Hence, $P_\psi$ admits a unique continuous extension to $L^2(\Omega,\psi-\log\kappa)$ satisfying \eqref{eq:Bergman_projection_weighted_estimate}.
\end{prop}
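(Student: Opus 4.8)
The plan is to obtain the estimate on each approximating domain $\Omega_j$ with a constant independent of $j$ by feeding the Bergman projection into Proposition~\ref{prop:solution_operator}, and then to pass to the limit. Write $w=\psi-\log\kappa$ and, on $\Omega_j$, $w_j=\psi_j-\log\kappa_j$; by \eqref{eq:weight_approximation} and \eqref{eq:twist_approximation} one has $w\le w_j\le w+2\epsilon$ on $\Omega_j$ once $j$ is large. Fix $v\in L^2(\Omega,\psi)$ and set $f_j=P_{\psi_j}(v|_{\Omega_j})$, the Bergman projection on $\Omega_j$ with weight $\psi_j$; thus $f_j\in\ker\dbar\cap L^2(\Omega_j,\psi_j)$ and $v-f_j$ is orthogonal to $\ker\dbar$ in $L^2(\Omega_j,\psi_j)$. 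For $j$ large the hypotheses of Proposition~\ref{prop:solution_operator} hold on $\Omega_j$ with $\psi_j,\kappa_j$ (in particular $\inf_{\Omega_j}\kappa_j>0$ by \eqref{eq:twist_approximation} and the positivity of $\kappa$), so applying it to $f_j$ produces $u_j\in L^2(\Omega_j,\psi_j)$ orthogonal to $\ker\dbar$ in $L^2(\Omega_j,\psi_j)$, satisfying $\norm{u_j}_{L^2(\Omega_j,w_j)}\le(1+B)^{-1/2}\norm{f_j}_{L^2(\Omega_j,w_j)}$, and solving $\dbar u_j=\kappa_j^{-1}(f_j-u_j)\dbar\kappa_j$. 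This last equation is equivalent to $\dbar(\kappa_j u_j)=f_j\,\dbar\kappa_j=\dbar(\kappa_j f_j)$, so $h_j:=\kappa_j(f_j-u_j)$ is holomorphic on $\Omega_j$.

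The heart of the argument is an algebraic identity. Using $e^{-w_j}=\kappa_j e^{-\psi_j}$, the reality of $\kappa_j$, the splitting $\kappa_j f_j=h_j+\kappa_j u_j$, and the two orthogonality relations $\left(v-f_j,h_j\right)_{L^2(\Omega_j,\psi_j)}=0$ and $\left(u_j,h_j\right)_{L^2(\Omega_j,\psi_j)}=0$ (both because $h_j$ is holomorphic), one computes
\[
  \norm{f_j}^2_{L^2(\Omega_j,w_j)}=\re\left(v,\,f_j-u_j\right)_{L^2(\Omega_j,w_j)}+\norm{u_j}^2_{L^2(\Omega_j,w_j)}.
\]
Setting $t=(1+B)^{-1/2}<1$ and applying Cauchy--Schwarz, the triangle inequality $\norm{f_j-u_j}_{L^2(\Omega_j,w_j)}\le\norm{f_j}_{L^2(\Omega_j,w_j)}+\norm{u_j}_{L^2(\Omega_j,w_j)}$, and $\norm{u_j}_{L^2(\Omega_j,w_j)}\le t\norm{f_j}_{L^2(\Omega_j,w_j)}$, one gets $(1-t^2)\norm{f_j}^2_{L^2(\Omega_j,w_j)}\le(1+t)\norm{v}_{L^2(\Omega_j,w_j)}\norm{f_j}_{L^2(\Omega_j,w_j)}$, hence
\[
  \norm{f_j}_{L^2(\Omega_j,w_j)}\le\frac{1}{1-t}\norm{v}_{L^2(\Omega_j,w_j)}=\frac{\sqrt{1+B}}{\sqrt{1+B}-1}\norm{v}_{L^2(\Omega_j,w_j)}.
\]
Because $w_j\ge w$ on $\Omega_j$ the right side is at most $\frac{\sqrt{1+B}}{\sqrt{1+B}-1}\norm{v}_{L^2(\Omega,w)}$, while $w_j\le w+2\epsilon$ gives $\norm{f_j}_{L^2(\Omega_j,w)}\le e^{\epsilon}\norm{f_j}_{L^2(\Omega_j,w_j)}$; together with the trivial bound $\norm{f_j}_{L^2(\Omega_j,\psi_j)}\le\norm{v}_{L^2(\Omega_j,\psi_j)}$ and \eqref{eq:weight_approximation}, the family $\{f_j\}$ (extended by zero) is bounded both in $L^2(\Omega,w)$ and in $L^2(\Omega,\psi)$.

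It remains to let $j\to\infty$. By the uniform bounds, along a subsequence $f_j\rightharpoonup F$ weakly in $L^2(\Omega,\psi)$ and in $L^2(\Omega,w)$ (the limits agree since the weights are comparable on compact subsets); the interior estimate in the spirit of \eqref{eq:Cauchy_estimate} shows the convergence is locally uniform on $\Omega$, so $F\in\ker\dbar$. For any $g\in\ker\dbar\cap L^2(\Omega,\psi)$, the identity $\left(v-f_j,g\right)_{L^2(\Omega_j,\psi_j)}=0$ passes to the limit --- using $\psi\le\psi_j\le\psi+\epsilon_j$ with $\epsilon_j\to0$ and $\abs{\Omega\setminus\Omega_j}\to0$, dominated convergence on the $v$-term, and weak-times-strong convergence on the $f_j$-term --- to give $\left(v-F,g\right)_{L^2(\Omega,\psi)}=0$, so $F=P_\psi v$. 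Weak lower semicontinuity of the norm then yields $\norm{P_\psi v}_{L^2(\Omega,w)}\le\frac{\sqrt{1+B}}{\sqrt{1+B}-1}\norm{v}_{L^2(\Omega,w)}$, which is \eqref{eq:Bergman_projection_weighted_estimate}. Finally, since $L^2(\Omega,\psi)$ contains $C^\infty_c(\Omega)$ and is therefore dense in $L^2(\Omega,\psi-\log\kappa)$, this estimate extends $P_\psi$ uniquely and continuously to all of $L^2(\Omega,\psi-\log\kappa)$. I expect the main obstacle to be the identification $F=P_\psi v$, which forces one to coordinate the three approximations --- the domain exhaustion, the weight approximation \eqref{eq:weight_approximation}, and the twist approximation \eqref{eq:twist_approximation} --- simultaneously; by contrast, the algebraic identity of the second paragraph, which is exactly what makes the clean constant $\frac{\sqrt{1+B}}{\sqrt{1+B}-1}$ emerge, is the conceptual crux but is a routine (if delicate) bookkeeping computation once one introduces $h_j$.
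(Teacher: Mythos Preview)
Your proof is correct and parallels the paper's closely, but the key estimate on $\Omega_j$ is obtained differently.  The paper invokes the Boas--Straube identity $P_{\psi_j}v=P_{\psi_j}\bigl(\kappa_j^{-1}P_{\psi_j+\log\kappa_j}(\kappa_j v)\bigr)$ to identify $f_j-u_j=\kappa_j^{-1}P_{\psi_j+\log\kappa_j}(\kappa_j v)$ explicitly, which immediately gives $\norm{f_j-u_j}_{L^2(\Omega_j,w_j)}=\norm{P_{\psi_j+\log\kappa_j}(\kappa_j v)}_{L^2(\Omega_j,\psi_j+\log\kappa_j)}\le\norm{v}_{L^2(\Omega_j,w_j)}$, and then a single triangle inequality yields $\norm{f_j}\le\norm{v}+t\norm{f_j}$.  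You instead bypass the auxiliary projection $P_{\psi_j+\log\kappa_j}$ entirely: after observing that $h_j=\kappa_j(f_j-u_j)$ is holomorphic, the two orthogonality relations produce your identity $\norm{f_j}^2_{w_j}=\re\,(v,f_j-u_j)_{w_j}+\norm{u_j}^2_{w_j}$, which delivers the same constant $\frac{1}{1-t}$ after Cauchy--Schwarz.  Your route is slightly leaner in that it never names $P_{\psi+\log\kappa}$; the paper's route has the advantage of making the Boas--Straube mechanism visible and giving the sharper intermediate bound $\norm{f_j-u_j}_{w_j}\le\norm{v}_{w_j}$ rather than $(1+t)\norm{f_j}_{w_j}$.

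The passage to the limit is essentially the same in both arguments; the only cosmetic differences are that the paper extends $f_j$ by $v$ on $\Omega\setminus\Omega_j$ rather than by zero, and verifies $F\in\ker\dbar$ by testing against compactly supported forms (Weyl's lemma) rather than by a normal-families/Cauchy-estimate argument.  Both are fine.
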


\begin{proof}
  The following technique leading to \eqref{eq:Bergman_projection_Boas_Straube_formula} was introduced by Boas and Straube in \cite{BoSt90}.  Let $v\in L^2(\Omega,\psi)\cap L^2(\Omega,\psi-\log\kappa)$ and $h\in L^2(\Omega,\psi)\cap\ker\dbar$.  Then
  \[
    (P_\psi v,h)_{L^2(\Omega,\psi)}=(v,h)_{L^2(\Omega,\psi)}=(\kappa v,h)_{L^2(\Omega,\psi+\log\kappa)}
  \]
  Now
  \[
    \norm{\kappa v}_{L^2(\Omega,\psi+\log\kappa)}=\norm{v}_{L^2(\Omega,\psi-\log\kappa)}<\infty,
  \]
  so $P_{\psi+\log\kappa}(\kappa v)$ is well defined, and we have
  \begin{multline*}
    (P_\psi v,h)_{L^2(\Omega,\psi)}=(P_{\psi+\log\kappa}(\kappa v),h)_{L^2(\Omega,\psi+\log\kappa)}
    =(\kappa^{-1}P_{\psi+\log\kappa}(\kappa v),h)_{L^2(\Omega,\psi)}=\\
    (P_\psi(\kappa^{-1}P_{\psi+\log\kappa}(\kappa v)),h)_{L^2(\Omega,\psi)}.
  \end{multline*}
  Since this holds for all $h\in L^2(\Omega,\psi)\cap\ker\dbar$,
  \begin{equation}
  \label{eq:Bergman_projection_Boas_Straube_formula}
    P_\psi v=P_\psi(\kappa^{-1}P_{\psi+\log\kappa}(\kappa v))\text{ for all }v\in L^2(\Omega,\psi)\cap L^2(\Omega,\psi-\log\kappa).
  \end{equation}
  
  Observe that in this context Remark \ref{rem:comparable_norms} applies on $\Omega_j$.  Since $\kappa\in L^\infty(\Omega)$, we also have
  \[
    L^2(\Omega,\psi+\log\kappa)\subset L^2(\Omega,\psi)\subset L^2(\Omega,\psi-\log\kappa)
  \]
  
  Fix $v\in L^2(\Omega,\psi)$.  On $\Omega_j$, set
  \[
    u_j=(P_{\psi_j}-I)\left(\kappa_j^{-1}P_{\psi_j+\log\kappa_j}\left(\kappa_j v|_{\Omega_j}\right)\right)
  \]
  and $f_j=P_{\psi_j}\left(v|_{\Omega_j}\right)$.  Then
  \[
    \dbar u_j=\kappa_j^{-2}\dbar\kappa_j\wedge P_{\psi_j+\log\kappa_j}\left(\kappa_j v|_{\Omega_j}\right)=\kappa_j^{-1}(f_j-u_j)\dbar\kappa_j,
  \]
  so $u_j$ satisfies \eqref{eq:solution_identity} on $\Omega_j$ by \eqref{eq:Bergman_projection_Boas_Straube_formula}.  Since $u_j$ is orthogonal to $\ker\dbar$ with respect to $L^2(\Omega_j,\psi_j)$, $u_j$ must be the unique solution constructed in Proposition \ref{prop:solution_operator}, and hence $u_j$ also satisfies \eqref{eq:solution_estimate}. By \eqref{eq:Bergman_projection_Boas_Straube_formula},
  \[
    f_j-u_j=\kappa_j^{-1}P_{\psi_j+\log\kappa_j}\left(\kappa_j v|_{\Omega_j}\right),
  \]
  and so
  \begin{multline*}
    \norm{f_j-u_j}_{L^2(\Omega_j,\psi_j-\log\kappa_j)}=\norm{P_{\psi_j+\log\kappa_j}\left(\kappa_j v|_{\Omega_j}\right)}_{L^2(\Omega_j,\psi_j+\log\kappa_j)}\leq\\
    \norm{v|_{\Omega_j}}_{L^2(\Omega_j,\psi_j-\log\kappa_j)}.
  \end{multline*}
  Combined with \eqref{eq:solution_estimate}, we have
  \begin{multline*}
    \norm{f_j}_{L^2(\Omega_j,\psi_j-\log\kappa_j)}\leq\norm{f_j-u_j}_{L^2(\Omega_j,\psi_j-\log\kappa_j)}+\norm{u_j}_{L^2(\Omega_j,\psi_j-\log\kappa_j)}\leq\\
    \norm{v|_{\Omega_j}}_{L^2(\Omega_j,\psi_j-\log\kappa_j)}+\frac{1}{\sqrt{1+B}}\norm{f_j}_{L^2(\Omega_j,\psi_j-\log\kappa_j)},
  \end{multline*}
  or equivalently,
  \begin{equation}
  \label{eq:u_approximation_estimate}
    \norm{f_j}_{L^2(\Omega_j,\psi_j-\log\kappa_j)}\leq
    \frac{\sqrt{1+B}}{\sqrt{1+B}-1}\norm{v|_{\Omega_j}}_{L^2(\Omega_j,\psi_j-\log\kappa_j)}.
  \end{equation}
  
  For $\epsilon>0$, choose $k\in\mathbb{N}$ such that for all $j\geq k$ \eqref{eq:weight_approximation} holds on $\Omega_j$, \eqref{eq:twist_approximation} holds on $\Omega_j$,
  \[
    \norm{v}_{L^2(\Omega\backslash\Omega_j,\psi)}\leq\epsilon,
  \]
  and
  \[
    \norm{v}_{L^2(\Omega\backslash\Omega_j,\psi-\log\kappa)}\leq\epsilon.
  \]
  Here, we have used \eqref{eq:vanishing_measure}, the fact that $|v|^2 e^{-\psi}$ is integrable, and $\kappa\in L^\infty(\Omega)$.  Define $\tilde f_j\in L^2(\Omega,\psi-\log\kappa)$ by $\tilde f_j=f_j$ on $\Omega_j$ and $\tilde f_j=v$ on $\Omega\backslash\Omega_j$.  Then \eqref{eq:u_approximation_estimate} gives us
  \[
    e^{-\epsilon}\norm{\tilde f_j}_{L^2(\Omega,\psi)}\leq\norm{v}_{L^2(\Omega_j,\psi_j)}+e^{-\epsilon}\norm{v}_{L^2(\Omega\backslash\Omega_j,\psi)}
    \leq\norm{v}_{L^2(\Omega,\psi)}+e^{-\epsilon}\epsilon,
  \]
  so
  \[
    \norm{\tilde f_j}_{L^2(\Omega,\psi)}\leq e^{\epsilon}\norm{v}_{L^2(\Omega,\psi)}+\epsilon
  \]
  for all $j\geq k$.  By restricting to a subsequence, we may assume that $\set{\tilde f_j}_{j\in\mathbb{N}}$ converges weakly with respect to $L^2(\Omega,\psi)$ to some element $f\in L^2(\Omega,\psi)$ satisfying 
  \[
    \norm{f}_{L^2(\Omega,\psi)}\leq e^{\epsilon}\norm{v}_{L^2(\Omega,\psi)}+\epsilon.
  \]
  Since $f$ is independent of $\epsilon$, we must have
  \[
    \norm{f}_{L^2(\Omega,\psi)}\leq \norm{v}_{L^2(\Omega,\psi)}.
  \]
  
  By similar reasoning, \eqref{eq:u_approximation_estimate}, \eqref{eq:weight_approximation}, and \eqref{eq:twist_approximation} give us
  \begin{multline*}
    e^{-2\epsilon}\norm{\tilde f_j}_{L^2(\Omega,\psi-\log\kappa)}\leq
    \frac{\sqrt{1+B}}{\sqrt{1+B}-1}\norm{v}_{L^2(\Omega_j,\psi_j-\log\kappa_j)}+e^{-2\epsilon}\norm{v}_{L^2(\Omega\backslash\Omega_j,\psi-\log\kappa)}\\
    \leq\frac{\sqrt{1+B}}{\sqrt{1+B}-1}\norm{v}_{L^2(\Omega,\psi-\log\kappa)}+e^{-2\epsilon}\epsilon,
  \end{multline*}
  or equivalently
  \[
    \norm{\tilde f_j}_{L^2(\Omega,\psi-\log\kappa)}\leq
    e^{2\epsilon}\frac{\sqrt{1+B}}{\sqrt{1+B}-1}\norm{v}_{L^2(\Omega,\psi-\log\kappa)}+\epsilon
  \]
  for all $j\geq k$.  Now,
  \begin{multline*}
    \norm{f}_{L^2(\Omega,\psi-\log\kappa)}=\sup_{h\in L^2(\Omega,\psi+\log\kappa)\backslash\{0\}}\frac{\abs{(f,h)_{L^2(\Omega,\psi)}}}{\norm{h}_{L^2(\Omega,\psi+\log\kappa)}}\\
    =\sup_{h\in L^2(\Omega,\psi+\log\kappa)\backslash\{0\}}\lim_{j\rightarrow\infty}\frac{\abs{(\tilde f_j,h)_{L^2(\Omega,\psi)}}}{\norm{h}_{L^2(\Omega,\psi+\log\kappa)}}\leq\lim_{j\rightarrow\infty}\norm{\tilde f_j}_{L^2(\Omega,\psi-\log\kappa)}\\
    \leq e^{2\epsilon}\frac{\sqrt{1+B}}{\sqrt{1+B}-1}\norm{v}_{L^2(\Omega,\psi-\log\kappa)}+\epsilon.
  \end{multline*}
  Once again, $f$ is independent of $\epsilon$, so we must have
  \begin{equation}
  \label{eq:f_estimate}
    \norm{f}_{L^2(\Omega,\psi-\log\kappa)}\leq
    \frac{\sqrt{1+B}}{\sqrt{1+B}-1}\norm{v}_{L^2(\Omega,\psi-\log\kappa)}.
  \end{equation}
  
  Fix $h\in L^2(\Omega,\psi)\cap\ker\dbar$.  Then
  \[
    (f-v,h)_{L^2(\Omega,\psi)}=\lim_{j\rightarrow\infty}(\tilde f_j-v,h)_{L^2(\Omega,\psi)}=
    \lim_{j\rightarrow\infty}(f_j-v,h)_{L^2(\Omega_j,\psi)}.
  \]
  On $\Omega_j$, $v-f_j=(I-P_{\psi_j})v$, which is orthogonal to $\ker\dbar$ with respect to $L^2(\Omega_j,\psi_j)$, and hence we may write
  \[
    (f-v,h)_{L^2(\Omega,\psi)}=\lim_{j\rightarrow\infty}\left((f_j-v,h)_{L^2(\Omega_j,\psi)}-(f_j-v,h)_{L^2(\Omega_j,\psi_j)}\right).
  \]
  By \eqref{eq:weight_approximation}, whenever $j\geq k$, we have
  \[
    0\leq e^{-\psi}-e^{-\psi_j}=e^{-\psi}(1-e^{\psi-\psi_j})\leq e^{-\psi}(1-e^{-\epsilon}),
  \]
  and so
  \begin{multline*}
    \abs{(f_j-v,h)_{L^2(\Omega_j,\psi)}-(f_j-v,h)_{L^2(\Omega_j,\psi_j)}}\leq\\
    (1-e^{-\epsilon})\norm{f_j-v}_{L^2(\Omega_j,\psi)}\norm{h}_{L^2(\Omega_j,\psi)}.
  \end{multline*}
  We conclude that $(f-v,h)_{L^2(\Omega,\psi)}=0$, so $f-v$ is orthogonal to $L^2(\Omega,\psi)\cap\ker\dbar$ with respect to the $L^2(\Omega,\psi)$ inner product.
  
  Finally, we let $\theta\in C^\infty_{1,0}(\Omega)$ be compactly supported, so we may assume that $k\in\mathbb{N}$ is sufficiently large so that the support of $\theta$ is contained in $\Omega_j$ for all $j\geq k$.  Then
  \begin{multline*}  
    (f,\dbar^*_{\psi}\bar\theta)_{L^2(\Omega,\psi)}=\lim_{j\rightarrow\infty}(\tilde f_j,\dbar^*_{\psi}\bar\theta)_{L^2(\Omega,\psi)}\\
    =\lim_{j\rightarrow\infty}(f_j,\dbar^*_{\psi}\bar\theta)_{L^2(\Omega_j,\psi)}+\lim_{j\rightarrow\infty}(v,\dbar^*_{\psi}\bar\theta)_{L^2(\Omega\backslash\Omega_j,\psi)}.
  \end{multline*}
  The first limit vanishes because $f_j\in\ker\dbar$ and $\bar\theta\in\dom\dbar^*_{\psi}$ on $\Omega_j$ for $j\geq k$.  The second limit vanishes because of \eqref{eq:vanishing_measure} and the fact that $\left<v,\dbar^*_{\psi}\bar\theta\right>e^{-\psi}$ is integrable.  Hence, $f\in\ker\dbar$ in the sense of distributions, but by Weyl's lemma we may conclude that $f$ is holomorphic.
  
  Now $f$ is holomorphic and $f-v$ is orthogonal to $L^2(\Omega,\psi)\cap\ker\dbar$ with respect to the $L^2(\Omega,\psi)$ inner product, but these properties uniquely characterize $P_\psi v$, so we must have $f=P_\psi v$, and hence \eqref{eq:Bergman_projection_weighted_estimate} follows from \eqref{eq:f_estimate}.
\end{proof}

\section{Proof of Theorem \ref{thm:DF_psh}}

\label{sec:proof_of_C2_theorem}

\begin{lem}
\label{lem:DF_psh}
  Let $M$ be a Hermitian manifold of dimension $n\geq 2$ and let $\Omega\subset M$ be a relatively compact domain with Lipschitz boundary.  Let $\psi\in C^2(M)$.  Suppose that for some $0<\eta<1$ and $B>0$, there exists a Lipschitz defining function $\rho$ for $\Omega$ such that $\rho|_\Omega\in C^2(\Omega)$ and
  \begin{multline}
  \label{eq:DF_psh_B}
    \left((-\rho)^\eta\left(\Theta_M+\ddbar\psi\right)-\ddbar(-\rho)^\eta\right)(Z,\bar Z)-(-\rho)^\eta\abs{\tau_\nabla Z^\flat}^2\geq\\
     B\eta^2(-\rho)^{\eta-2}\abs{\partial\rho(Z)}^2
  \end{multline}
  holds on $\Omega$ for all $Z\in T^{1,0}(\Omega)$.  Then the Bergman projection $P$ is continuous in $W^{\eta/2}(\Omega)$.
\end{lem}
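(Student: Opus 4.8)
The plan is to deduce the lemma from Proposition \ref{prop:Bergman_projection} together with the Sobolev comparisons of Section \ref{sec:Sobolev_spaces}, with a duality step in between. Write $s=\eta/2\in(0,\tfrac12)$ and set $\kappa=(-\rho)^\eta$ on $\Omega$. Unwinding the musical isomorphism and the endomorphism \eqref{eq:Theta_operator_defn} of Section \ref{sec:Hermitian_geometry} --- so that for $\bar\theta=\overline{Z^\flat}$ one has $\langle i\ddbar\kappa\,\bar\theta,\bar\theta\rangle=\ddbar(-\rho)^\eta(Z,\bar Z)$, $\abs{\sqrt{\kappa}\,\tau_\nabla\bar\theta}^2=(-\rho)^\eta\abs{\tau_\nabla Z^\flat}^2$, and $\abs{\langle\bar\theta,\dbar\kappa\rangle}^2=\eta^2(-\rho)^{2\eta-2}\abs{\partial\rho(Z)}^2$ --- one sees at once that \eqref{eq:DF_psh_B} is exactly the curvature hypothesis \eqref{eq:curvature_hypothesis} for the pair $(\psi,\kappa)$ on $\Omega$, with the same constant $B$.

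To invoke Proposition \ref{prop:Bergman_projection} I would produce an approximating sequence as follows. Take an increasing exhaustion $\Omega=\bigcup_{j}\Omega_j$ by relatively compact pseudoconvex domains with $C^3$ boundary and $\abs{\Omega\setminus\Omega_j}\to0$, and set $\psi_j=\psi$, $\kappa_j=(-\rho)^\eta|_{\overline{\Omega_j}}$. Since $\overline{\Omega_j}\subset\Omega$ and $\rho|_\Omega\in C^2(\Omega)$, each $\kappa_j\in C^2(\overline{\Omega_j})$ with $\inf_{\Omega_j}\kappa_j>0$; the approximation conditions \eqref{eq:weight_approximation} and \eqref{eq:twist_approximation} then hold trivially; and \eqref{eq:curvature_hypothesis} holds for $(\psi_j,\kappa_j)$ on $\Omega_j$ uniformly in $j$ simply because \eqref{eq:DF_psh_B} holds on all of $\Omega$. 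Moreover $\kappa=(-\rho)^\eta\in C(\Omega)\cap L^\infty(\Omega)$ is positive. Proposition \ref{prop:Bergman_projection} then furnishes a constant $C_B$ with $\norm{P_\psi v}_{L^2(\Omega,\psi-\log\kappa)}\le C_B\norm{v}_{L^2(\Omega,\psi-\log\kappa)}$, i.e.\ (recalling $\log\kappa=\eta\log(-\rho)$) boundedness of $P_\psi$ on the space with weight $e^{-\psi}(-\rho)^{\eta}$.

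Next comes the duality step, which converts this into boundedness in the weight that governs the $W^s$-norm of holomorphic functions. Since $P_\psi$ is self-adjoint for $\langle\cdot,\cdot\rangle_{L^2(\Omega,\psi)}$, and $L^2(\Omega,\psi-\log\kappa)$ and $L^2(\Omega,\psi+\log\kappa)$ are in duality under the $L^2(\Omega,\psi)$ pairing (Cauchy--Schwarz with $e^{-\psi}=(e^{-\psi}(-\rho)^{\eta})^{1/2}(e^{-\psi}(-\rho)^{-\eta})^{1/2}$), the boundedness of $P_\psi$ on $L^2(\Omega,\psi-\log\kappa)$ yields boundedness, with the same norm, on $L^2(\Omega,\psi+\log\kappa)$; a routine truncation handles a priori finiteness. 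Since $(-\rho)$ is comparable to $\delta=\dist(\cdot,\partial\Omega)$ and $e^{\pm\psi}$ is bounded on $\overline\Omega$, this reads $\norm{\delta^{-s}P_\psi v}_{L^2(\Omega)}\lesssim\norm{\delta^{-s}v}_{L^2(\Omega)}$. Finally, for $v\in W^s(\Omega)$: \eqref{eq:weighted_norm_bounded_by_Sobolev_norm} gives $\norm{\delta^{-s}v}_{L^2(\Omega)}\lesssim\norm{v}_{W^s(\Omega)}$, the previous inequality then gives $\norm{\delta^{-s}P_\psi v}_{L^2(\Omega)}\lesssim\norm{v}_{W^s(\Omega)}$, and since $P_\psi v$ is holomorphic, \eqref{eq:Sobolev_norm_bounded_by_weighted_norm} gives $\norm{P_\psi v}_{W^s(\Omega)}\lesssim\norm{\delta^{-s}P_\psi v}_{L^2(\Omega)}$; chaining these yields the continuity of $P_\psi$ on $W^{\eta/2}(\Omega)$.

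The step I expect to be the real obstacle is the construction of the pseudoconvex exhaustion $\{\Omega_j\}$ with $C^3$ boundary. The domain $\Omega$ need not be pseudoconvex, and the sublevel sets $\{\rho<-c\}$ of a merely $C^2$ Lipschitz defining function are in general neither $C^3$ nor pseudoconvex --- restricting \eqref{eq:DF_psh_B} to the complex tangent space of $\{\rho=-c\}$ only forces $\ddbar\rho$ there to dominate a quantity of size $O(c)$ of indefinite sign. Thus this exhaustion has to come from the geometry at hand (for instance a genuine smooth plurisubharmonic exhaustion of a one-sided neighborhood of $\partial\Omega$, or the foliated/fiber-bundle structure available in the examples of Section \ref{sec:Example}), rather than from a naive mollification of $\rho$, which --- as stressed in the discussion following Theorem \ref{thm:DF_psh} --- can destroy \eqref{eq:DF_psh_B}.
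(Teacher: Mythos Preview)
Your approach matches the paper's step for step: set $\kappa=(-\rho)^\eta$, verify via \eqref{eq:partial_sigma} that \eqref{eq:DF_psh_B} is exactly \eqref{eq:curvature_hypothesis}, invoke Proposition~\ref{prop:Bergman_projection} to obtain boundedness of $P_\psi$ on $L^2(\Omega,\psi-\log\kappa)$, dualize via self-adjointness of $P_\psi$ to obtain boundedness on $L^2(\Omega,\psi+\log\kappa)$, and finish with \eqref{eq:weighted_norm_bounded_by_Sobolev_norm} and \eqref{eq:Sobolev_norm_bounded_by_weighted_norm}.

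On the exhaustion, the paper is actually briefer than you are: it invokes Proposition~\ref{prop:Bergman_projection} with the constant choice $\Omega_j=\Omega$, $\psi_j=\psi|_{\Omega_j}$, $\kappa_j=\kappa|_{\Omega_j}$, and does not construct any smaller domains at all.  Read literally, that choice would require $\Omega$ itself to be pseudoconvex with $C^3$ boundary and $\kappa=(-\rho)^\eta\in C^2(\overline\Omega)$, none of which follows from the hypotheses of the lemma; so the paper does not engage with the obstacle you flag.  Your computation is correct: restricting \eqref{eq:DF_psh_B} to the complex tangent space of $\{\rho=-c\}$ gives only
\[
  \ddbar\rho(Z,\bar Z)\geq-\frac{c}{\eta}\Bigl[(\Theta_M+\ddbar\psi)(Z,\bar Z)-\abs{\tau_\nabla Z^\flat}^2\Bigr],
\]
an $O(c)$ lower bound of indefinite sign, so sublevel sets of $\rho$ need not be pseudoconvex.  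The paper offers no separate remedy for this point; whatever mechanism the author has in mind (e.g.\ re-running Proposition~\ref{prop:solution_operator} with a twist that vanishes on $\partial\Omega_j$ so the boundary term in \eqref{eq:Q_identity} drops out directly, or a modification of $\psi_j$ that absorbs the $O(c)$ Levi defect) is not recorded in the proof as written.
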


\begin{proof}
If $\kappa=(-\rho)^\eta$, then
\begin{equation}
\label{eq:partial_sigma}
  \partial\kappa=-\eta(-\rho)^{\eta-1}\partial\rho.
\end{equation}
Using \eqref{eq:partial_sigma}, we see that \eqref{eq:DF_psh_B} and \eqref{eq:curvature_hypothesis} are equivalent to each other.  Using Proposition \ref{prop:Bergman_projection} with $\Omega_j=\Omega$, $\psi_j=\psi|_{\Omega_j}$, and $\kappa_j=\kappa|_{\Omega_j}$, we have
\[
  \norm{(-\rho)^{\eta/2}P_\psi v}_{L^2(\Omega,\psi)}\leq\frac{\sqrt{1+B}}{\sqrt{1+B}-1}\norm{(-\rho)^{\eta/2}v}_{L^2(\Omega,\psi)}.
\]
Since the Bergman projection is self-adjoint, we immediately obtain the dual estimate
\[
  \norm{(-\rho)^{-\eta/2}P_\psi v}_{L^2(\Omega,\psi)}\leq\frac{\sqrt{1+B}}{\sqrt{1+B}-1}\norm{(-\rho)^{-\eta/2}v}_{L^2(\Omega,\psi)}.
\]
Since $P_\psi v$ is necessarily a holomorphic function and $\psi$ is uniformly bounded on $\overline\Omega$, we may use \eqref{eq:weighted_norm_bounded_by_Sobolev_norm} and \eqref{eq:Sobolev_norm_bounded_by_weighted_norm} to prove that these weighted estimates imply Sobolev space estimates for the Bergman projection.

\end{proof}

To prove Theorem \ref{thm:DF_psh}, we first fix $\frac{a}{2}<s<\frac{b}{2}$ and $Z\in T^{1,0}(\Omega)$.  For $\psi\in C^2(M)$, $\rho\in C^2(\Omega)$, and $0<\eta<1$, we define a $(1,1)$-form $\Psi_{\psi,\rho,\eta}$ by
\begin{equation}
\label{eq:Psi_defn}
  \Psi_{\psi,\rho,\eta}(Z,\bar W)=\left(\Theta_M+\ddbar\psi+\eta(-\rho)^{-1}\ddbar\rho\right)(Z,\bar W)-\left<\tau_\nabla Z^\flat,\tau_\nabla W^\flat\right>
\end{equation}
for all $Z,W\in T^{1,0}(\Omega)$.  We compute
\begin{equation}
\label{eq:Psi_interpolation}
  \Psi_{\psi,\rho,2s}(Z,\bar Z)=\frac{b-2s}{b-a}\Psi_{\psi,\rho,a}(Z,\bar Z)+\frac{2s-a}{b-a}\Psi_{\psi,\rho,b}(Z,\bar Z).
\end{equation}

When $\kappa=(-\rho)^\eta$, we may differentiate \eqref{eq:partial_sigma} again to obtain
\begin{equation}
\label{eq:ddbar_sigma}
  \ddbar\kappa=-\eta(-\rho)^{\eta-1}\ddbar\rho-\eta(1-\eta)(-\rho)^{\eta-2}\partial\rho\wedge\dbar\rho.
\end{equation}
Since \eqref{eq:DF_psh} holds for $\eta=b$, \eqref{eq:ddbar_sigma} gives us
\begin{equation}
\label{eq:Psi_a}
  \Psi_{\psi,\rho,b}(Z,\bar Z)+b(1-b)(-\rho)^{-2}\abs{\partial\rho(Z)}^2\geq 0,
\end{equation}
and similarly we have
\begin{equation}
\label{eq:Psi_b}
  \Psi_{\psi,\rho,a}(Z,\bar Z)+a(1-a)(-\rho)^{-2}\abs{\partial\rho(Z)}^2\geq 0.
\end{equation}
Substituting \eqref{eq:Psi_a} and \eqref{eq:Psi_b} into \eqref{eq:Psi_interpolation} gives us
\[
  \Psi_{\psi,\rho,2s}(Z,\bar Z)\geq-\left(ab+2s-2s(a+b)\right)(-\rho)^{-2}\abs{\partial\rho(Z)}^2,
\]
and hence
\[
  \Psi_{\psi,\rho,2s}(Z,\bar Z)+2s(1-2s)(-\rho)^{-2}\abs{\partial\rho(Z)}^2\geq(b-2s)(2s-a)(-\rho)^{-2}\abs{\partial\rho(Z)}^2.
\]
Hence we have \eqref{eq:DF_psh_B} with $B=(2s)^{-2}(b-2s)(2s-a)$, so the conclusion follows from Lemma \ref{lem:DF_psh}.

When $0<s\leq\frac{a}{2}$, we may use interpolation to complete the proof of the result.

\section{Examples}
\label{sec:Example}

\subsection{Hopf Manifolds}
\label{sec:hopf_manifolds}

Fix $a\in\mathbb{C}$ such that $0<|a|<1$.  For $n\geq 1$, let $\mathbb{H}^n=(\mathbb{C}^n\backslash\{0\})/\sim$, where $z\sim w$ if $z=a^j w$ for some $j\in\mathbb{Z}$.  Then $\mathbb{H}^n$ is a Hopf manifold, which is known to not admit a global K\"ahler metric when $n\geq 2$.  Endow $T^{1,0}(M)$ with a metric induced by the K\"ahler form $\omega=\frac{i}{2|z|^2}\sum_{j=1}^n dz_j\wedge d\bar z_j$.  Observe that $W_1=\sum_{j=1}^n z_j\frac{\partial}{\partial z_j}$ induces a global unit-length holomorphic vector field on $M$.  By \eqref{eq:ricci_computation}, we have
\[
  \Theta_M=\ddbar\log|z|^{2n}=\frac{n}{|z|^2}\ddbar|z|^2-\frac{n}{|z|^4}\partial|z|^2\wedge\dbar|z|^2,
\]
and hence
\begin{equation}
\label{eq:hopf_curvature}
  \Theta_M(Z,\bar Z)=n|Z|^2-n\abs{\left<Z,W_1\right>}^2\text{ for all }Z\in T^{1,0}(M).
\end{equation}
Therefore, $i\Theta_M$ has $n-1$ eigenvalues equal to $n$ and one eigenvalue equal to zero with eigenvector $W_1$.  Since
\[
  d\omega=-d(\log|z|^2)\wedge\omega,
\]
\eqref{eq:Kahler_differential} gives us
\[
  \tau_\nabla\theta=-\partial(\log|z|^2)\wedge\theta
\]
for all $\theta\in\Lambda^{1,0}(M)$.  On $U\subset M$, let $\{W_j\}_{1\leq j\leq n}\in T^{1,0}(U)$ be an orthonormal basis with $W_1$ as above.  Note that $W_1\log|z|^2\equiv 1$ and $W_j\log|z|^2\equiv 0$ for all $2\leq j\leq n$.  For $Z\in T^{1,0}(U)$, we have
\[
  (\tau_\nabla Z^\flat)(W_j,W_k)=0\text{ if }2\leq j<k\leq n
\]
and
\[
  (\tau_\nabla Z^\flat)(W_1,W_j)=-\left<W_j,Z\right>\text{ if }2\leq j\leq n.
\]
Hence,
\[
  \abs{\tau_\nabla Z^\flat}^2=\sum_{j=2}^n\abs{\left<W_j,Z\right>}^2,
\]
or equivalently
\begin{equation}
\label{eq:hopf_torsion}
  \abs{\tau_\nabla Z^\flat}^2=\abs{Z}^2-\abs{\left<Z,W_1\right>}^2.
\end{equation}
Substituting \eqref{eq:hopf_curvature} and \eqref{eq:hopf_torsion} in \eqref{eq:DF_psh}, we see that \eqref{eq:DF_psh} holds on a domain $\Omega\subset M$ if and only if
\begin{equation}
\label{eq:hopf_DF_PSH}
  (n-1)(-\rho)^\eta\left(\abs{Z}^2-\abs{\left<Z,W_1\right>}^2\right)-\ddbar(-\rho)^\eta(Z,\bar Z)\geq 0
\end{equation}
on $\Omega$ for all $Z\in T^{1,0}(\Omega)$.  This means that on $\mathbb{H}^n$ with $n\geq 2$, the relevant curvature dominates the torsion, and hence Theorem \ref{thm:DF_psh} will apply on any domain admitting a positive Diederich-Forn{\ae}ss index.  Moreover, \eqref{eq:hopf_DF_PSH} gains additional positivity for $Z$ orthogonal to $W_1$, so Theorem \ref{thm:DF_psh} may apply with a weaker notion of the Diederich-Forn{\ae}ss index.

%\subsection{An Application of Theorem \ref{thm:DF_strict_psh}}
%\label{sec:lipschitz_example}

%Define $D$ to be the set of all $z\in\mathbb{C}$ such that $\abs{\im z}<1$ and $\abs{\re z}<1$.  A Lipschitz defining function for $D$ is given by $r(z)=\max\{(\im z)^2-1,(\re z)^2-1\}$.  Since $r$ is a convex function, $-(-r)^\eta$ is strictly convex for any $0<\eta<1$.  For $n\geq 2$, let $M=\mathbb{C}\times\mathbb{CP}^{n-1}$.  Equip $M$ with the product metric induced by the Euclidean metric on $\mathbb{C}$ and the Fubini-Study metric on $\mathbb{CP}^{n-1}$.  Then \eqref{eq:positivity_condition} holds on $M$.  Set $\Omega=D\times\mathbb{CP}^{n-1}\subset M$.  Since $r$ is a convex function, $-(-r)^\eta$ is strictly convex on $D$ for any $0<\eta<1$, and since the Ricci tensor is positive definite for the Fubini-Study metric, \eqref{eq:DF_strict_psh} will also hold for any $0<\eta<1$.  Hence, Theorem \ref{thm:DF_strict_psh} implies that the Bergman projection is continuous in $W^s(\Omega)$ for all $0\leq s<\frac{1}{2}$.  Observe that $\Omega$ is a pseudoconvex domain with Levi-flat boundary, but it is not a Stein domain, as it contains compact complex submanifolds.

\subsection{An Application of Theorem \ref{thm:DF_psh}}
\label{sec:C2_example}

Define $D$ to be the set of all $z\in\mathbb{C}$ such that $\abs{\im z}<1$ and $\abs{\re z}<1$.  A defining function for $D$ is given by
\[
  r(z)=\begin{cases}
    \frac{((\im z)^2-1)((\re z)^2-1)}{|z|^2-2}&z\in D\\
    \max\{(\im z)^2-1,(\re z)^2-1\}&z\notin D
  \end{cases}.
\]
Then $r$ is a Lipschitz defining function for $D$ that satisfies $r|_D\in C^2(D)$.  Furthermore, $r$ is strictly convex on $D$.  %To see that $r$ is Lipschitz on $D$, we note that for $z=x+iy$ with $|x|<1$ and $|y|<1$, $z\in D$ so we may write

For $n\geq 2$, set $M=\mathbb{C}\times\mathbb{H}^{n-1}$, where $\mathbb{H}^{n-1}$ is defined in Section \ref{sec:hopf_manifolds}.  We will equip $\mathbb{C}$ with the Euclidean metric, $\mathbb{H}^{n-1}$ with the metric given in Section \ref{sec:hopf_manifolds}, and $M$ with the induced product metric.  Let $\Omega=D\times\mathbb{H}^{n-1}$ with defining function $\rho(z)=r(z_1)$.  Then $\Omega$ is a pseudoconvex domain with Levi-flat boundary, but $\Omega$ is not Stein because it contains compact Riemann surfaces in its interior.  Moreover, when $n\geq 3$, there does not exist a K\"ahler metric on any neighborhood of $\overline{\Omega}$.  We set $\psi\equiv 0$, so that \eqref{eq:DF_psh} holds trivially when $\eta=0$.  Since $\rho$ is plurisubharmonic, \eqref{eq:DF_psh} holds for all $0\leq\eta\leq 1$, and hence Theorem \ref{thm:DF_psh} implies that the Bergman projection is continuous in $W^s(\Omega)$ for all $0\leq s<\frac{1}{2}$.

Note that any holomorphic function on $\Omega$ is necessarily constant on each copy of $\mathbb{H}^{n-1}$ in $\Omega$, so the Bergman space of $\Omega$ is naturally isomorphic to the Bergman space for $D$.  This suggests that we can also prove regularity of the Bergman projection on $\Omega$ using known properties of the Bergman projection on $D$.  In fact, the Bergman kernel can be computed explicitly using a Schwarz–Christoffel mapping from the unit disc onto $D$.

Large classes of domains with Levi-flat boundaries have been constructed using holomorphic bundles over a complex manifold (see \cite{DiOh07} or \cite{DeFo20} and the references therein).  Some of these domains are Stein (see \cite{Ohs82}, for example).  Others admit defining function $\rho$ for which $-(-\rho)^\eta$ is plurisubharmonic for some $0<\eta<1$, but not strictly plurisubharmonic (see \cite{FuSh18}, for example).  In the example considered in \cite{FuSh18}, the natural metric is locally a product of Poincar\'{e} metrics, which has negative definite Ricci curvature on $M$, and hence \eqref{eq:DF_psh} cannot hold on $\Omega$ for any $0<\eta<1$.  See \cite{Ada21} for a detailed analysis of weighted Bergman spaces on such domains.  It would be of great interest to find a domain with Levi-flat boundary that is not a product domain on which the hypotheses of Theorems \ref{thm:DF_psh} or \ref{thm:DF_psh_line_bundle} hold.
% ----------------------------------------------------------------
\bibliographystyle{amsplain}
\bibliography{harrington}

\providecommand{\bysame}{\leavevmode\hbox to3em{\hrulefill}\thinspace}
\providecommand{\MR}{\relax\ifhmode\unskip\space\fi MR }
% \MRhref is called by the amsart/book/proc definition of \MR.
\providecommand{\MRhref}[2]{%
  \href{http://www.ams.org/mathscinet-getitem?mr=#1}{#2}
}
\providecommand{\href}[2]{#2}
\begin{thebibliography}{10}

\bibitem{Ada21}
M.~Adachi, \emph{On weighted {B}ergman spaces of a domain with {L}evi-flat
  boundary}, Trans. Amer. Math. Soc. \textbf{374} (2021), 7499--7524.

\bibitem{Bar92}
D.~E. Barrett, \emph{Behavior of the {B}ergman projection on the
  {D}iederich-{F}orn\ae ss worm}, Acta Math. \textbf{168} (1992), 1--10.

\bibitem{BeCh00}
B.~Berndtsson and P.~Charpentier, \emph{A {S}obolev mapping property of the
  {B}ergman kernel}, Math. Z. \textbf{235} (2000), 1--10.

\bibitem{BoSt90}
H.~P. Boas and E.~J. Straube, \emph{Equivalence of regularity for the {B}ergman
  projection and the {$\overline \partial$}-{N}eumann operator}, Manuscripta
  Math. \textbf{67} (1990), 25--33.

\bibitem{BoSt91}
\bysame, \emph{Sobolev estimates for the {$\overline\partial$}-{N}eumann
  operator on domains in {${\bf C}\sp n$} admitting a defining function that is
  plurisubharmonic on the boundary}, Math. Z. \textbf{206} (1991), 81--88.

\bibitem{CSW04}
J.~Cao, M.-C. Shaw, and L.~Wang, \emph{Estimates for the
  {$\overline\partial$}-{N}eumann problem and nonexistence of {$C\sp 2$}
  {L}evi-flat hypersurfaces in {$\mathbb{C}\mathrm{P}\sp n$}}, Math. Z.
  \textbf{248} (2004), 183--221.

\bibitem{ChSh01}
S.-C. Chen and M.-C. Shaw, \emph{Partial differential equations in several
  complex variables}, AMS/IP Studies in Advanced Mathematics, vol.~19, American
  Mathematical Society, Providence, RI, 2001.

\bibitem{Dem86}
J.-P. Demailly, \emph{Sur l'identit\'e{} de {B}ochner-{K}odaira-{N}akano en
  g\'eom\'etrie hermitienne}, S\'e\-min\-aire d\/'analyse {P}. {L}elong-{P}.
  {D}olbeault-{H}. {S}koda, ann\'ees 1983/1984, Lecture Notes in Math., vol.
  1198, Springer, Berlin, 1986, pp.~88--97.

\bibitem{Dem12}
\bysame, \emph{Complex analytic and differential geometry}, 2012.

\bibitem{DeFo20}
F.~Deng and J.~E. Forn{\ae}ss, \emph{Flat bundles over some compact complex
  manifolds}, J. Geom. Anal. \textbf{30} (2020), 3484--3497.

\bibitem{Det81}
J.~D{\'e}traz, \emph{Classes de {B}ergman de fonctions harmoniques}, Bull. Soc.
  Math. France \textbf{109} (1981), 259--268.

\bibitem{DiFo77a}
K.~Diederich and J.~E. Fornaess, \emph{Pseudoconvex domains: an example with
  nontrivial {N}ebenh\"ulle}, Math. Ann. \textbf{225} (1977), 275--292.

\bibitem{DiFo77b}
\bysame, \emph{Pseudoconvex domains: bounded strictly plurisubharmonic
  exhaustion functions}, Invent. Math. \textbf{39} (1977), 129--141.

\bibitem{DiOh07}
K.~Diederich and T.~Ohsawa, \emph{On the displacement rigidity of {L}evi flat
  hypersurfaces---the case of boundaries of disc bundles over compact {R}iemann
  surfaces}, Publ. Res. Inst. Math. Sci. \textbf{43} (2007), 171--180.

\bibitem{HeFo07}
J.~E. Fornaess and A.-K. Herbig, \emph{A note on plurisubharmonic defining
  functions in {$\mathbb C^2$}}, Math. Z. \textbf{257} (2007), 769--781.

\bibitem{FuSh18}
S.~Fu and M.-C. Shaw, \emph{Bounded plurisubharmonic exhaustion functions and
  {L}evi-flat hypersurfaces}, Acta Math. Sin. (Engl. Ser.) \textbf{34} (2018),
  1269--1277.

\bibitem{GGV22}
A.-K. Gallagher, P.~Gupta, and L.~Vivas, \emph{On the dimension of {B}ergman
  spaces on {$\mathbb {P}^1$}}, Matematica \textbf{1} (2022), 666--684.

\bibitem{Gri66}
P.~A. Griffiths, \emph{The extension problem in complex analysis. {II}.
  {E}mbeddings with positive normal bundle}, Amer. J. Math. \textbf{88} (1966),
  366--446.

\bibitem{Gri85}
P.~Grisvard, \emph{Elliptic problems in nonsmooth domains}, Monographs and
  Studies in Mathematics, vol.~24, Pitman (Advanced Publishing Program),
  Boston, MA, 1985.

\bibitem{Har08a}
P.~S. Harrington, \emph{The order of plurisubharmonicity on pseudoconvex
  domains with {L}ipschitz boundaries}, Math. Res. Lett. \textbf{15} (2008),
  485--490.

\bibitem{Har11}
\bysame, \emph{Global regularity for the {$\overline\partial$}-{N}eumann
  operator and bounded plurisubharmonic exhaustion functions}, Adv. Math.
  \textbf{228} (2011), 2522--2551.

\bibitem{JeKe95}
D.~Jerison and C.~E. Kenig, \emph{The inhomogeneous {D}irichlet problem in
  {L}ipschitz domains}, J. Funct. Anal. \textbf{130} (1995), 161--219.

\bibitem{Koh63}
J.~J. Kohn, \emph{Harmonic integrals on strongly pseudo-convex manifolds. {I}},
  Ann. of Math. (2) \textbf{78} (1963), 112--148.

\bibitem{Koh64}
\bysame, \emph{Harmonic integrals on strongly pseudo-convex manifolds. {II}},
  Ann. of Math. (2) \textbf{79} (1964), 450--472.

\bibitem{Koh73}
\bysame, \emph{Global regularity for {$\bar \partial $} on weakly pseudo-convex
  manifolds}, Trans. Amer. Math. Soc. \textbf{181} (1973), 273--292.

\bibitem{Koh99}
\bysame, \emph{Quantitative estimates for global regularity}, Analysis and
  geometry in several complex variables (Katata, 1997), Trends Math.,
  Birkh\"auser Boston, Boston, MA, 1999, pp.~97--128.

\bibitem{Liu22}
B.~Liu, \emph{The {D}iederich-{F}orn\ae ss index and the regularities on the
  {$\overline\partial$}-{N}eumann problem}, Indiana Univ. Math. J. \textbf{71}
  (2022), 1371--1395.

\bibitem{LiSt22}
B.~Liu and E.~Straube, \emph{{Diederich-Forn{\ae}ss} index and global
  regularity in the $\overline\partial$-{Neumann} problem: domains with
  comparable {Levi} eigenvalues}, arXiv:2207.14197, 2022.

\bibitem{McVa15}
J.~D. McNeal and D.~Varolin, \emph{{$L^2$} estimates for the
  {$\overline\partial$} operator}, Bull. Math. Sci. \textbf{5} (2015),
  179--249.

\bibitem{Ohs82}
T.~Ohsawa, \emph{A {S}tein domain with smooth boundary which has a product
  structure}, Publ. Res. Inst. Math. Sci. \textbf{18} (1982), 1185--1186.

\bibitem{PiZa14}
S.~Pinton and G.~Zampieri, \emph{The {D}iederich-{F}ornaess index and the
  global regularity of the {$\bar\partial$}-{N}eumann problem}, Math. Z.
  \textbf{276} (2014), 93--113.

\bibitem{Ric68}
R.~Richberg, \emph{Stetige streng pseudokonvexe {F}unktionen}, Math. Ann.
  \textbf{175} (1968), 257--286.

\bibitem{Str10}
E.~J. Straube, \emph{Lectures on the {$L^2$}-{S}obolev theory of the
  {$\overline{\partial}$}-{N}eumann problem}, ESI Lectures in Mathematics and
  Physics, European Mathematical Society (EMS), Z\"urich, 2010.

\bibitem{Szo22}
R.~Sz\H{o}ke, \emph{On a theorem of {W}iegerinck}, Anal. Math. \textbf{48}
  (2022), 581--587.

\end{thebibliography}
\end{document}